\title{Proper Actions and Representation Theory}
\author{Toshiyuki Kobayashi}
\numberwithin{equation}{section} %
\theoremstyle{plain} %
  \newtheorem{theorem}{Theorem}[section] %
  \newtheorem{proposition}[theorem]{Proposition}%
  \newtheorem{lemma}[theorem]{Lemma}%
  \newtheorem{corollary}[theorem]{Corollary}%
  \newtheorem{claim}[theorem]{Claim}
  \newtheorem{conjecture}[theorem]{Conjecture}
\theoremstyle{definition} %
  \newtheorem{definition}[theorem]{Definition}%
  \newtheorem{example}[theorem]{Example}%
  \newtheorem{problem}[theorem]{Problem}
  \newtheorem{def-prop}[theorem]{Definition-Proposition}
  \newtheorem{definition-lemma}[theorem]{Definition--Lemma}
  \newtheorem{question}[theorem]{Question}
\theoremstyle{remark} %
  \newtheorem{remark}[theorem]{Remark}%
\newenvironment{theorembis}[1]
  {%
   \addtocounter{theorem}{-1}%
   \begin{theorem}}
  {\end{theorem}}
\newcommand{\RED}[1]{\textcolor{red}{#1}}
\newcommand{\BLUE}[1]{\textcolor{blue}{#1}}
\begin{document}

\begin{abstract}
This exposition presents recent developments on proper actions,
 highlighting their connections to representation theory. It begins with geometric aspects,
 including criteria for the properness of homogeneous spaces in the setting of reductive groups.  
We then explore the interplay between the properness of group actions and the discrete decomposability of unitary representations realized on function spaces. 
Furthermore, 
two contrasting new approaches to quantifying proper actions are examined:
 one based on the notion of sharpness, which measures how strongly a given action satisfies properness;
 and another based on dynamical volume estimates,
 which measure deviations from properness.  
The latter quantitative estimates have proven especially fruitful in establishing temperedness criterion for regular unitary representations on $G$-spaces. 
Throughout, 
 key concepts are illustrated with concrete geometric and representation-theoretic examples.
\end{abstract}

\maketitle

\setcounter{tocdepth}{1}
\tableofcontents

\section{Introduction}
The actions of non-compact groups
 on manifolds
 can exhibit highly non-trivial
 and \lq\lq{wild behavior}\rq\rq.  
The notion of proper actions, 
 introduced by Palais \cite{P61}, 
abstracts and formalises the favorable features
 characteristic of actions of compact groups.  
A prototypical example
 of a proper action is the action of the fundamental group $\Gamma$
 of a manifold 
 on its universal covering space
 via deck transformations.

On the other hand, 
 when $X$ is a Riemannian manifold
 on which a discrete group acts freely and by isometries, 
 the action is automatically properly discontinuous
 (Proposition \ref{prop:isomR}).  
The quotient space
 $X_{\Gamma}=\Gamma \backslash X$ 
 inherits a natural Riemannian structure from $X$
 via the covering $X \to \Gamma \backslash X$, 
 thereby becoming a Riemannian manifold.  
In this setting, 
 one may regard $\Gamma$ as governing the global structure
 of the quotient manifold $\Gamma \backslash X$, 
 while
 the original manifold $X$ determines its local structure.

However, 
 in more general settings---such as when the Riemannian structure
 is replaced with a pseudo-Riemannian one
 (allowing indefinite metric)---the situation is significantly different:
 free actions
 by discrete groups of isometries often fail to be properly discontinuous
 ({\it{e.g.}}, Example \ref{ex:Lorentz_isometry}).

In the study of local-to-global phenomena
 beyond the Riemannian setting, 
 understanding proper actions
 (or properly discontinuous actions)
 is therefore crucial.

In this paper, 
 we examine recent progress
 concerning proper actions from both geometric and representation-theoretic perspectives.

\vskip 2pc
The exposition begins with the topological and geometric framework 
 related to group actions
 by using binary relations
 $\pitchfork$ and $\sim$ on the power set of $G$.  
Sections \ref{sec:discgp} and \ref{sec:proper} present 
 criteria for the properness of group actions
 on homogeneous spaces.  
Topics include Lipsman's conjecture for nilmanifolds
 (Section \ref{subsec:Lipsman})
 and the properness criterion
 (Theorem \ref{thm:proper96})
 in the reductive case, 
 and several subtle examples
 that illustrate these results.

In Section \ref{sec:cocompact}, 
 we give a brief overview 
 of recent developments
 concerning cocompact discontinuous groups
 for reductive homogeneous spaces $G/H$.  
Inspired by Mackey's philosophy---originally developed
 for unitary representations---we also mention a topological analogue
 involving the {\emph{tangential homogeneous space}}
 $G_{\theta}/H_{\theta}$, 
 which arises from their associated Cartan motion groups.

The conceptual link between {\emph{properness}} in topological group actions
 and {\emph{discrete decomposability}}
 in unitary representation theory---traditionally seen as unrelated domains 
 that have been developed through different methods 
 and perspectives---was first proposed in  \cite[Sect.\ 3]{K-Hayashibara97}.  
This work introduced the previously unexplored idea 
 that non-compact subgroups may exhibit
 {\emph{compact-like behaviour}}.  
Subsequent developments, 
 particularly those involving spectral analysis
 on locally pseudo-Riemannian symmetric spaces
 $\Gamma \backslash G/H$
 ({\it{e.g.}}, \cite{KaK25}), 
 have further deepened this perspective.  
In Section \ref{sec:proper_unitary}, 
 we investigate the interplay
 between the properness of group actions and the discrete decomposability 
 of unitary representations realized on function spaces.

In Section \ref{sec:quantify}, 
 we discuss two contrasting approaches to quantifying the properness
 of group actions.  
This first based on the notion of {\emph{sharpness}}
 (\cite{KasK16})
 measures
 how strongly a given action 
 satisfies
 the properness condition.  
The second takes a dynamical perspective 
 using volume estimates to assess how far the action deviates from being proper.  
This latter approach has emerged 
 as a key idea 
 in establishing temperedness criteria 
 for regular representations on $G$-spaces
 in recent work
 \cite{BeKoI, BeKoIII, BeKoII, BeKoIV}.  
Through this discussion, 
 we illustrate how geometric insights
 can inform analytic aspects of representation theory.

\vskip 1pc
\par\noindent
{\bf{Acknowledgement.}}
\newline
The author is grateful to the anonymous referee
 for a careful reading of the manuscript
 and providing valuable comments.  
This article is based 
 on the series of lectures
 delivered by the author
 as part of IMS Distinguished Visitor Lecture Series, 
 held during the Satellite Conference of the Virtual ICM 2022, 
 at the National University of Singapore,
{}from July 1 to 15, 2022.  
The author would like to express his gratitude
 to Hung Yean Loke, Angela Pasquale, 
 Tomasz Przebinda, Binyong Sun, 
 and Chengbo Zhu, 
 for their warm hospitality.

The author was partially supported by the JSPS under Grant-in Aid for Scientific Research (A) (JP23H00084), 
 Institut Henri Poincar{\'e} (Paris), 
 and by the Institut des Hautes {\'E}tudes Scientifiques (Bures-sur-Yvette).  

\section{Local to Global in Geometry}
\label{sec:global}
\subsection{Local to Global in Riemannian Geometry}
~~~\newline
We consider the interplay 
 between local and global geometric properties.  
\vskip 1pc
$\bullet$\enspace
{\emph{Local properties}} include curvature, 
 (T$_1$) topology, 
 locally homogeneous structure.  

$\bullet$\enspace
{\emph{Global properties}} include compactness, Hausdorffness, characteristic classes, 
diameter, 
 and the fundamental group.

\vskip 1pc
The study of how local geometric properties
 affect global structure
 has been one of the central themes
 of differential geometry over several decades,
 with particularly significant progress
 in the Riemannian setting.  
In contrast,
 relatively little is known 
 about global properties 
 in non-Riemannian geometry---arising, for example, from the space time model
 of relativity theory---or more generally in manifolds
 with indefinite metrics of arbitrary signature
 (see \cite{G25} and references therein).  
For instance, 
 the space form problem \cite[Sect.\ 2]{K01} is a long-standing problem
 in non-Riemannian geometry, 
 which includes the existence problem
 of a compact manifold $M$
 with constant sectional curvature
 for a given indefinite-metric of signature $(p,q)$, 
 see Conjecture~\ref{conj:G4} below.

We begin with a classical example of a local-to-global theorem
 in {\emph{Riemannian geometry}}.

\vskip 1pc
\begin{example}
[Bonnet--Myers]
\label{ex:Myers}
Let $(M, g)$ be an $n$-dimensional complete Riemannian manifold 
whose Ricci curvature satisfies $\operatorname{Ric}(g) \ge (n-1)C$
 for some positive constant $C$.  
Then $M$ is compact
 and its diameter is at most $\frac{\pi}{\sqrt{C}}$.  
\end{example}

This theorem tells us
 {\emph{global}} properties such as compactness 
 and the diameter are constrained
 by {\emph{local}} information---specifically
 such as the positivity of curvature---in Riemannian geometry.  

\vskip 2pc
What can be said about the local-to-global phenomena
 {\emph{beyond}} the traditional Riemannian setting?

\medskip 
\subsection{Preliminaries : Pseudo-Riemannian Manifolds}
~~~\newline

We review briefly some basic notions from pseudo-Riemannian geometry.  

\begin{definition}
\label{def:pseudo}
A {\emph{pseudo-Riemannian manifold}} $(M,g)$ is a smooth manifold
 equipped with a non-degenerate symmetric bilinear form
\[
g_x \colon T_x M \times T_x M \to {\mathbb{R}}
\quad
(x \in M)
\]
that depends smoothly on $x \in M$.  
\end{definition}

Let $(p,q)$ be the signature of $g_x$, 
 a non-degenerate symmetric bilinear form 
 on a $(p+q)$-dimensional manifold $M$.  
By Sylvester's law of inertia, 
 the signature is locally constant.  
We say that $(M,g)$ is
 a {\emph{Riemannian manifold}} if $q=0$, 
and is a {\emph{Lorentzian manifold}} if $q=1$.

Just as in the Riemannian case, 
 pseudo-Riemannian manifolds $(M, g)$ also admit natural definitions
 of the Levi-Civita connection, geodesics, and curvature.  
For example, 
the curvature tensor $R$ and the sectional curvature $\kappa$
 for $X, Y \in T_x M$ are given by 
\begin{align*}
  R(X,Y):=&[\Delta_X, \Delta_Y]-\Delta_{[X,Y]}, 
\\
\kappa(X,Y):=&\frac{g_x(R(X,Y)Y,X)}{g_x(X,X) g_x(Y,Y)-g_x(X,Y)^2}.  
\end{align*}

\begin{example}
\label{ex:Rpq_surface}
(1)\enspace(Flat case)\enspace
We equip ${\mathbb{R}}^{p+q}$ 
 with the pseudo-Riemannian structure
\[
   dx_1^2 + \cdots +  dx_p^2 - dx_{p+1}^2 - \cdots -dx_{p+q}^2
\]
which has the signature $(p,q)$, 
 and denote the resulting space by ${\mathbb{R}}^{p, q}$.

It is a flat space;
 that is, 
 the curvature tensor satisfies $R \equiv 0$.  
In the case where $q=1$,
 ${\mathbb{R}}^{p,q}$ is a Lorentzian manifold 
 known as the {\emph{Minkowski space}}.  
\newline
(2)\enspace(Pseudo-Riemannian space forms)\enspace
The flat pseudo-Riemannian structure on ${\mathbb{R}}^{p,q}$ remains 
 non-degenerate when restricted to the hypersurfaces
\begin{align*}
X(p-1,q)_+=&\{x \in {\mathbb{R}}^{p+q}: x_1^2 +\cdots + x_p^2-x_{p+1}^2 - \cdots - x_{p+q}^2=1\}, 
\\
X(p,q-1)_-=
&\{x \in {\mathbb{R}}^{p+q}: x_1^2 +\cdots + x_p^2-x_{p+1}^2 - \cdots - x_{p+q}^2=-1\}.  
\end{align*}
These give rise to pseudo-Riemannian manifolds
 of signature $(p-1,q)$
 with constant sectional curvature $\kappa \equiv 1$, 
 and of signature $(p,q-1)$
 with $\kappa \equiv-1$, respectively.  
\newline
(3)\enspace(Hyperbolic space: ${\mathbb{H}}^n=X(n,0)_-$)\enspace
The hypersurface
\[
{\mathbb{H}}^n
:=\{
(x_1, \cdots, x_{n+1})
:
x_1^2+\cdots+x_n^2-x_{n+1}^2=-1
\}
\]
inherits a Riemannian structure from the ambient Minkowski space ${\mathbb{R}}^{n,1}$, 
 and has a constant sectional curvature $\kappa \equiv -1$.  
\newline
(4)\enspace
(De Sitter space: $\operatorname{d S}^n=X(n-1,1)_+$)\enspace
The hypersurface of ${\mathbb{R}}^{n,1}$, 
\[
\operatorname{dS}^n:=\{
(x_1, \cdots, x_{n+1})
:
x_1^2+\cdots+x_n^2-x_{n+1}^2=1
\}
\]
 inherits a Lorentzian metric from the ambient Minkowski space ${\mathbb{R}}^{n,1}$, 
 and has a constant sectional curvature $\kappa \equiv 1$.  
More generally, 
 a complete Lorentzian manifold
 of constant positive sectional curvature 
 is called 
 a {\emph{de Sitter manifold}}, 
 or a {\emph{relativistic spherical space}}, 
 as it serves as a Lorentzian analog
 of sphere geometry.  
\newline
(5)\enspace
(Anti-de Sitter space: $\operatorname{AdS}^n=X(n-1,1)_-$)\enspace
This is a special case of the preceding example.  
The hypersurface
\[
\operatorname{A d S}^n:=\{
(x_1, \cdots, x_{n+1})
:
x_1^2+\cdots+x_{n-1}^2-x_n^2-x_{n+1}^2=-1
\}
\] 
 inherits a Lorentzian metric from ${\mathbb{R}}^{n-1,2}$
 and has constant sectional curvature 
 $\kappa \equiv -1$.  
It is regarded as a Lorentzian analog of the hyperbolic space.  
More generally, 
 a complete Lorentzian manifold
 with constant sectional curvature $\kappa \equiv -1$
 is called an {\emph{anti-de Sitter manifold}}.  
\end{example}

\begin{remark}
\label{rem:Rqp}
In Example~\ref{ex:Rpq_surface} (2), 
 changing the signature of the flat pseudo-Riemannian structure
 of the ambient space ${\mathbb{R}}^{p+q}$ causes
 the signatures
 of the induced pseudo-Riemannian metrics
 on the hypersurfaces
 $X(p-1,q)_+$ and $X(p,q-1)_-$
 to change from $(p-1,q)$ to $(q, p-1)$, 
 and from $(p,q-1)$ to $(q-1,p)$, respectively.  
Furthermore, 
the sectional curvature is reversed in sign.  
\end{remark}


\subsection{The Calabi--Markus Phenomenon}
~~~\newline
In contrast to Riemannian geometry, 
 as illustrated by the Bonnet--Myers theorem
 (Example~\ref{ex:Myers})
 the global geometry 
 of pseudo-Riemannian manifolds exhibits markedly
 different behavior:

\begin{theorem}
[Calabi--Markus \cite{CM}]
\label{thm:CM}
Every relativistic spherical space ({\it{i.e.,}} a de Sitter manifold) is non-compact.  
Furthermore, 
 if the dimension is greater than two, 
 its fundamental group is finite.  
\end{theorem}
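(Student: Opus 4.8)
The plan is to reduce to the space-form description and then isolate the rank-one mechanism that forces finiteness. First I would invoke the classification of pseudo-Riemannian space forms: a complete Lorentzian manifold $M$ of constant sectional curvature $+1$ and dimension $n$ is isometric to $\Gamma\backslash\operatorname{dS}^n$, where $\operatorname{dS}^n=X(n-1,1)_+\cong O(n,1)/O(n-1,1)$ is the model from Example~\ref{ex:Rpq_surface}(4) and $\Gamma\subset O(n,1)$ is a discrete subgroup acting freely and properly discontinuously. For $n\ge 3$ the model $\operatorname{dS}^n\cong\mathbb{R}\times S^{n-1}$ is simply connected, so it is the universal cover and $\pi_1(M)\cong\Gamma$. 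Thus both assertions are governed by a single statement about subgroups of $O(n,1)$.

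The heart of the matter is the claim that every discrete subgroup $\Gamma\subset O(n,1)$ acting properly discontinuously on $\operatorname{dS}^n$ is finite. The decisive geometric input is that the non-compact direction of $G=O(n,1)$ already sits inside the isotropy subgroup $H=O(n-1,1)$, which is exactly the equality $\operatorname{rank}_{\mathbb{R}}O(n,1)=\operatorname{rank}_{\mathbb{R}}O(n-1,1)=1$. Concretely, I would write the Cartan decomposition $O(n,1)=K\,\overline{A^{+}}\,K$ with $K=O(n)\times O(1)$ maximal compact and $A=\{a_t\}$ the one-parameter group of Lorentz boosts in the $(x_n,x_{n+1})$-plane that fix the coordinates $x_1,\dots,x_{n-1}$. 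Every $a_t$ then fixes, pointwise, the spacelike sphere
\[
F=\{(x_1,\dots,x_{n-1},0,0): x_1^2+\dots+x_{n-1}^2=1\}\cong S^{n-2}\subset\operatorname{dS}^n,
\]
which is nonempty for $n\ge 2$.

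Granting this, I would argue by contradiction. If $\Gamma$ were infinite it would be unbounded in $O(n,1)$, so there exist distinct $\gamma_j=k_j a_{t_j} l_j$ with $t_j\to\infty$; passing to a subsequence, $k_j\to k$ and $l_j\to l$ in the compact group $K$. Fixing a point $q\in F$ and setting $x_j:=l_j^{-1}q$, one computes $\gamma_j x_j=k_j a_{t_j} q=k_j q$, because $q$ is fixed by every $a_{t_j}$; hence both $x_j\to l^{-1}q$ and $\gamma_j x_j\to kq$ converge. Taking $K_0$ to be the compact union of these two convergent sequences together with their limits, one obtains $\gamma_j K_0\cap K_0\ne\varnothing$ for infinitely many distinct $\gamma_j$, contradicting proper discontinuity. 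Therefore $\Gamma$ is discrete and relatively compact, hence finite. For $n\ge 3$ this yields $\pi_1(M)\cong\Gamma$ finite, and since $\operatorname{dS}^n\to M$ is then a finite covering of the non-compact space $\operatorname{dS}^n\cong\mathbb{R}\times S^{n-1}$, the quotient $M$ is itself non-compact; the non-compactness in the remaining low dimensions follows from the same cylinder model together with completeness.

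I expect the main obstacle to be the honest verification that the return points stay in one fixed compact set. A naive choice of a point merely close to $F$ fails, since $a_{t_j}$ expands the transverse directions by $e^{t_j}\to\infty$; the argument works only because $q$ is taken exactly on the fixed sphere $F$, so that $a_{t_j}$ acts trivially and all $t_j$-dependence disappears. The second delicate point is the dimension hypothesis: for $n=2$ the model $\operatorname{dS}^2\cong\mathbb{R}\times S^1$ is not simply connected, so $\pi_1$ must be computed on the universal cover $\widetilde{\operatorname{dS}^2}$, and the relevant deck group lives in $\widetilde{O(2,1)}$ rather than in $O(2,1)$. This is precisely why the finiteness of $\pi_1$ can, and does, fail in dimension two, even though non-compactness persists.
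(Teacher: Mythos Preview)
Your argument is correct and is essentially the original Calabi--Markus mechanism. The paper does not prove Theorem~\ref{thm:CM} directly where it is stated; instead it recovers it later as the special case $(G,H)=(O(n,1),O(n-1,1))$ of the general criterion in Corollary~\ref{cor:CMcri}, formulated through the binary relations $\pitchfork$ and $\sim$. There the key step is that $\operatorname{rank}_{\mathbb{R}}G=\operatorname{rank}_{\mathbb{R}}H$ forces $G\sim H$ (since the Cartan decomposition gives $G=KAK$ with $A$ conjugate into $H$), whence any discrete $\Gamma$ with $\Gamma\pitchfork H$ also satisfies $\Gamma\pitchfork G$ and is therefore finite. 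Your fixed-sphere construction is exactly the geometric unpacking of the relation $G\sim H$: the inclusion $A\subset H$ appears as the existence of the $A$-fixed locus $F\subset G/H$, which you exploit by hand to build the compact witness $K_0$. The paper's route buys uniformity---the same one-line argument handles every reductive pair of equal real rank---at the price of setting up the $\pitchfork/\sim$ formalism and the properness criterion (Theorem~\ref{thm:proper96}); your route is self-contained and makes the rank-one geometry visible. One minor gap: your handling of non-compactness for $n=2$ is thin, since there the relevant deck group lives in the universal cover of $O(2,1)$ and need not be finite; a short separate argument (e.g.\ that the Lorentzian holonomy obstructs a compact quotient of $\widetilde{\operatorname{dS}^2}\cong\mathbb{R}^2$) is still needed.
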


\section{Basic Problems on Discontinuous Groups for $G/H$}
\label{sec:discgp}

When the {\emph{homogeneous structure}}
 is regarded as a local property, 
 {\emph{discontinuous groups}}
 (Definition~\ref{def:G1})
 govern the global geometry.  
The study of discontinuous groups
 beyond the Riemannian setting
 is a relatively young and rapidly evolving field
 in group theory
 interacting with topology, 
 differential geometry, 
 representation theory, 
 ergodic theory, 
 and number theory, as well as other areas of mathematics.  
An early exposition of this subject
 can be found in the lecture notes \cite{K97}, 
 and a more recent account is provided, 
 for instance, in \cite{G25}.

This theme was also highlighted as a new direction 
 for future research
 looking ahead to the 21st century
 on the occasion 
 of the World Mathematical Year 2000
 by Margulis \cite{Mr}
 and the author \cite{K01}
 with both works including collection of open problems.  
Over the past thirty years, 
 there have been remarkable developments
 employing a variety of methods.  
Nevertheless, 
 several fundamental problems remain unsolved
 (\cite{KConj23}).

In this section, 
 we lay the groundwork for these problems, 
 which will be formulated
 more explicitly 
 in Sections~\ref{sec:proper}---{\ref{sec:cocompact},
 by illustrating the basic ideas
 through simple examples.

\subsection{Discontinuous Groups for Acting on Manifolds $X$}
~~~\par
Beyond the Riemannian context, 
 it is crucial to clearly distinguish
 between {\emph{discrete subgroups}} and {\emph{discontinuous groups}}.

In many cases, 
 a discontinuous group $\Gamma$ is realized
 as a subgroup of $G$ acting on a manifold.  
Accordingly, 
 we shall define discontinuous groups
 within this framework.  
Nevertheless, in contexts
 where $G$ plays no essential role, 
 we may omit the ambient group $G$
 and simply take $G=\Gamma$.  

\begin{definition}
[Discontinuous Group]
\label{def:G1}
Let $G$ be a Lie group acting
 on a manifold $X$.  
A discrete subgroup $\Gamma$ of $G$ 
 is called a {\emph{discontinuous group}} for $X$
 if $\Gamma$ acts properly discontinuously and freely on $X$.  
See Definition \ref{def:ppf} below.  
\end{definition}

The quotient space $X_{\Gamma}:=\Gamma \backslash X$, 
 by a discontinuous group $\Gamma$, 
 is a (Hausdorff) manifold.  
Moreover, 
 any $G$-invariant local geometric structure on $X$
 descends to $X_{\Gamma}$
 via the covering map $X \to X_{\Gamma}$
 (see Proposition~\ref{prop:Deck}).

Such quotients $X_{\Gamma}$ are examples of complete $(G,X)$-{\it{manifolds}}
 in the sense of Ehresmann and Thurston.


\subsection{Basic Notion $\cdots$ Proper Action}
~~~
\newline
We extend Theorem~\ref{thm:CM}
 to a broader setting
 formulated in the language of groups.  
To this end, 
 we briefly review some basic notions
 in the theory of transformation groups.

Let $L$ be a locally compact group, 
 and $X$ a locally compact topological space.  
Suppose that $L$ acts {\emph{continuously}} on $X$, 
 {\it{i.e.}}, the action map
\[
   L \times X \to X, \quad
  (g,x) \mapsto g x
\]
 is continuous.

For a subset $S \subset X$, 
 we define a subset $L_S \subset L$ by 

\begin{equation*}
L_S:= L_{S \to S}=\{\gamma \in L: \boldsymbol\gamma S \cap S \ne \emptyset\}.
\end{equation*}

If $S$ is a singleton $\{x\}$, 
 then $L_{\{x\}}$ coincides with the stabilizer group $L_x$ of the point $x \in X$.  
In general, 
 $L_S$ is merely a subset of $L$.

\begin{figure}[H]
\centering
\raisebox{30pt}{\includegraphics[width=30mm]{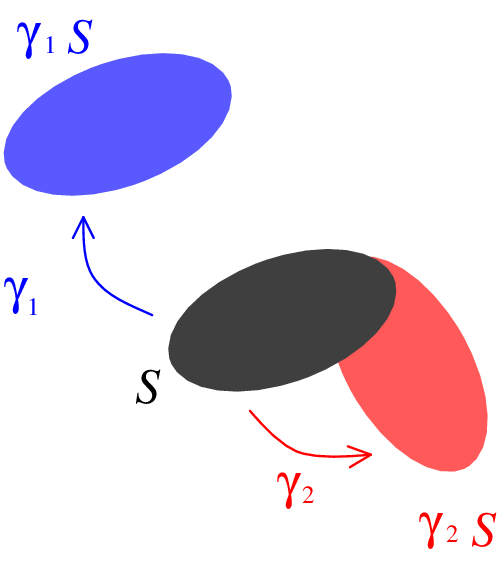}}
\caption{$\BLUE{{\boldsymbol\gamma}_1} \not \in L_S \ni \RED{{\boldsymbol\gamma}_2}$}
\end{figure}

Continuous actions possessing with the properties:
\[\text{
$L_S$ is \lq\lq{small}\rq\rq\
 whenever $S$ is \lq\lq{small}\rq\rq
}
\]
are precisely formulated 
 and given the following names.

\begin{definition}
\label{def:ppf}
An action of $L$ on $X$ is called
\begin{alignat*}{3}
&\text{{\emph{free}}}
&&\text{if $L_S$ is a singleton}\quad
&&\text{for any singleton $S$;}
\\
&\text{{\emph{properly discontinuous}}}
\quad
&&\text{if $L_S$ is finite}
&&\text{for any compact subset $S$;}
\\
&\text{{\emph{proper}}}
&&\text{if $L_S$ is compact}\quad
&&\text{for any compact subset $S$.} 
\end{alignat*}
\end{definition}


\subsection{Proper Maps and Proper Actions}
~~~\newline
Let $X$ and $Y$ be Hausdorff, locally compact spaces.  
In this section, 
 we take a closer look at some basic properties 
 of proper actions.  

\begin{definition}
\label{def:propermap}
A continuous map $f \colon X \to Y$ 
 is called {\emph{proper}}
 if the preimage $f^{-1}(S)$ of any compact subset $S \subset Y$
 is compact.  
\end{definition}

It is worth noting that any proper map is a closed map
 (see {\it{e.g.}}, \cite[Chap.\ I, Sect.\ 10, Prop.\ 1]{B98}).  
Indeed, 
 let $C$ be a closed subset of $X$, 
 and let $y \not \in f(C)$.  
Choose an open neighbourhood $V$ of $y$
 such that its closure $\overline V$ is compact.  
Then the set $E:=C \cap f^{-1}(\overline V)$ is compact, 
 and hence $f(E)$ is closed.  
It follows that the set $U:=V \setminus f(E)$ is an open neighbourhood of $y$, 
 disjoint from $f(C)$.  
Thus, 
 $f(C)$ is closed.

For subsets $S$ and $T$ of $X$, 
we define
\[
 L_{S \to T}:=\{\gamma \in L: \gamma S \cap T \ne \emptyset \}.  
\]
The proof of the following lemma is straightforward
 and is therefore omitted.  

\begin{definition-lemma}
[Proper Action]
\label{def:properaction}
Let $X$  be a locally compact topological space, 
 on which a locally compact group $G$ acts continuously.  
Then the following four conditions are equivalent:

{\rm{(i)}}\enspace
The action of $L$ on $X$ is proper
  in the sense of Definition \ref{def:propermap}.  

{\rm{(ii)}}\enspace
The map $\varphi \colon L \times X \to X \times X$
 defined by $(g,x) \mapsto (x,gx)$
is proper.  

{\rm{(iii)}}\enspace
For any compact $S,T \subset X$, 
 the set $L_{S \to T}$ is compact.  

{\rm{(iv)}}\enspace
For any compact subset $S \subset X$, 
 the set 
$L_S$ ($\equiv L_{S \to S}$) is compact.  
\end{definition-lemma}

See also Lemma~\ref{lem:doi_250427}
 for alternative characterization of proper actions from the
 perspective of measure theory.


\subsection{Proper $+$ Discrete $=$ Properly Discontinuous}
~~~\newline
When the group $L$ is discrete, 
 the action of $L$ is proper
 if and only if it is properly discontinuous, 
 since a discrete set is compact
 if and only if it is finite.

Furthermore, 
 the stabilizer $L_x$ is finite for every $x \in X$
 in this case.  
Thus, 
 among the three properties listed in Definition~\ref{def:ppf}, 
 understanding of proper actions
 in greater depth is of particular importance.


\subsection{Discontinuous Group and Covering Transformation Group}
Suppose that $X$ is a locally compact, 
Hausdorff space, 
 on which a discrete group $\Gamma$ acts continuously.  

\begin{definition}
[Discontinuous Group for $X$]
\label{def:discont}
A discrete group $\Gamma$ is called a {\emph{discontinuous group for $X$}}
 if the action of $\Gamma$
 on $X$ is properly discontinuous and free.  
\end{definition}

Let $\Gamma \backslash X$ denote the quotient space, 
 {\it{i.e.}}, 
 the set of $\Gamma$-orbits in $X$, 
 equipped with the quotient topology induced
 by the natural projection 
 $q_{\Gamma} \colon X \to \Gamma \backslash X$.  
The following result is a classical fact from general topology
 (see {\it{e.g.}}, \cite[Chap.\ 3, Sect.\ 3.5]{T97}).  

\begin{proposition}
\label{prop:Deck}
If $\Gamma$ is a discontinuous group for a topological manifold, 
 then the quotient space $\Gamma \backslash X$ carries a manifold structure
such that the quotient map $q_{\Gamma} \colon X \to \Gamma \backslash X$ becomes a regular covering.  
Moreover, 
 any $\Gamma$-invariant local geometric structure on $X$ 
 descends to $\Gamma \backslash X$
 via $q_{\Gamma}$.  
\end{proposition}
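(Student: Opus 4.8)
The plan is to construct, around each point of $X$, an \emph{evenly covered} neighbourhood and then assemble the covering and manifold structures from these local models. First I would fix $x \in X$ and use local compactness to choose a compact neighbourhood $K$ of $x$. Since the action is properly discontinuous, the set $\Gamma_K = \{\gamma \in \Gamma : \gamma K \cap K \neq \emptyset\}$ is finite, say $\{\gamma_0 = e, \gamma_1, \dots, \gamma_m\}$. Freeness gives $\gamma_i x \neq x$ for $1 \le i \le m$, and Hausdorffness lets me separate $x$ from each of the finitely many points $\gamma_i x$; intersecting the resulting neighbourhoods with the interior of $K$ yields an open $U \ni x$ with $\gamma_i U \cap U = \emptyset$ for $i \ge 1$. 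Since $U \subset K$ forces $\Gamma_U \subset \Gamma_K$, this gives $\gamma U \cap U = \emptyset$ for every $\gamma \neq e$. This separation argument, which uses all three hypotheses at once, is the technical heart; everything afterwards is bookkeeping.

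Next I would observe that $q_\Gamma$ is open, since $q_\Gamma^{-1}(q_\Gamma(V)) = \bigcup_{\gamma \in \Gamma} \gamma V$ is open whenever $V$ is. For the $U$ above, $q_\Gamma^{-1}(q_\Gamma(U)) = \bigsqcup_{\gamma \in \Gamma} \gamma U$ is a disjoint union of open sets, and $q_\Gamma$ carries each homeomorphically onto the open set $q_\Gamma(U)$ (injectivity on a single sheet again follows from $\gamma U \cap U = \emptyset$ for $\gamma \neq e$). Hence $q_\Gamma(U)$ is evenly covered, so $q_\Gamma$ is a covering map; and since $\Gamma$ permutes each fibre, which is an orbit, transitively, the covering is regular with deck group $\Gamma$.

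For Hausdorffness of $\Gamma \backslash X$ I would invoke that, for a discrete group, properly discontinuous equals proper, so by Definition--Lemma~\ref{def:properaction} the map $\varphi \colon \Gamma \times X \to X \times X$, $(\gamma, x) \mapsto (x, \gamma x)$, is proper and therefore closed. Its image is precisely the graph $R$ of the orbit equivalence relation, which is thus closed in $X \times X$; combined with the openness of $q_\Gamma$, this forces $\Gamma \backslash X$ to be Hausdorff. The manifold structure then comes cheaply: restricting $q_\Gamma$ to each sheet gives local homeomorphisms onto evenly covered sets, and transporting the charts of $X$ through their local inverses produces an atlas on $\Gamma \backslash X$ whose transition maps are compositions of the original transitions with deck transformations $\gamma$, hence of the same regularity class.

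Finally, descent of any $\Gamma$-invariant local geometric structure follows because $q_\Gamma$ is a local homeomorphism: on each evenly covered set the structure is transported through a local inverse, and $\Gamma$-invariance guarantees the result is independent of the chosen sheet, so the local pieces glue to a well-defined structure on $\Gamma \backslash X$ whose pullback is the original one. The only place where real care is needed is the first step; I expect the separation of $x$ from its finitely many \lq\lq{returning}\rq\rq\ translates $\gamma_i x$ to be the main obstacle, as this is exactly where proper discontinuity must be turned into a usable local statement.
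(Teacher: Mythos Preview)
Your argument is correct and follows the standard textbook route to this classical result. Note, however, that the paper does not actually prove Proposition~\ref{prop:Deck}: it is introduced with the sentence ``The following result is a classical fact from general topology'' and stated without proof. So there is no author's argument to compare against; you have simply supplied the omitted details. One small point worth tightening in your write-up: when you separate $x$ from each $\gamma_i x$ by disjoint opens $A_i \ni x$ and $B_i \ni \gamma_i x$, you should take $U_i := A_i \cap \gamma_i^{-1}(B_i)$ (not just $A_i$) before intersecting, so that $\gamma_i U_i \subset B_i$ is genuinely disjoint from $U_i \subset A_i$; your phrasing ``intersecting the resulting neighbourhoods'' slightly obscures this. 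Also, your invocation of Definition--Lemma~\ref{def:properaction} for Hausdorffness is fine, but you could equally cite Proposition~\ref{prop:118}(1), which the paper does prove later and which gives exactly this conclusion.
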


\begin{remark}
The key condition in Definition~\ref{def:discont}
 is that the action is properly discontinuous; 
 freeness is of secondary importance.

There are two main reasons for this.  
First, 
 suppose that $\Gamma$ acts properly discontinuously on $X$.  
Then the singularities of the quotient space $X_{\Gamma}$ are \lq\lq{mild}\rq\rq, 
 in the sense that $X_{\Gamma}$ is locally a finite group quotient
 of Euclidean space, 
 called {\emph{V-manifold}}
 in the sense of Satake \cite{S56}
 or an {\emph{orbifold}} in the sense of Thurston.

Second, 
 if $\Gamma$ is a finitely generated linear group, 
 then there exists a finite-index subgroup $\Gamma'$ 
 $\subset \Gamma$
 such that $\Gamma'$ is torsion-free
 by a theorem of Selberg.  
In particular,
 the $\Gamma'$-action is free
 and properly discontinuous, 
 provided that the $\Gamma$-action is properly discontinuous.

In \cite[Def.\ 2.5]{K97}, 
 we did not require freeness
 in the definition of discontinuous groups, 
 thereby allowing $X_{\Gamma}=\Gamma \backslash X$ to be an orbifold.  
\end{remark}

We provide some typical examples of Proposition~\ref{prop:Deck}.  

\vskip 2pc
\begin{example}
\label{ex:univ_cover_M}
Suppose that $M$ is a pseudo-Riemannian manifold.  
Let $X$ be its universal covering 
equipped with the pull-back pseudo-Riemannian structure
 via the covering map $p \colon X \to M$.  
Let $G=\operatorname{Isom}(X)$ denote the isometry group of $X$, 
 and let $\Gamma$ be the fundamental group of $M$, 
 based on a point $o=p(\widetilde o) \in M$.  
Then $G$ admits the structure of a Lie group
 acting smoothly on $X$.  
Furthermore,
 $\Gamma$ is regarded as a subgroup of the Lie group $G$, 
 and is a discontinuous group for $X$
 with natural isomorphism $X_{\Gamma} \simeq M$.  
\end{example}

As a classical example illustrating Example~\ref{ex:univ_cover_M}, 
 we recall the uniformization
 of a compact Riemann surface $\Sigma_g$.

\begin{example}
[Uniformization Theorem of Klein--Poincar{\'e}--Koebe]
\label{ex:Sg}
~~~\newline
Let $\Sigma_g$ be a compact Riemann surface 
 of genus $g \ge 2$, 
 and let $\Gamma$ denote its fundamental group $\pi_1(\Sigma_g)$, 
 often referred to as a {\emph{surface group}}.  
Then the universal covering space of $\Sigma_g$ is biholomorphic
 to the Poincar{\'e} upper half plane
\[
   {\mathbb{H}}=\{z \in {\mathbb{C}}: \operatorname{Im}z >0\}.  
\]
The group $PSL(2,{\mathbb{R}}) =SL(2,{\mathbb{R}})/\{\pm I_2\}$ acts 
holomorphically
 and transitively on ${\mathbb{H}}$ 
 via linear fractional transformations
 $z \mapsto (cz+d)^{-1}(az+b)$.  
There is a natural diffeomorphism
\[
  {\mathbb{H}} \simeq PSL(2,{\mathbb{R}})/PSO(2)=:G/K.  
\]
The quotient $\Gamma \backslash {\mathbb{H}}\simeq \Gamma \backslash G/K$
 can be naturally identified with the original surface $\Sigma_g$.  
\end{example}

In this example, 
 $G/K=PSL(2,{\mathbb{R}})/PSO(2)$ is a Riemannian symmetric space.  
More generally, 
 the following result, 
 which goes back to {\'E}.\ Cartan, 
 provides a bridge 
 between the geometric and group-theoretic definition 
 of symmetric spaces.

\begin{proposition}
[Affine Locally Symmetric Space]
Any complete affine locally symmetric space
 is of the form $\Gamma\backslash G/H$, 
 where $G$ is a Lie group,
 $H$ is an open subgroup
 of the fixed point subgroup
 of an involution of $G$, 
 and $\Gamma$ is a discrete subgroup 
 acting properly discontinuously and freely on the symmetric space $G/H$.  
\end{proposition}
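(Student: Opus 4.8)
The plan is to pass to the universal covering, show that it is a \emph{globally} symmetric space of the form $G/H$, and then let the deck transformation group supply $\Gamma$.

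First I would let $M$ be a complete affine locally symmetric space, pass to its universal covering $p\colon \widetilde M \to M$, and pull back the affine connection $\nabla$. Then $\widetilde M$ is connected, simply connected, geodesically complete, and affine locally symmetric, and $\Gamma := \pi_1(M)$ acts on $\widetilde M$ by deck transformations. Since $p$ is a local affine isomorphism, each deck transformation preserves $\nabla$, so $\Gamma \subset \operatorname{Aut}(\widetilde M, \nabla)$; being a covering transformation group, $\Gamma$ acts properly discontinuously and freely with $\Gamma\backslash \widetilde M \simeq M$. Thus it suffices to realize $\widetilde M$ as an affine symmetric space $G/H$ with $\Gamma \subset G$.

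The heart of the argument is to upgrade the \emph{local} geodesic symmetries to \emph{global} affine involutions. At each point $x$, the geodesic symmetry $s_x$ reversing geodesics through $x$ is defined on a normal neighbourhood, and the local symmetry condition (equivalently $\nabla R = 0$) guarantees that $s_x$ preserves $\nabla$ there, so it is a \emph{local} affine diffeomorphism fixing $x$ with differential $-\operatorname{id}$ at $x$. I expect the main obstacle to be showing that $s_x$ extends to a globally defined affine diffeomorphism of $\widetilde M$. This is exactly the affine analogue of analytic continuation: using geodesic completeness one develops $s_x$ along every curve issuing from $x$, and simple connectivity removes the monodromy, so the continuations patch into a single global affine map $s_x\colon \widetilde M \to \widetilde M$. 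Because an affine transformation of a connected manifold is determined by its value and its differential at one point, the global $s_x$ is an involution with $x$ as an isolated fixed point.

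With global symmetries in hand, set $G := \operatorname{Aut}(\widetilde M, \nabla)$, which is a Lie group by the structure theory of automorphism groups of affine connections, and fix a base point $o$ with isotropy $H := G_o$. Transitivity of $G$ follows by an open-orbit argument: for $x$ near $o$ the symmetry at the midpoint of the short geodesic from $o$ to $x$ carries $o$ to $x$, so $G\cdot o$ is open, and connectedness of $\widetilde M$ forces $G\cdot o = \widetilde M$; hence $\widetilde M \simeq G/H$. Define the involution $\sigma := \operatorname{Int}(s_o)$ of $G$, which is involutive since $s_o^2 = \operatorname{id}$. The $1$-jet rigidity of affine maps shows that every $h \in H$ satisfies $h s_o h^{-1} = s_o$ (both fix $o$ with differential $-\operatorname{id}$ there), giving $H \subseteq G^\sigma$; conversely the identity component $(G^\sigma)^0$ must fix the isolated fixed point $o$, so $(G^\sigma)^0 \subseteq H$. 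Thus $H$ is an open subgroup of the fixed point group $G^\sigma$ of the involution $\sigma$. Since $\Gamma \subset \operatorname{Aut}(\widetilde M, \nabla) = G$ and $\Gamma\backslash(G/H) = \Gamma\backslash\widetilde M \simeq M$, we conclude that $M$ is of the form $\Gamma\backslash G/H$ with $\Gamma$ acting properly discontinuously and freely, as required.
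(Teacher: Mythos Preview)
The paper does not supply a proof of this proposition; it is stated as a classical result attributed to \'E.~Cartan, serving as background for the discussion of discontinuous groups. Your outline is precisely the standard argument and is correct: pass to the simply connected cover, use completeness and simple connectivity to globalize the local geodesic symmetries, take $G$ to be the affine automorphism group (a Lie group acting transitively by the midpoint argument), and identify $H$ as an open subgroup of $G^{\sigma}$ via the $1$-jet rigidity of affine maps. One small point you leave implicit is that $\Gamma$ is \emph{discrete} in $G$; this follows because the evaluation map $G\to\widetilde M$, $g\mapsto g\cdot o$, is continuous and $\Gamma$ acts properly discontinuously, so any neighbourhood of $e$ in $G$ mapping into a small compact $S\ni o$ meets $\Gamma$ in only finitely many elements.
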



\subsection{Isometric Actions: Riemannian Geometry}
~~~\newline
Let $\operatorname{Isom}(X)$ denote 
 the group of isometries of a Riemannian manifold, 
 or more generally, 
 of a pseudo-Riemannian manifold $X$.  
Then $\operatorname{Isom}(X)$ is a Lie group.

In Example~\ref{ex:Sg}, 
 $G:=\operatorname{Isom}(\Sigma_g) \simeq PSL(2,{\mathbb{R}})$, 
 and $\Gamma$ can be regarded
 as a {\emph{discrete}} subgroup of $G$.

In this subsection, 
 we prove a converse statement, 
 that is, 
 any discrete subgroup $\Gamma$ 
 of $\operatorname{Isom}(X)$ acts properly discontinuously
 on $X$
 if $X$ is a {\emph{Riemannian manifold}}.

For two topological spaces $X$ and $Y$, 
 let $C(X,Y)$ denote the set 
 of all continuous maps from $X$ to $Y$.  
We recall that the {\emph{compact-open topology}} 
 on the set $C(X,Y)$ 
 is a topology defined
 by the subbase
\[
  W(S, V):=\{f \in C(X,Y): f(S) \subset V\}, 
\]
 where $S$ is a compact subset of $X$
 and $V$ is an open subset of $Y$.

The compact-open topology on $C(X,Y)$ is Hausdorff 
 if $Y$ is Hausdorff.

\begin{proposition}
[Isometric Transformations in Metric Spaces]
\label{prop:isomR}
~~~\newline
Suppose that $X$ is a locally compact, 
 separable, 
 complete metric space
 such that $X$ has a Heine--Borel property, 
 that is, 
 every bounded closed set is compact.  
Let $\Gamma$ be a group of isometries of $X$ 
 endowed with compact-open topology.  
Then the following two conditions on $\Gamma$ are equivalent:
\newline
{\rm{(i)}}\enspace
$\Gamma$ is a discrete group.  
\newline
{\rm{(ii)}}\enspace
$\Gamma$ acts properly discontinuously on $X$.    
\end{proposition}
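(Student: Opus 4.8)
The plan is to prove the two implications separately, using the fact that the Heine--Borel hypothesis already forces $X$ to be locally compact and $\sigma$-compact (indeed $X=\bigcup_{n\ge 1}\overline{B(x_0,n)}$ is an increasing union of compact balls for any fixed $x_0\in X$). Since every $\gamma\in\Gamma$ is $1$-Lipschitz, the family $\Gamma$ is equicontinuous, and on such families the compact-open topology coincides with the topology of uniform convergence on compact subsets; I will use this identification throughout. I also record the elementary neighbourhood description: for a point $x_0$ and radius $r>0$, the set $W(\{x_0\},B(x_0,r))\cap\Gamma=\{\gamma\in\Gamma:d(\gamma x_0,x_0)<r\}$ is open in $\Gamma$.

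For the implication (ii) $\Rightarrow$ (i), I argue that every point of $\Gamma$ is isolated. Fix $x_0\in X$ and let $S:=\overline{B(x_0,1)}$, which is compact by Heine--Borel. If $\delta\in\Gamma$ satisfies $d(\delta x_0,x_0)<1$, then $\delta x_0\in\delta S\cap S$, so $\delta\in\Gamma_S$; hence the open set $V:=\{\delta\in\Gamma:d(\delta x_0,x_0)<1\}$ is contained in $\Gamma_S$ and is therefore finite by proper discontinuity. For an arbitrary $\gamma_0\in\Gamma$, applying the isometry $\gamma_0^{-1}$ shows $\{\gamma\in\Gamma:d(\gamma x_0,\gamma_0 x_0)<1\}\subseteq\gamma_0 V$, a finite open neighbourhood of $\gamma_0$. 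Since the compact-open topology is Hausdorff, hence $T_1$, a finite open neighbourhood forces $\gamma_0$ to be isolated; as $\gamma_0$ was arbitrary, $\Gamma$ is discrete.

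The substantive direction is (i) $\Rightarrow$ (ii), which I prove by contradiction. Suppose some compact $S$ has $\Gamma_S$ infinite, and choose distinct $\gamma_n\in\Gamma_S$ together with $s_n\in S$ satisfying $\gamma_n s_n\in S$. Passing to a subsequence, compactness of $S$ gives $s_n\to s$ and $\gamma_n s_n\to t$ in $S$. The family $\{\gamma_n\}$ is equicontinuous, and it is pointwise relatively compact: for fixed $x$, the triangle inequality gives $d(\gamma_n x,t)\le d(x,s_n)+d(\gamma_n s_n,t)$, which is bounded, so $\{\gamma_n x\}$ lies in a bounded, hence relatively compact, set by Heine--Borel. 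By the Arzel\`a--Ascoli theorem (for maps from a $\sigma$-compact, locally compact space into a metric space) I may pass to a further subsequence $\gamma_{n_k}$ converging uniformly on compact sets to a map $g$, which is automatically distance-preserving. The key trick is then to consider $\delta_k:=\gamma_{n_k}^{-1}\gamma_{n_{k+1}}\in\Gamma$: applying the isometry $\gamma_{n_k}$ yields, for each $x$, the identity $d(\delta_k x,x)=d(\gamma_{n_{k+1}}x,\gamma_{n_k}x)$, and the right-hand side tends to $0$ because both $\gamma_{n_k}x$ and $\gamma_{n_{k+1}}x$ converge to $g(x)$. Thus $\delta_k\to\operatorname{id}$ uniformly on compact sets, while each $\delta_k\ne\operatorname{id}$ (the $\gamma_n$ are distinct) and infinitely many of the $\delta_k$ are distinct. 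This exhibits the identity as a non-isolated point of $\Gamma$, contradicting discreteness.

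The main obstacle lies in the Arzel\`a--Ascoli step of the last paragraph: one must verify pointwise relative compactness of $\{\gamma_n\}$ (which is exactly where Heine--Borel enters) and extract a single subsequence converging on all of $X$ rather than merely on each compact set (which is where $\sigma$-compactness and a diagonal argument enter). A subtlety worth flagging is that the limit $g$ need not be surjective, so it need not lie in $\Gamma$ or even in $\operatorname{Isom}(X)$; the device of forming $\gamma_{n_k}^{-1}\gamma_{n_{k+1}}$ sidesteps this entirely, since it produces honest elements of the group $\Gamma$ and uses only convergence of the $\gamma_{n_k}$ themselves. Alternatively, one could deduce both implications from the van Dantzig--van der Waerden theorem, which asserts that $\operatorname{Isom}(X)$ is locally compact and acts properly on $X$; then (i) $\Rightarrow$ (ii) follows because a discrete subgroup is closed and $\Gamma_S=\Gamma\cap\operatorname{Isom}(X)_S$ is a discrete subset of a compact set, hence finite.
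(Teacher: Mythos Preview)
Your proof is correct and follows essentially the same Arzel\`a--Ascoli/diagonal-subsequence strategy as the paper for the substantive direction (i) $\Rightarrow$ (ii), and the same finite-neighbourhood argument for (ii) $\Rightarrow$ (i). The one genuine difference is in the endgame: the paper, having extracted a subsequence $\gamma_k$ converging uniformly on compacta to a distance-preserving map $\gamma$, goes on to show that $\gamma$ is \emph{surjective} (by running the same argument on $\gamma_k^{-1}$), so that $\gamma\in\operatorname{Isom}(X)$, and then concludes that $\Gamma$ is not discrete because it accumulates at $\gamma$ in the ambient group. Your device of forming $\delta_k=\gamma_{n_k}^{-1}\gamma_{n_{k+1}}\in\Gamma$ and showing $\delta_k\to\operatorname{id}$ with $\delta_k\ne\operatorname{id}$ is a cleaner finish: it produces a non-isolated point \emph{inside} $\Gamma$ directly, bypassing both the surjectivity argument and the (implicit) appeal to the fact that discrete subgroups of Hausdorff topological groups are closed. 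The paper's route has the side benefit of actually identifying the limit as a bona fide isometry of $X$; yours is shorter and, as you note, robust against the possibility that the limit map fails to be onto.
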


\begin{proof}
We first prove the easier direction (ii) $\Rightarrow$ (i).  
It suffices to show
 that, 
 for any $\gamma \in \Gamma$, 
 there exists an open set $\Gamma \subset W$
 such that $\sharp (\Gamma \cap W)< \infty$, 
 assuming (ii).  
Take any $x \in X$ and any open neighbourhood $V$ of $\gamma \cdot x$
 such that the closure $\overline V$ is compact.  
Let $W:=W(\{x\}, V)$.  
Then $\gamma \in W=\Gamma_{\{x\} \to V} \subset \Gamma_{\{x\} \to \overline V}$.  
On the other hand, 
 $\Gamma_{\{x\} \to \overline V}$ is finite 
 because the $\Gamma$-action on $X$ is properly discontinuous.  
Hence (ii) $\Rightarrow$ (i) is shown.

(i) $\Rightarrow$ (ii):\enspace
This is a non-trivial part.  
The argument uses a variation of the Ascoli--Arzela theorem to the metric space $(X,g)$.  
For completeness, 
 we include a full proof below.

Suppose, 
 on the contrary, 
 that the action of a group $\Gamma$ of isometries
 is not properly discontinuous.  
Then there exists a compact subset $S \subset X$, 
 an infinite sequence $\{\gamma_k\} \subset \Gamma$, 
 a sequence $\{s_k\} \subset S$
 such that $\gamma_k \cdot s_k \in S$
 for all $k \in {\mathbb{N}}$.  
We shall show
 that $\{\gamma_k\}$ cannot be discrete in the compact-open topology of $\operatorname{Isom}(X, g)$.

For $x \in S$, 
 we set $M(x)\equiv M(x;S):=\underset{a \in S}\max\, d (x,a)$.  
For any $x \in X$, 
 one has 
\[
  d(x, \gamma_k \cdot x) \le d(x, \gamma_k \cdot s) + d(\gamma_k \cdot s, \gamma_k \cdot x) \le 2 M(x).  
\]
Since every bounded closed set is compact, 
 $\{\gamma_k \cdot x\}$ has a convergent subsequence in $X$.

We take a countable and dense subset $\{x_j\}_{j \in {\mathbb{N}}}$ in $X$.  
By Cantor's diagonal argument, 
 there exist a subsequence of positive integers
 $k_1< k_2< \cdots$
 such that $\gamma_{k_{\ell}}x_j$ converges as $\ell$ tends to infinity
 for every $j \in {\mathbb{N}}$.  
For simplicity, 
 we continue to denote the subsequence $\gamma_{k_{\ell}}$
 by $\gamma_k$.

We claim that the sequence of maps $\gamma_k|_C$ converges uniformly
 on any compact subset $C$ in $X$.  
To see this, 
 let $\varepsilon >0$.  
Since $C$ is compact, 
 one can take $N>0$
such that for any $x \in C$
 there exists $j\equiv j(x)\in \{1,2,\dots, N\}$
 with $d(x,x_j)<\frac\varepsilon 3$.  
We take $T>0$ 
 such that 
\[
   d(\gamma_k \cdot x_i, \gamma_{k'} \cdot x_i)<\frac\varepsilon 3
\]
 for any $k,k'  \ge T$
 and for any $1 \le i \le N$.  
Then for any $x \in C$
\begin{align*} 
   d(\gamma_k \cdot x, \gamma_{k'} \cdot x)
 \le & d(\gamma_k \cdot x, \gamma_k \cdot x_j) + d(\gamma_k \cdot x_j, \gamma_{k'} \cdot x_j) + d(\gamma_{k'} \cdot x_j, \gamma_{k'} \cdot x)
\\
<&\frac \varepsilon 3 + \frac \varepsilon 3 + \frac \varepsilon 3
=\varepsilon, 
\end{align*}
because $\gamma_k$ is an isometry.  
Hence $\gamma_k \cdot x$ converges to an element, 
say $\gamma_C \cdot x$ in $X$.  
By taking a sequence $C_1 \subset C_2 \subset \cdots$
 of compact subsets in $X$
 with $X=\underset{i=1}{\overset{\infty}\cup} C_i$, 
 one sees that the map $\gamma_{C_j}|_{C_i}$ coincides with $\gamma_{C_i}$
 whenever $C_i \subset C_j$
 for $i \le j$, 
 because of the uniqueness of the limit.  
Hence $\gamma \colon X \to X$
 is defined as the inductive limit of $\gamma_{C_i}$.

We claim 
 that the limiting map $\gamma$ lies in $\operatorname{Isom}(X,g)$.  
In fact, 
 for any $x, x' \in X$, 
 one has
\begin{align*}
d(\gamma \cdot x, \gamma \cdot x')
=&d(\underset{k \to \infty} \lim \gamma_k \cdot x, 
    \underset{k \to \infty} \lim \gamma_k \cdot x')
\\
=&\underset{k \to \infty} \lim d(\gamma_k \cdot x, \gamma_k \cdot x')
=d(x,x').  
\end{align*}
Hence $\gamma$ is an isometry.  
Moreover, 
 the sequence $\{\gamma_k^{-1}\}$ yields $\gamma^{-1}$
 as its limit, 
 showing that the isometry $\gamma \colon X \to X$ is a surjective map.  
Since $\gamma_k$ converges to $\gamma$
 with respect to the compact-open topology, 
 $\Gamma$ is not closed
 in $\operatorname{Isom}(X, g)$.  
Since a discrete group is closed
 (see {\it{e.g.,}} \cite[(5.10)]{HR63}), 
 the reverse implication (i) $\Rightarrow$ (ii) is proved.  
\end{proof}

\subsection{Isometric Actions in Pseudo-Riemannian Geometry}
~~~\newline
The group of isometries of any pseudo-Riemannian manifold is a Lie group.  
However, 
 the proof of Proposition~\ref{prop:isomR} relies heavily
 on the {\emph{positive-definiteness}}
 of the metric on $X$.  
This leads to the following question:
\begin{question}
[Action of Isometric Discrete Group]
\label{q:discont_discrete}
Does the equivalence (i) $\Leftrightarrow$ (ii)
Proposition~\ref{prop:isomR} still hold in the pseudo-Riemannian setting?
\end{question}

Unfortunately, 
 an analogue of Proposition~\ref{prop:isomR}
 fails in pseudo-Riemannian geometry.

Let $X$ be a pseudo-Riemannian manifold.  
Let $\operatorname{Isom}(X)$ denote 
 the group of isometries, 
 and let $\Gamma$ be a subgroup of $\operatorname{Isom}(X)$.  
Then the implication (ii) $\Rightarrow$ (i)
 in Proposition~\ref{prop:isomR} remains true, 
 but the converse implication (i) $\Rightarrow$ (ii) does not, 
 as demonstrated in the following example.

\begin{example}
[Isometric but Non-Proper Action]
\label{ex:Lorentz_isometry}
Let $\Gamma := {\mathbb{Z}}$ act on $X:={\mathbb{R}}^2$ by
\[
  (x,y) \mapsto (e^n x, e^{-n} y)
  \qquad
\text{for $n \in {\mathbb{Z}}$.}
\]
We first observe
 that there does not exist a metric $d$ on $X={\mathbb{R}}^2$
 with respect to which $L$ acts isometrically.  
In fact, 
 suppose such metric $d$ existed.  
Let $o:=(0,0)$ and $p:=(0,1)$.  
Then for $t \in {\mathbb{Z}}$, 
 we compute 
\[
   d(o,p)=d (t \cdot o, t \cdot p)=d(o, (o, e^{-t})).  
\]
Taking the limit as $t \to \infty$, 
 we find $d(o,p)\to 0$, 
 hence $d(o,p)=0$, 
 which contradicts the positive definiteness of $d$.

While no $\Gamma$-invariant {\emph{Riemannian}} metric exists on $X$, 
 there does exist a $\Gamma$-invariant {\emph{Lorentzian structure}} on $X$.  
Indeed, 
 consider the two-dimensional Minkowski space ${\mathbb{R}}^{1,1}$
 with coordinates 
\[
(x,y):=(x_1+x_2, x_1-x_2), 
\]
 where the Lorentzian metric tensor is given
 by $d x d y= dx_1^2-dx_2^2$.  
Then the $\Gamma$-action preserves the Lorentzian structure.  
Thus, 
 $\Gamma$ forms a discrete group of isometries
 of a Lorentzian manifold, 
 but the action is not properly discontinuous, 
 since the origin $o$ is fixed 
 by all elements of $\Gamma$.  
\end{example}

\medskip 
This example will be revisited from different perspectives
 throughout the paper.  
For instance, 
 it will appear in Example \ref{ex:R2b} from a group-theoretic point of view, 
 and again, 
 in Example \ref{ex:SL2qX} in the context of dynamical volume estimates.

\subsection{A Large Isometry Group}
~~~\newline
As mentioned earlier, 
 the isometry group of any pseudo-Riemannian manifold 
 is a Lie group.  
Here, 
 we present a representative class of pseudo-Riemannian manifolds
 whose isometry group act transitively.  

\begin{proposition}
\label{prop:redGH}
Let $G \supset H$ be a pair of real reductive Lie groups, 
 and let $X:=G/H$.  
Then the homogeneous space $X$ admits a pseudo-Riemannian structure
with respect to which $G$ acts isometrically.  
\end{proposition}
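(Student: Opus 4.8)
The plan is to build the metric infinitesimally at the base point $o := eH$ and then spread it over all of $X = G/H$ by the transitive $G$-action, so that $G$-invariance holds by construction. The input is that a real reductive Lie group $G$ carries on its Lie algebra $\mathfrak{g}$ a non-degenerate symmetric bilinear form $B$ that is invariant under $\operatorname{Ad}(G)$: for $G$ semisimple one takes the Killing form, and in general one adds a non-degenerate form on the center. Fixing a Cartan involution $\theta$ with Cartan decomposition $\mathfrak{g} = \mathfrak{k} \oplus \mathfrak{p}$, one normalizes $B$ so that $-B(\,\cdot\,, \theta\,\cdot\,)$ is positive definite; then $B$ is negative definite on $\mathfrak{k}$, positive definite on $\mathfrak{p}$, and $\mathfrak{k} \perp \mathfrak{p}$.

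First I would record the passage from $\mathfrak{g}$ to the tangent space at $o$, which may be identified with $\mathfrak{g}/\mathfrak{h}$. Since the pair is reductive, after replacing $\theta$ by a conjugate we may assume $H$ is $\theta$-stable, so that $\mathfrak{h} = (\mathfrak{h} \cap \mathfrak{k}) \oplus (\mathfrak{h} \cap \mathfrak{p})$; on this splitting $B$ is negative definite on the first summand, positive definite on the second, and the two summands are $B$-orthogonal. Hence $B|_{\mathfrak{h}}$ is non-degenerate. This is the crux of the argument and the only genuinely non-trivial step: it is exactly where the reductivity of $H$ enters, since for a general subalgebra one could have $\mathfrak{h} \cap \mathfrak{h}^{\perp} \ne 0$ and the whole construction would collapse.

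Next, setting $\mathfrak{m} := \mathfrak{h}^{\perp}$, the non-degeneracy of $B|_{\mathfrak{h}}$ yields the direct-sum decomposition $\mathfrak{g} = \mathfrak{h} \oplus \mathfrak{m}$, the restriction $B|_{\mathfrak{m}}$ is again non-degenerate, and $\mathfrak{m}$ is $\operatorname{Ad}(H)$-invariant because $B$ and $\mathfrak{h}$ are both $\operatorname{Ad}(H)$-stable. Identifying $\mathfrak{m}$ with $\mathfrak{g}/\mathfrak{h} \cong T_o X$ through the differential of the projection $G \to G/H$ then furnishes an $\operatorname{Ad}(H)$-invariant, non-degenerate symmetric bilinear form $g_o := B|_{\mathfrak{m}}$ on $T_o X$, of signature $(\dim(\mathfrak{m} \cap \mathfrak{p}), \dim(\mathfrak{m} \cap \mathfrak{k}))$.

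Finally I would propagate $g_o$ over $X$. Since $G$ is transitive, a $G$-invariant tensor field is determined by its value at $o$, and conversely any $\operatorname{Ad}(H)$-invariant form on $T_o X$ extends to a unique $G$-invariant field: defining $g_{gH}$ by transporting $g_o$ along the differential at $o$ of left translation by $g$, the $\operatorname{Ad}(H)$-invariance is precisely what makes the definition independent of the representative $g$. The resulting tensor $g$ is smooth, everywhere non-degenerate of constant signature, and by construction invariant under left translation, so $G$ acts by isometries. This exhibits the desired $G$-invariant pseudo-Riemannian structure on $X = G/H$, completing the argument.
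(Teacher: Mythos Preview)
Your proof is correct and follows essentially the same approach as the paper: both invoke a $\theta$-stable position for $H$ (the paper cites Mostow's theorem), use the resulting non-degeneracy of $B|_{\mathfrak{h}}$ to obtain an $\operatorname{Ad}(H)$-invariant non-degenerate form on $T_oX$, and then propagate by left translation. The only cosmetic difference is that you work with the orthogonal complement $\mathfrak{m}=\mathfrak{h}^{\perp}$ whereas the paper phrases everything on the quotient $\mathfrak{g}/\mathfrak{h}=\mathfrak{k}/(\mathfrak{h}\cap\mathfrak{k})\oplus\mathfrak{p}/(\mathfrak{h}\cap\mathfrak{p})$, arriving at the same signature $(d(X),e(X))$.
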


\begin{proof}
By a theorem of Mostow, 
 there exists a Cartan involution $\theta$ of $G$
 such that $\theta H=H$.  
Let ${\mathfrak{g}}={\mathfrak{k}}+{\mathfrak{p}}$
 be the corresponding Cartan decomposition 
 of the Lie algebra.  
Take an $\operatorname{Ad}(G)$-invariant, non-degenerate
 symmetric bilinear form $B$
 on ${\mathfrak{g}}$
 such that $B|_{{\mathfrak{k}}\times {\mathfrak{k}}}$
 is negative definite, 
 $B|_{{\mathfrak{p}}\times {\mathfrak{p}}}$ is positive definite, 
 and $B|_{{\mathfrak{k}}\times {\mathfrak{p}}} \equiv 0$
 ({\it{e.g.,}} the Killing form if ${\mathfrak{g}}$ is semisimple).

Then, 
 $B$ induces an $H$-invariant, 
 non-degenerate symmetric bilinear form $\overline B$
 on the quotient space
\[
   {\mathfrak{g}}/{\mathfrak{h}}
   =
   {\mathfrak{k}}/({\mathfrak{h}} \cap {\mathfrak{k}})
   \oplus
   {\mathfrak{p}}/({\mathfrak{h}} \cap {\mathfrak{p}}), 
\]
of signature $(d(X), e(X))$, 
 where 
\begin{equation}
\label{eqn:dx}
\text{$d(X):= \dim {\mathfrak{p}}/({\mathfrak{h}} \cap {\mathfrak{p}})$
 \quad and \quad $e(X):=\dim {\mathfrak{k}}/({\mathfrak{h}} \cap {\mathfrak{k}})$.}
\end{equation}
Identifying ${\mathfrak{g}}/{\mathfrak{h}}$ 
 with the tangent space $T_o X$
 at $o:= e H \in X$, 
 we extend this bilinear form $\overline B$
 to each $T_{g \cdot o}X$ for $g \in G$ 
 via the left translation map
 $d L_g \colon T_o X \to T_{g \cdot o}X$.  
This extension is well-defined
 because the bilinear form $\overline B$ is $H$-invariant.

Consequently, 
 $X$ carries a pseudo-Riemannian structure
 of signature $(d(X),e(X))$, 
 on which $G$ acts isometrically by construction.  
\end{proof}

The numbers $d(X)$ and $e(X)$ also have natural geometric interpretations:
 the homogeneous space
 $X=G/H$ admits a $K$-equivariant smooth vector bundle structure
\[
  {\mathbb{R}}^{d(X)} \to X \to Y, 
\]
 where the base space $Y$ is the compact manifold
 $K/H \cap K$ of dimension $e(X)$, 
 see \cite{K89} for example.

Here are some classical examples:

\begin{example}
[Riemannian Symemtric Space]
Let $H=K$, 
 a maximal compact subgroup of $G$.  
Then $d (X)=\dim {\mathfrak{p}}$
 and $e(X)=0$.  
Hence the pseudo-Riemannian structure on $X=G/K$
 is positive definite.  
The resulting Riemannian manifold $G/K$ is called
 a {\emph{Riemannian symmetric space}}.  
\end{example}

\begin{example}
[Pseudo-Riemannian Space Form]
\label{ex:Xpq_group}
Let $(G, H)=(O(p,q), O(p-1,q))$, 
 and $X=G/H$.  
By a straightforward computation, 
 we have  $d(X)=q$, 
 $e(X)=p-1$.  
Thus, the pseudo-Riemannian manifold $X$
 is of signature $(q, p-1)$, 
 and can be identified with the hypersurface
\[
  X(p-1,q)_+=\{x \in {\mathbb{R}}^{p+q}:x_{1}^2 + \cdots +  x_{p}^2 -  x_{p+1}^2 - \cdots -  x_{p+q}^2=1\} 
\]
in ${\mathbb{R}}^{p,q}$.  
The manifold $X(p-1, q)_+$ is diffeomorphic
 to a vector bundle 
 over the sphere $S^{p-1}$
 with fiber ${\mathbb{R}}^q$.  
Note that the signature $(d(X), e(X))$ is opposite
 to the convention 
 used in Example~\ref{ex:Rpq_surface}.  
This sign reversal is explained in Remark~\ref{rem:Rqp}.  
\end{example}

The de Sitter space $\operatorname{dS}^n=X(n-1,1)_+$ is a special case
 of Example~\ref{ex:Xpq_group}.  
The Calabi--Markus theorem 
 (Theorem~\ref{thm:CM}) can be reformulated
 in group-theoretic terms as follows:

\begin{theorembis}{thm:CM}
[Calabi--Markus {\cite{CM}}]
Let $(G,H)=(O(n,1),O(n-1,1))$.  
If a discrete subgroup $\Gamma \subset G$ acts 
 properly discontinuously on $G/H$, 
 then $\Gamma$ must be finite.  
\end{theorembis}

\subsection{Elementary Consequences of Proper Actions}
~~~\newline
We begin by discussing some elementary consequences 
 of proper actions 
 in the general setting where a
 locally compact group
 acts continuously on a locally compact Hausdorff space. 

\begin{proposition}
\label{prop:118}
Suppose that a locally compact group $L$ acts properly
 on a locally compact Hausdorff space.  
Then the following hold:

{\rm{(1)}}\enspace
The quotient space $L\backslash X$ is Hausdorff in the quotient topology;

{\rm{(2)}}\enspace 
Each orbit $L \cdot x$ is closed in $X$ for all $x \in X$;

{\rm{(3)}}\enspace
Each isotropy subgroup $L_x$ is compact for all $x \in X$.  
\end{proposition}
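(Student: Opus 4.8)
The plan is to base everything on the characterization of properness via the map $\varphi\colon L\times X\to X\times X$, $(g,x)\mapsto(x,gx)$, which is proper by Definition--Lemma~\ref{def:properaction}(ii), together with the observation already recorded above that every proper map is closed. All three assertions then follow by feeding suitable closed sets through $\varphi$. I would dispose of (3) first, since it is immediate: taking the compact set $S=\{x\}$ in condition (iv) of Definition--Lemma~\ref{def:properaction} gives at once that $L_x=L_{\{x\}}$ is compact.

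For (2), I would note that $L\times\{x\}$ is closed in $L\times X$, because $\{x\}$ is closed in the Hausdorff space $X$. Hence its image $\varphi(L\times\{x\})=\{x\}\times(L\cdot x)$ is closed in $X\times X$, using that the proper map $\varphi$ is closed. Pulling this back along the continuous embedding $i_x\colon X\to X\times X$, $y\mapsto(x,y)$, and using the identity $i_x^{-1}\bigl(\{x\}\times(L\cdot x)\bigr)=L\cdot x$, I conclude that the orbit $L\cdot x$ is closed in $X$.

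For (1), the strategy rests on two ingredients. First, the quotient map $q\colon X\to L\backslash X$ is open: for open $U\subset X$ one has $q^{-1}(q(U))=\bigcup_{g\in L}gU$, a union of open sets, so $q(U)$ is open. Second, the orbit equivalence relation $R:=\{(x,y)\in X\times X: y\in L\cdot x\}$ coincides with the image $\varphi(L\times X)$, and is therefore closed in $X\times X$, once again because $\varphi$ is closed. I would then combine these via the standard fact that, for an open continuous surjection $q$, the quotient $L\backslash X$ is Hausdorff precisely when $R$ is closed in $X\times X$: given two distinct classes, lift them to a point of $(X\times X)\setminus R$, separate that point by a basic open box $U\times V$ disjoint from $R$, and observe that $q(U)$ and $q(V)$ are then disjoint open neighbourhoods of the two classes.

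The routine parts are the set-theoretic identities $\varphi(L\times\{x\})=\{x\}\times(L\cdot x)$, $q^{-1}(q(U))=\bigcup_{g\in L}gU$, and $R=\varphi(L\times X)$, together with the verification that $q(U)\cap q(V)=\emptyset$. The one step requiring genuine care---the main obstacle---is the general-topology implication \emph{open quotient map $+$ closed relation $\Rightarrow$ Hausdorff quotient} invoked in (1); I would state and prove this lemma in the direction needed, since it is exactly what converts the closedness of $R$ (the real output of properness, via $\varphi$ being closed) into the separation of points in $L\backslash X$.
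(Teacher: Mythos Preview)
Your proposal is correct and follows essentially the same route as the paper: all three parts are deduced from the fact that the action map $\varphi\colon L\times X\to X\times X$ is proper, hence closed, with (3) immediate from compactness of $L_{\{x\}}$, (2) from closedness of $\varphi(L\times\{x\})$, and (1) from closedness of $\operatorname{Image}\varphi=(\pi\times\pi)^{-1}(\operatorname{diag}(L\backslash X))$. The only difference is cosmetic: the paper phrases (1) via the diagonal characterization of Hausdorffness and leaves the openness of $\pi$ (needed to pass from ``preimage closed'' to ``diagonal closed'') implicit, whereas you unpack this as the explicit lemma \emph{open quotient map $+$ closed relation $\Rightarrow$ Hausdorff quotient}.
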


The condition (2) is equivalent to
 the statement that the quotient space $L \backslash X$ satisfies
 the (T$_1$) separation axiom.  
Thus, the implication (1) $\Rightarrow$ (2) in Proposition~\ref{prop:118} is
 immediate.  
We note that the Hausdorff property
 is global in nature, 
 whereas the (T$_1$) property is local.

\begin{definition}
\label{def:CI}
A continuous action is said to have the (CI) {\emph{property}}
 if the condition (3) in Proposition~\ref{prop:118} is satisfied.  
\end{definition}

The (CI) property is an abbreviation
 introduced by the author \cite{KICM90sate}, 
 standing for \lq\lq{Compact Isotropy}\rq\rq,
 which refers to the condition
 that all isotropy subgroup are compact.

Let $\varphi \colon L \times X \to X \times X$, $(g,x) \mapsto (x, g x)$
 be the action map, 
 as used in Definition-Lemma~\ref{def:properaction} (ii).  
If the $L$-action on $X$ is proper, 
 then $\varphi$ is a closed map.  

\begin{proof}
[Proof of Proposition~\ref{prop:118}]
(1)\enspace
Let $\overline X:= L \backslash X$ denote the quotient space, 
 and let $\pi \colon X \to \overline X$
 be the quotient map.  
To show that $\overline X$ is Hausdorff, 
 it suffices to prove that the complement
\[
   \overline X \times \overline X \setminus \operatorname{diag} (\overline X)
\]
 is open.  
Equivalently, 
 it suffices to show
 that the preimage of the diagonal under $\pi \times \pi$, 
 {\it{i.e.}}, 
\[
(\pi \times \pi)^{-1}(\operatorname{diag} (\overline X))=\operatorname{Image} \varphi
\]
is closed in $X \times X$.  
Since the action is proper, 
 $\varphi$ is a proper map
 and hence closed, 
 which implies that $\operatorname{Image}(\varphi)$ is closed.

(2)\enspace
Again, 
 since $\varphi$ is a closed map, 
 $\varphi(L \times \{x\}) = \{x\} \times L \cdot x$ is closed.  

(3)\enspace
Since $\varphi$ is proper, 
 $L_x=L_{\{x\} \to \{x\}}$ is compact.  
\end{proof}


\subsection{Subtle Examples (Hausdorff $\ne (\text{T}_1)$)}
~~~\newline
One may naturally ask
 whether the converse of the statements
 of Proposition~\ref{prop:118} also holds.  
In particular, 
 we consider 
 whether the following implications are generally valid:

\begin{tabular}{lll}
(A)&
free action 
&{ $\overset{?}{\Longrightarrow}$}
{proper action, }
\rule{0pt}{15pt}
\\
(B)&
any orbit is closed 
&{ $\overset{?}{\Longrightarrow}$}
$L \backslash X$ is Hausdorff.  
\\ 
\end{tabular}

However, 
 neither of these statements hold
 in the setting 
 where $X$ is a locally compact topological space
 endowed with a continuous action 
 of a locally compact group $L$.

\begin{example}
\label{ex:R2a}
Let $L:={\mathbb{R}}$, the additive group,  act on 
\[
X:={\mathbb{R}}^2 \setminus \{(0, 0)\}
\quad
\text{by }
(x,y)
\mapsto
(e^t x, e^{-t} y)
\quad
\text{for $t \in {\mathbb{R}}$}.  
\]
Then the action of $L$ on $X$ is free, 
and each $L$-orbit is closed.  
However, 
 the action is not proper.  
To see this, 
 consider the compact subset of $X$, 
 defined by $S:=\{(x,y):x^2+y^2=1\}$.  
Then $L_S=L$, showing that the $L$-action fails to be proper.

Moreover, 
the two points $(0,1)$ and $(1,0)$ define different points
 in the quotient space $L \backslash X$, 
 however, 
 these two points cannot be separated
 by open sets in the quotient topology.  
Hence, $L \backslash X$ is not Hausdorff. 
\end{example}

In the next section, 
 we show how the setting of Example~\ref{ex:R2b} naturally
 arises from the framework 
 of triples $L \subset G \supset H$
 of Lie groups.

\subsection{Group Theoretic Viewpoint: Properness for Triples $(L, G, H)$}
\label{subsec:properLGH}
~~~\newline
Let $G$ be a locally compact group, 
 and consider a triple of locally compact groups
\[
   L \subset G \supset H, 
\]
 where $L$ and $H$ are closed subgroups.  
We consider the natural action of the subgroup $L$
 on the homogeneous space
 $X:=G/H$.

\begin{example}
\label{ex:R2b}
Let $G:=SL(2,{\mathbb{R}})$.  
We define two subgroups of $G$
 by 
\begin{align*}
  A:=&\{\begin{pmatrix} e^t & 0 \\ 0 & e^{-t} \end{pmatrix}:
t \in {\mathbb{R}} \}, 
\\
  N:=&\{\begin{pmatrix} 1 & n \\ 0 & 1 \end{pmatrix}:
n \in {\mathbb{R}}\}.  
\end{align*}

There is a natural diffeomorphism
\[
G/N \simeq {\mathbb{R}}^2 \setminus \{(0, 0)\}, \quad
g N \mapsto g \begin{pmatrix} 1 \\ 0 \end{pmatrix}, 
\]
under which the $A$-action on $G/N$
 coincides with that described in Example~\ref{ex:R2a}.  
Hence, 
 the $A$-action on $G/N$ is not proper.  
By duality---as stated in Proposition~\ref{prop:LH_proper} below---the $N$-action on $G/A$ is likewise not proper.  
\end{example}


\subsection{Lipsman's Conjecture (1995)}
\label{subsec:Lipsman}
~~~\newline
Suppose
 that $L \subset G \supset H$ is a triple of {\emph{real reductive Lie groups}}.  
In 1989,
 the author obtained a criterion
 for the properness of the $L$-action on $G/H$.  
The result, 
 originally proved in \cite{K89}, 
 can be reformulated below, 
 see also Theorem~\ref{thm:proper89}:
\begin{theorem}
[{\cite[Example 5 (1)]{KICM90sate}}]
\label{thm:proper}
The following two conditions are equivalent:
\newline
{\rm{(i)}}\enspace
the $L$-action on $G/H$ is proper;
\newline
{\rm{(ii)}}\enspace
the $L$-action on $G/H$ has the (CI) property
 (Definition~\ref{def:CI}).  
\end{theorem}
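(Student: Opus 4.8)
The plan is to separate the two implications, since only one of them depends on reductivity. For (i) $\Rightarrow$ (ii) there is nothing to do beyond invoking the general theory: $X = G/H$ is a locally compact Hausdorff space, so by Proposition~\ref{prop:118}(3) a proper $L$-action has compact isotropy groups, which is exactly the (CI) property of Definition~\ref{def:CI}. (Recall that the isotropy subgroup at $gH$ is $L_{gH} = L \cap gHg^{-1}$.) All the content is in the converse (ii) $\Rightarrow$ (i), and the necessity of reductivity is already visible from Example~\ref{ex:R2a}, where a free action---hence trivially (CI)---fails to be proper. I would establish (ii) $\Rightarrow$ (i) in contrapositive form: assuming the $L$-action is not proper, I would exhibit a single non-compact isotropy subgroup $L \cap gHg^{-1}$.

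The engine of the argument is the Cartan decomposition $G = K \exp(\overline{\mathfrak{a}^+}) K$ attached to $\mathfrak{g} = \mathfrak{k} + \mathfrak{p}$ (as in Proposition~\ref{prop:redGH}), together with the Cartan projection $\mu \colon G \to \overline{\mathfrak{a}^+} \subset \mathfrak{a}$. Two standard properties of $\mu$ are all I need: first, $\mu^{-1}$ of a bounded set is relatively compact, so a closed subgroup of $G$ is compact if and only if its $\mu$-image is bounded; second, $\mu$ is Lipschitz under conjugation, $\|\mu(ghg^{-1}) - \mu(h)\| \le 2\|\mu(g)\|$. These combine into the inclusion
\[
  \mu(L \cap gHg^{-1}) \subseteq \mu(L) \cap \bigl(\mu(H) + B_{2\|\mu(g)\|}\bigr),
\]
where $B_R \subset \mathfrak{a}$ denotes the ball of radius $R$. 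This inclusion reduces the (CI) property to a purely linear statement about the relative position of the two subsets $\mu(L)$ and $\mu(H)$ of the vector space $\mathfrak{a}$: if every bounded thickening $\mu(H) + B_R$ meets $\mu(L)$ in a bounded set, then every isotropy image is bounded and (CI) holds.

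The crux, and the place where reductivity is indispensable, is the reverse passage from a non-proper action to an honest non-compact subgroup. Using Definition--Lemma~\ref{def:properaction}(iv) I would first produce a compact $S \subset G/H$ and a sequence $l_k \to \infty$ in $L$ with $l_k s_k \in S$; lifting to $G$ and writing $h_k := (g_k')^{-1} l_k g_k \in H$ with $g_k, g_k'$ ranging in a fixed compact lift of $S$, I obtain escaping sequences $l_k \in L$ and $h_k \in H$ tied by $l_k = g_k' h_k (g_k)^{-1}$. Passing to subsequences, the compact factors $g_k, g_k'$ and the $K$-components of the Cartan decompositions of $l_k$ and $h_k$ all converge, while the normalised directions $\mu(l_k)/\|\mu(l_k)\|$ and $\mu(h_k)/\|\mu(h_k)\|$ tend to limit rays; the Lipschitz estimate forces these two rays to coincide in $\mathfrak{a}$ up to the bounded error. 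Reductivity then allows me to convert these asymptotically parallel $\mathbb{R}$-split directions into a genuine shared object: a nonzero hyperbolic element $X$ lying simultaneously in $\mathfrak{l}$ and in $\operatorname{Ad}(g)\mathfrak{h}$ for the element $g$ assembled from the limiting $K$-parts, so that $\{\exp(tX)\}_{t \in \mathbb{R}} \subset L \cap gHg^{-1}$ is non-compact.

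I expect this last synchronisation to be the main obstacle. The soft part---extracting escaping sequences and taking limits of the compact data---is routine, but upgrading the statement that the Cartan projections of $l_k$ and of a conjugate of $h_k$ stay a bounded distance apart into the statement that a single $\mathbb{R}$-split one-parameter subgroup sits inside $L \cap gHg^{-1}$ is precisely what uses the algebraic (reductive) structure and has no analogue for the non-algebraic flow of Example~\ref{ex:R2a}. Making it rigorous requires controlling the two Cartan decompositions simultaneously and ruling out the possibility that the limiting direction is merely asymptotic rather than realised by an actual element of the intersection.
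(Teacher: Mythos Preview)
Your outline is in the right spirit and correctly isolates the only nontrivial implication, but the final ``synchronisation'' step---which you yourself flag as the obstacle---is a genuine gap, and the paper handles it by inserting one structural move that you are missing.

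The paper does \emph{not} run the sequence argument directly with reductive $L$ and $H$. Instead it first observes that both sides of the equivalence are invariant under the relation $\sim$ of Definition~\ref{def:pitch}: properness because $L\pitchfork H$ depends only on $\sim$-classes (Lemma~\ref{lem:simpitch}), and the (CI) property for essentially the same reason. Since $L\sim A_L$ and $H\sim A_H$ by the Cartan decomposition (Example~\ref{ex:similar_A_N}), one may assume from the outset that $L$ and $H$ are connected subgroups of the split torus $A$, i.e.\ $\mathfrak{l},\mathfrak{h}\subset\mathfrak{a}$ (Lemma~\ref{lem:pro89}). This is exactly what makes your synchronisation step tractable: the normalised directions $Y_n\in\mathfrak{l}$ and $Z_n\in\mathfrak{h}$ now live in $\mathfrak{a}$ from the start, so their limits $Y,Z$ automatically lie in $\mathfrak{l}$ and $\mathfrak{h}$ respectively. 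In your setup, by contrast, $\mu(l_k)/\|\mu(l_k)\|$ lies in $\mathfrak{a}$ but not a priori in $\mathfrak{l}$, so the limiting direction need not produce an element of $\mathfrak{l}$ at all without further argument.

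After the reduction, the paper's mechanism for forcing $Y=Z$ (up to the Weyl group) is concrete and not merely ``reductivity allows'': one writes $c_n=\exp(t_nY_n)\,d_n^{-1}\exp(-t_nZ_n)$ with $c_n\to c$, $d_n\to d$ compact, and applies $\operatorname{Ad}$ to each root space $\mathfrak{g}_\alpha$. Convergence of $\operatorname{Ad}(c_n)$ forces
\[
\operatorname{Ad}(c)\,\mathfrak{g}_\alpha \;\subset\; \bigoplus_{\beta(Y)\ge\alpha(Z)}\mathfrak{g}_\beta
\]
for every root $\alpha$, and the symmetric inclusion; together these pin down $Y=Z\in\mathfrak{l}\cap\mathfrak{h}$ (after a Weyl adjustment, $\mathfrak{l}\cap W\mathfrak{h}\neq\{0\}$). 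This root-space bookkeeping is the substitute for your unspecified ``Lipschitz estimate forces the rays to coincide'' step; the Lipschitz bound on $\mu$ alone only gives that the rays are at bounded distance, which does not by itself yield a common one-parameter subgroup.
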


In 1995, 
 Lipsman \cite{Li95} raised a question
 of whether the equivalence in Theorem \ref{thm:proper} remains valid
 for triples of {\emph{nilpotent Lie groups}}.  

\begin{conjecture}
[Lipsman's Conjecture {\cite{Li95}}]
Let $G$ be a connected and simply connected nilpotent Lie group, 
 and let $L$, $H$ be two connected closed subgroups.  
Are the following two conditions equivalent?
\newline
{\rm{(i)}}\enspace
the $L$-action on $G/H$ is proper;
\newline
{\rm{(ii)}}\enspace
the $L$-action on $G/H$ has the (CI) property.  
\end{conjecture}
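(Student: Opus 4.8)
The implication (i) $\Rightarrow$ (ii) is immediate in full generality: by Proposition~\ref{prop:118}(3) a proper action has compact isotropy, which is exactly the (CI) property of Definition~\ref{def:CI}. All the content lies in the converse (ii) $\Rightarrow$ (i), and here I would first exploit a simplification special to the nilpotent setting. A simply connected nilpotent Lie group is diffeomorphic to a Euclidean space and in particular contains no nontrivial compact subgroup; applied to $G$ this forces every compact isotropy group $L_{gH}=L\cap gHg^{-1}$ to be trivial. Hence, for such $G$, the (CI) property is equivalent to the action being \emph{free}, and the conjecture reduces to the sharp-looking assertion that, for a triple of connected subgroups of a simply connected nilpotent group, \emph{free} $\Rightarrow$ \emph{proper}.

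The plan is to prove this by contradiction, after passing to the standard reformulation of properness for the triple $L \subset G \supset H$ (the $\pitchfork$ relation of the introduction): by Definition--Lemma~\ref{def:properaction}, after lifting compact subsets of $G/H$ to $G$, the $L$-action on $G/H$ is proper if and only if $L \cap C\,H\,C$ is relatively compact for every compact $C \subset G$. Negating this yields sequences $l_k \in L$ and $h_k \in H$ with $l_k \to \infty$, together with bounded $g_k, g_k' \in G$ (converging, along a subsequence, to $g,g'$), satisfying $l_k = g_k' h_k g_k^{-1}$; note that $h_k \to \infty$ as well. Working in logarithmic coordinates via the diffeomorphism $\exp\colon \mathfrak g \to G$, I would renormalize $X_k := \log l_k \in \mathfrak l$ and $Y_k := \log h_k \in \mathfrak h$ using the natural dilations $\delta_t$ attached to the lower central series grading of $\operatorname{gr}\mathfrak g$, and extract nonzero limiting directions $\bar X \in \mathfrak l$ and $\bar Y$. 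The aim is to show that these limits force a nontrivial element of $\mathfrak l$ into a conjugate of $\mathfrak h$, thereby producing a nontrivial stabilizer and contradicting freeness. A more structural alternative is induction on $\dim G$: pick a one-dimensional central subgroup $Z \subset G$ and analyse the three cases according to whether $Z$ meets $H$, meets $L$, or neither, quotienting by $Z$ to lower the dimension while tracking how freeness and properness descend.

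The hard part---and the reason this remains a conjecture rather than a theorem---is precisely the point where the reductive argument breaks down. In the reductive case (Theorem~\ref{thm:proper}) properness is governed by a genuinely \emph{linear} criterion, the disjointness up to the Weyl group of the Cartan projections of $L$ and $H$, and it is this linearization that allows pointwise compact-isotropy information to propagate to the uniform compactness demanded by properness. In the nilpotent setting there is no such global Cartan projection: the conjugate $gHg^{-1}$ depends essentially on $g$, and the Campbell--Hausdorff relation $X_k = \log(g_k') \ast Y_k \ast \log(g_k^{-1})$ is polynomial rather than linear, so the renormalized limits $\bar X, \bar Y$ need not respect the original relation $l_k = g_k' h_k g_k^{-1}$ in any naive fashion. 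Controlling the interaction between the grading dilations $\delta_t$ and the nonabelian corrections in Campbell--Hausdorff---equivalently, ensuring that freeness at the level of the associated graded lifts to properness upstairs---is the decisive obstacle, and it is exactly here that the outcome is expected to be sensitive to the nilpotency step and to the relative position of $\mathfrak l$ and $\mathfrak h$.
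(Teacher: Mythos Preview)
There is no proof in the paper to compare against, because the statement is a \emph{conjecture}, not a theorem---and in fact the paper records that it is \emph{false} in general. Immediately after stating the conjecture, the paper summarizes its status: the implication (ii) $\Rightarrow$ (i) holds when $G$ is at most $3$-step nilpotent (Nasrin; Baklouti--Khlif; Yoshino), but Yoshino produced a counterexample in the $4$-step case: a triple $L\subset G\supset H$ with $L\simeq\mathbb{R}^2$ and $G/H\simeq\mathbb{R}^5$ in which the $L$-action is free, all orbits are closed, the orbit space is ($\mathrm{T}_1$) but not Hausdorff, and the action is not proper. So any purported general proof of (ii) $\Rightarrow$ (i) must fail, and your task is not to close the gap but to recognize that the gap is unclosable.

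That said, the preliminary reductions in your proposal are correct and agree with what the paper says. The implication (i) $\Rightarrow$ (ii) is Proposition~\ref{prop:118}(3), and your observation that in a simply connected nilpotent group the (CI) property collapses to freeness is exactly the equivalence (ii) $\Leftrightarrow$ (ii)$'$ noted in the text. Your diagnosis of the obstruction is also on target: the failure of a Cartan-projection-type linearization, and the fact that the Campbell--Hausdorff polynomial corrections do not interact well with rescaling, is precisely why the argument that works in the reductive case (Theorem~\ref{thm:proper}) does not transfer. What you should take away is that this obstruction is not merely technical but genuine: Yoshino's example shows that the renormalization/limit argument you sketch cannot be repaired for step $\ge 4$, and the ``sensitivity to the nilpotency step'' you anticipate is in fact a hard threshold between true ($\le 3$) and false ($\ge 4$).
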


The implication (i) $\Rightarrow$ (ii) holds in general
 (see Proposition~\ref{prop:118} (2)).  
On the other hand, 
 in the simply-connected nilpotent setting, 
 condition (ii) is equivalent to:
\begin{itemize}
\item[{\rm{(ii)$'$}}]
{\sl{the $L$-action on $G/H$ is free.}}  
\end{itemize}

\vskip 1pc
Lipsman's conjecture has been proved affirmatively 
 when the nilpotent Lie group $G$ is at most 3-step;
 that is, 
 when 
\[
  [{\mathfrak{g}}, [{\mathfrak{g}}, [{\mathfrak{g}}, {\mathfrak{g}}]]]=\{0\}, 
\]
but it fails in general.  
The status is summarized as follows:

\begin{align*}
\text{True}:\enspace
&\text{$G$:\enspace 2-step nilpotent Lie groups (Nasrin \cite{na01}), }
\\
\text{}
&\text{$G$:\enspace 3-step nilpotent Lie groups (Baklouti--Khlif \cite{BaKh05}, Yoshino \cite{xyoshino04}), }
\\
\text{False}:\enspace
&\text{$G$:\enspace 4-step nilpotent Lie groups (Yoshino \cite{xyoshino05}).}
\end{align*}

A counterexample discovered by Yoshino
 is given by 
 a triple of simply connected nilpotent Lie groups
$L \subset G \supset H$
 such that 
\begin{itemize}
\item[]
 $L \simeq {\mathbb{R}}^2$ (abelian subgroup), 
\item[]
 $X=G/H$ is a 5-dimensional nilmanifold $\simeq {\mathbb{R}}^5$, 
\end{itemize}
 with the following properties:
\par\indent
--- the $L$-action on $X=G/H$ is free;
\par\indent
--- all $L$-orbits are closed;
\par\indent
--- the orbit space $L \backslash X$ is a (T$_1$) space but not Hausdorff;
\par\indent
--- the $L$-action on $X$ is not proper.

\section{Properness Criterion}
\label{sec:proper}
In this section, 
 we present criteria for the properness of the $L$-action
 on the homogeneous space $G/H$, 
 where $L$ and $H$ are closed subgroups
 of a Lie group $G$.

As shown in Section~\ref{subsec:Lipsman}, 
 where $G$ is a simply-connected
 3-step nilpotent Lie group, 
 the (CI) property provides a convenient necessary condition
 for the properness of the action.

Here, 
 we focus on the case where $G$ is reductive.

A perspective put forward in \cite{K89} 
 concerning the properness criterion 
 for the $L$-action on $G/H$ emphasizes
 that $L$ and $H$ should be regarded symmetrically within the group $G$, 
 rather than relying on the geometric features 
 of the homogeneous space $G/H$, 
 as in some prior approaches.

To further articulate this idea,
 in Section~\ref{subsec:pitch_sim}, 
 we recall the binary relations
 $\pitchfork$ and $\sim$
 on the power set of a locally compact group $G$, 
 which were introduced in \cite{K96}
 as a conceptual framework 
 for understanding \lq\lq{the geometry at infinity}\rq\rq\
 in the group $G$ itself, 
 rather than attempting to understand
 \lq\lq{the geometry at infinity}\rq\rq\
 of the homogeneous space $G/H$.

\subsection{Expanding a Subset $H$ of a Group $G$ by Compact Set $S$}
~~~\newline
Let $H$ be a subset of a locally compact group $G$, 
 let $S \subset G$ be a compact subset.  
We define the expansion of $H$
 by $S$ through group multiplication as follows.  
\begin{align*}
S H :=&\{b x : x \in H, b \in S\}.  
\\
H S:=&\{x b : x \in H, b \in S\}, 
\\
S H S:=&\{a x b : x \in H, a, b \in S\}.  
\end{align*}

When $G$ is abelian, 
 the subsets $S H$ and $H S$ may be thought of as tubular neighbourhoods
 of $H$, 
 as well as the subset $SHS=(SS)H=H(SS)$.  
In contrast, 
 when $G$ is highly non-commutative---such as
in the case of $SL(2,{\mathbb{R}})$---the set $S H S$
 can become significant \lq\lq{larger}\rq\rq\
 in a nontrivial way.  
A deeper understanding of the structure of $S H S$ 
 allows us to reformulate
 the problem of properness of the $L$-action on $G/H$
 as a question internal to the group $G$ itself.

Here is a straightforward and fundamental observation:

\begin{lemma}
\label{lem:pitchpro}
Suppose that both $L$ and $H$ are closed subgroups of $G$.  
Then the following two conditions on the pair $(L, H)$ are equivalent:

{\rm{(i)}}\enspace
The action of $L$ on $G/H$ is proper.

{\rm{(ii)}}\enspace
For every compact subset $S \subset G$, 
 the intersection $L \cap SHS$ is compact.  
\end{lemma}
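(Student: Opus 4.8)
The plan is to pass everything through the quotient map $\pi \colon G \to G/H$, $g \mapsto gH$, and to compare the two families of subsets of $L$ that implicitly govern the two conditions: the sets $L_{S_X}=\{\gamma\in L : \gamma S_X \cap S_X \ne \emptyset\}$ for compact $S_X\subset G/H$, which control properness by Definition--Lemma~\ref{def:properaction}(iv), and the sets $L\cap SHS$ for compact $S\subset G$. The bridge between them is the elementary identity
\[
 L_{\pi(S)} = L \cap S H S^{-1},
\]
valid for every compact $S\subset G$: indeed $\gamma\in L_{\pi(S)}$ means $\gamma s_1 H = s_2 H$ for some $s_1,s_2\in S$, i.e. $\gamma \in s_2 H s_1^{-1}$, and as $s_1,s_2$ range over $S$ this traces out exactly $SHS^{-1}$. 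Since $S^{-1}$ is again compact, enlarging $S$ to the compact set $S\cup S^{-1}$ lets me move freely between $SHS^{-1}$ and $SHS$.

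For the implication (ii)$\Rightarrow$(i), I would take an arbitrary compact $S_X\subset G/H$ and lift it: because $\pi$ is open and $G$ is locally compact, each point of $S_X$ has a relatively compact open preimage-neighbourhood, so a finite subcover yields a compact $S\subset G$ with $\pi(S)\supseteq S_X$. Then $L_{S_X}\subseteq L_{\pi(S)}=L\cap SHS^{-1}\subseteq L\cap S'HS'$ with $S':=S\cup S^{-1}$ compact, and the right-hand side is compact by hypothesis (ii). Since $L_{S_X}$ is closed in $L$ --- a standard consequence of $S_X$ being compact and the action jointly continuous (a convergent net of elements carrying points of $S_X$ back into $S_X$ carries the limit point into $S_X$ as well) --- it is a closed subset of a compact set, hence compact. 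This is condition (iv) of Definition--Lemma~\ref{def:properaction}, that is, properness.

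For the converse (i)$\Rightarrow$(ii), I fix compact $S\subset G$ and take $\gamma\in L\cap SHS$, say $\gamma=ahb$ with $a,b\in S$ and $h\in H$. Then $\gamma\cdot\pi(b^{-1})=\pi(ah)=\pi(a)$, so with the compact set $T:=\pi(S)\cup\pi(S^{-1})\subset G/H$ one gets $\gamma T\cap T\ne\emptyset$, i.e. $\gamma\in L_T$. Properness (condition (iv)) makes $L_T$ compact, whence $L\cap SHS\subseteq L_T$ is relatively compact. It remains to check that $L\cap SHS$ is closed, and this reduces to the closedness of $SHS$ in $G$: by the standard fact that in a topological group the product of a compact set and a closed set is closed (in either order), applied twice to $SHS=(SH)S$ with $SH$ closed and $S$ compact, the set $SHS$ is closed, and therefore so is $L\cap SHS$. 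A closed subset of the compact set $L_T$ is compact, which gives (ii).

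The substantive points, and where I expect the genuine work to lie, are topological rather than computational: the openness of $\pi$ together with the local compactness of $G$, used to lift compact subsets of $G/H$ to compact subsets of $G$; and the closedness of $SHS$, i.e. the fact that compact--closed products are closed, proved by extracting a convergent subnet from the compact factor. Everything else is bookkeeping around the identity $L_{\pi(S)}=L\cap SHS^{-1}$ and the harmless replacement of $S$ by the symmetric compact set $S\cup S^{-1}$.
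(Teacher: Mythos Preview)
Your proof is correct and follows essentially the same route as the paper: the key identity $L_{\pi(S)}=L\cap SHS^{-1}$ together with the symmetrization $S\mapsto S\cup S^{-1}$ is exactly what the paper uses, and your lifting of compact sets from $G/H$ to $G$ is precisely the fact the paper invokes when it says every compact subset of $G/H$ has the form $SH/H$. The only difference is that you spell out the closedness of $L_{S_X}$ and of $SHS$ (via the compact-times-closed lemma), whereas the paper absorbs these into a terse ``without loss of generality, $S=S^{-1}$'' and the resulting equality $L_{\overline S}=L\cap SHS$.
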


\begin{proof}
Let $S$ be a compact subset of $G$, 
 and let $\overline S:= SH/H \subset G/H$.  
Then $\overline S$ is compact.  
Conversely, 
 every compact subset of $G/H$ can be expressed
 in this form 
 for some compact subset $S \subset G$.  
By the definition of proper actions, 
 condition (i) is equivalent to 
 the compactness of the set
\[
   L_{\overline S}:=\{\ell \in L: \ell \cdot \overline S \cap \overline S \ne \emptyset\}
\]
 for every compact subset $S \subset G$.

Without loss of generality, 
 we assume 
 that $S$ is symmetric, 
 {\it{i.e.,}} $S=S^{-1}$.  
Under this assumption, 
 one has 
\[
   L_{\overline S} = L \cap S H S^{-1} =L \cap SHS, 
\]
 which shows the equivalence (i) $\Leftrightarrow$ (ii).  
\end{proof}


\subsection{\protect$\pitchfork$ and \protect$\sim$
 for locally compact groups $G$}
\label{subsec:pitch_sim}
~~~\newline
Let ${\mathcal{P}}(G)$ denote
 the power set of a locally compact group $G$.  
We define the binary relations $\pitchfork$ and $\sim$
 on the power set ${\mathcal{P}}(G)$ of the group $G$.

\begin{definition}
[{\cite[Def.\ 2.1.1]{K96}}]
\label{def:pitch}
For two subsets $L$ and $H$ of $G$, 
 we define the following two binary relations
 $\pitchfork$ and $\sim$:
\newline
(1)\enspace
$L \pitchfork H$ if for every compact subset $S \subset G$, 
 the intersection $L \cap SHS$ is relatively compact, 
 {\it{i.e.}}, 
 its closure is compact;
\newline
(2)\enspace
$L \sim H$ if there exists a compact subset $S \subset G$
 such that both $L \subset SHS$ and $H \subset SLS$.
\end{definition}

We illustrate these definitions with simple examples:

\begin{example}
[Abelian Case]
\label{ex:pitch}
Let $G$ be a vector space $\mathbb R^n$, 
 and let $L$, $H$ be subspaces of $G$.  
\newline
{\rm{(1)}}\enspace
$L \pitchfork H$  if and only if $L \cap H = \{0\}$.  
\newline
{\rm{(2)}}\enspace
$L \sim H$  if and only if $L = H$. 
\end{example}

\begin{example}
Let $G=SL(2,{\mathbb{R}})$.  
Up to conjugation, 
 there are six connected subgroups of $G$:  
\[
  \{e\}, K, A, N, AN, \text{ and } G, 
\]
where $A$ and $N$ are as defined
 in Example~\ref{ex:R2b}, 
 and $K=SO(2)$.  
We then have
\begin{align*}
  &\{e\} \sim K, 
\\
  & A \sim N \sim A N \sim G.  
\end{align*}
\end{example}

This $SL_2$ example can be generalized
 in two directions as follows:

\begin{example}
\label{ex:similar_A_N}
Let $G$ be a real reductive linear group.  
Then the following decompositions hold:
\begin{alignat*}{2}
G=& K A K \qquad
&&\text{Cartan decomposition, \quad see \eqref{eqn:KAK}, }
\\
G=& K A N \qquad
&&\text{Iwasawa decomposition, }
\\
G=&KNK.  
&&
\end{alignat*}
Hence, 
 we have
\begin{align*}
& \{e\} \sim K, 
\\
& A \sim N \sim AN \sim G.  
\end{align*}
\end{example}

\begin{example}
Let $G$ be a real reductive Lie group
 of split rank one.  
It follows that
 for any closed subgroup $L$, 
 not necessarily connected, 
 either $L \sim \{e\}$ or $L \sim G$ holds
 ({\it{cf.}} \cite{Ko93}).  
\end{example}

\vskip 3pc

\subsection{Meaning of the Binary Relations \protect$\pitchfork$ and \protect$\sim$}
~~~\newline
In Definition~\ref{def:pitch}, 
 $L$ and $H$ are allowed to be {\textit{subsets}} of $G$, 
 without assuming that they are closed subgroups.

The following two lemmas can be verified
 directly from Definition~\ref{def:pitch}.

\begin{lemma}
\label{lem:sim_equiv}
The relation $\sim$ defines an equivalence relation on ${\mathcal{P}}(G)$;
 that is, 
 for any $L_1, L_2, L_3 \in {\mathcal{P}}(G)$, 
 the following properties hold
 in addition to the obvious reflexivity: 
\newline
{\rm{(1)}}\,{\rm{(symmetry)}}\enspace
$L_1 \sim L_2$ if and only if $L_2 \sim L_1$;
\newline
{\rm{(2)}}\,{\rm{(transitivity)}}\enspace
if $L_1 \sim L_2$ and $L_2 \sim L_3$, 
 then $L_1 \sim L_3$.  
\end{lemma}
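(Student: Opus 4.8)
The plan is to verify the two non-trivial axioms, since reflexivity is immediate: taking $S=\{e\}$ gives $L \subset \{e\}L\{e\}=L$, so $L \sim L$. For symmetry, I would simply note that the condition in Definition~\ref{def:pitch}(2) witnessing $L_1 \sim L_2$---namely, the existence of a compact $S \subset G$ with $L_1 \subset S L_2 S$ and $L_2 \subset S L_1 S$---is verbatim the condition witnessing $L_2 \sim L_1$ using the \emph{same} $S$. Thus $L_1 \sim L_2 \Leftrightarrow L_2 \sim L_1$ requires no further work; the definition is manifestly symmetric in its two arguments.

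The substance of the lemma lies in transitivity, and the idea is to compose the defining inclusions. Suppose $L_1 \sim L_2$ and $L_2 \sim L_3$, witnessed by compact sets $S$ and $T$, so that
\[
L_1 \subset S L_2 S, \quad L_2 \subset S L_1 S, \quad L_2 \subset T L_3 T, \quad L_3 \subset T L_2 T.
\]
Substituting the third inclusion into the first gives $L_1 \subset S(T L_3 T)S = (ST)\,L_3\,(TS)$, and substituting the second into the fourth gives $L_3 \subset T(S L_1 S)T = (TS)\,L_1\,(ST)$. This already exhibits $L_1$ and $L_3$ as mutual expansions of one another, but by two a priori different compact sets on the two sides.

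The final step is to package these into a single compact witness. I would set $U := ST \cup TS$ and observe that $U$ is compact: each of $ST$ and $TS$ is the image of the compact set $S \times T$ (respectively $T \times S$) under the continuous multiplication map $G \times G \to G$---here I use that $G$, being a locally compact group, is a topological group---and a finite union of compact sets is compact. Since $ST \subset U$ and $TS \subset U$, the two inclusions above yield $L_1 \subset U L_3 U$ and $L_3 \subset U L_1 U$, which is precisely the condition $L_1 \sim L_3$.

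There is no serious obstacle here; the only points requiring care are to produce a \emph{single} compact set $U$ that simultaneously controls both inclusions rather than keeping two separate sets, and to confirm that compactness is preserved under the group product. Both are handled by the observation that $ST$ and $TS$ are continuous images of compact sets and hence compact.
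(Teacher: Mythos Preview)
Your proof is correct and is precisely the direct verification from Definition~\ref{def:pitch} that the paper alludes to without spelling out; the paper simply states that this lemma ``can be verified directly from Definition~\ref{def:pitch}'' and gives no further argument. Your handling of transitivity---composing the inclusions and taking the single compact witness $U = ST \cup TS$---is exactly the standard way to unpack that remark.
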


\begin{lemma}
\label{lem:simpitch}
Let $H$, $H'$, and $L$ be subsets of a locally compact group $G$.  
\newline
{\rm{(1)}}\enspace
$L \pitchfork H$  if and only if $H \pitchfork L$.  
\newline
{\rm{(2)}}\enspace
If $H \sim H'$, 
 then the following equivalence holds for any $L$:
\[
   H \pitchfork L \iff H' \pitchfork L.  
\]
\end{lemma}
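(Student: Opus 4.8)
The plan is to derive both parts of Lemma~\ref{lem:simpitch} directly from Definition~\ref{def:pitch}, using only two elementary facts about a locally compact Hausdorff group $G$: the product of finitely many compact sets is compact, and any subset of a relatively compact set is relatively compact (its closure is a closed subset of a compact set).

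For part (1), I would first reduce to symmetric test sets, i.e.\ compact $S$ with $S=S^{-1}$. This reduction is harmless because every compact $S$ sits inside the symmetric compact set $S \cup S^{-1}$, and enlarging $S$ only enlarges $SHS$; hence the symmetric compact sets are cofinal, and checking relative compactness of $L \cap SHS$ over all symmetric $S$ is equivalent to checking it over all compact $S$. Assuming $L \pitchfork H$ and fixing a symmetric compact $S$, I would set $C:=\overline{L \cap SHS}$, which is compact by hypothesis. The key step is a \emph{transfer identity}: if $h \in H \cap SLS$, write $h=a\ell b$ with $a,b \in S$ and $\ell \in L$; then $\ell=a^{-1} h b^{-1} \in SHS$ (using $S=S^{-1}$ and $h \in H$), so $\ell \in L \cap SHS \subseteq C$, whence $h=a\ell b \in SCS$. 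This gives $H \cap SLS \subseteq SCS$, a compact set, so $H \cap SLS$ is relatively compact. As $S$ was an arbitrary symmetric compact set, $H \pitchfork L$ follows; the converse is the identical argument with the roles of $L$ and $H$ exchanged.

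For part (2), I would invoke the symmetry of $\pitchfork$ just established, so that, since $H \pitchfork L \iff L \pitchfork H$ and $H' \pitchfork L \iff L \pitchfork H'$, it suffices to prove $L \pitchfork H \Rightarrow L \pitchfork H'$ under $H \sim H'$. Unwinding $H \sim H'$, fix a compact $S_0$ with $H' \subseteq S_0 H S_0$. Given an arbitrary compact $S$, I would absorb the neighbourhoods via $S H' S \subseteq (SS_0) H (S_0 S) \subseteq S_1 H S_1$, where $S_1:=(SS_0)\cup(S_0 S)$ is compact. Then $L \cap SH'S \subseteq L \cap S_1 H S_1$, and the latter is relatively compact because $L \pitchfork H$; hence so is its subset $L \cap SH'S$. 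Since $S$ was arbitrary, $L \pitchfork H'$. The converse implication uses the other inclusion $H \subseteq S_0 H' S_0$ supplied by $H \sim H'$ in exactly the same way.

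The argument presents no serious obstacle; the only points requiring care are the cofinality reduction to symmetric $S$ in part (1) and the bookkeeping of the three-fold products $S(\cdot)S$ under non-commutative multiplication. In particular, after expanding $SH'S$ one must package the resulting left and right factors into a single compact set $S_1$, so that the inclusion matches the hypothesis $L \pitchfork H$ verbatim; this is where the proof would be most likely to read awkwardly if the factors were not combined.
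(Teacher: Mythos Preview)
Your proof is correct. The paper does not actually give a proof of this lemma; it merely states that Lemmas~\ref{lem:sim_equiv} and~\ref{lem:simpitch} ``can be verified directly from Definition~\ref{def:pitch}'' and leaves the verification to the reader. Your argument is precisely such a direct verification, and the two points you flag---the cofinality of symmetric compact sets in part~(1) and the packaging of $(SS_0)\cup(S_0S)$ into a single compact test set in part~(2)---are exactly the small bookkeeping steps one must take care of when unwinding the definitions in a non-commutative group.
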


As an immediate consequence of Lemma~\ref{lem:pitchpro}
 and the definition of $\pitchfork$
 in Definition~\ref{def:pitch}, 
 we have the following proposition.  
\begin{proposition}
Let $L$ and $H$ be closed subgroups
 of a locally compact group $G$.  
Then the relation $L \pitchfork H$ holds
 if and only if the action of $L$ on $G/H$ is proper.  
\end{proposition}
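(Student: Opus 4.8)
The plan is to observe that the two conditions are almost verbatim restatements of one another once the appropriate lemma is invoked, with a single topological subtlety to resolve. By Lemma~\ref{lem:pitchpro}, the action of $L$ on $G/H$ is proper if and only if $L \cap SHS$ is compact for every compact subset $S \subset G$. On the other hand, Definition~\ref{def:pitch}(1) states that $L \pitchfork H$ holds precisely when $L \cap SHS$ is relatively compact for every such $S$. Thus the entire content of the proposition reduces to showing that, for closed subgroups $L$ and $H$, the set $L \cap SHS$ is compact if and only if it is relatively compact. One direction is trivial, since every compact set is relatively compact; for the converse I would argue that $L \cap SHS$ is in fact closed in $G$, so that being relatively compact (having compact closure) forces it to equal its own closure and hence be compact.

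The key step---and the only place where the closedness hypotheses on $L$ and $H$ genuinely enter---is to verify that $SHS$ is a closed subset of $G$. Here I would use the standard fact that in a topological group the product of a compact set with a closed set is closed: if $C$ is compact, $F$ is closed, and $c_\alpha f_\alpha \to x$ with $c_\alpha \in C$ and $f_\alpha \in F$, then passing to a subnet along which $c_\alpha \to c \in C$ gives $f_\alpha = c_\alpha^{-1}(c_\alpha f_\alpha) \to c^{-1}x$, which lies in $F$ by closedness, whence $x = c(c^{-1}x) \in CF$. Applying this twice---first to $HS = H \cdot S$ (closed, with $H$ closed and $S$ compact), then to $SHS = S \cdot (HS)$ (closed, with $S$ compact and $HS$ closed)---shows that $SHS$ is closed. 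Since $L$ is closed, the intersection $L \cap SHS$ is then closed as well.

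Finally I would assemble the pieces: for every compact $S$, the set $L \cap SHS$ is closed, and therefore it is compact exactly when it is relatively compact. Comparing this equivalence with the characterization of properness in Lemma~\ref{lem:pitchpro} and with the definition of $\pitchfork$ in Definition~\ref{def:pitch}(1) yields the desired statement. The main obstacle is thus not conceptual but purely the verification that the compact--closed product $SHS$ is closed in $G$; once that is in hand, the result is an immediate comparison of definitions.
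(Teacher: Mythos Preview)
Your proof is correct and follows essentially the same approach as the paper, which simply records the proposition as an immediate consequence of Lemma~\ref{lem:pitchpro} and Definition~\ref{def:pitch}. You have carefully filled in the one detail the paper leaves implicit---namely, that $SHS$ is closed (via the compact--closed product argument), so that relative compactness of $L \cap SHS$ upgrades to genuine compactness.
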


Lemma~\ref{lem:simpitch} (1) clarifies
 the symmetry between the closed subgroups $L$ and $H$ in $G$
 with respect to proper actions:
\begin{proposition}
\label{prop:LH_proper}
Let $L$ and $H$ be closed subgroups 
 of a locally compact group $G$.  
Then, the action of $L$ on $G/H$ is proper
 if and only if the action of $H$ on $G/L$ is proper.  
\end{proposition}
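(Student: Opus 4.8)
The plan is to deduce Proposition~\ref{prop:LH_proper} directly from the symmetry of the binary relation $\pitchfork$ established in Lemma~\ref{lem:simpitch}~(1), combined with the characterization of properness in terms of $\pitchfork$ recorded in the proposition immediately preceding it. First I would observe that the statement is essentially a translation exercise: the hard analytic content---namely that properness of the $L$-action on $G/H$ is equivalent to the purely group-internal condition $L \pitchfork H$---has already been packaged in Lemma~\ref{lem:pitchpro} and the subsequent proposition. So the work reduces to invoking symmetry of $\pitchfork$.

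Concretely, I would argue as follows. Since $L$ and $H$ are closed subgroups of $G$, the proposition preceding the statement gives
\[
  \text{($L$ acts properly on $G/H$)} \iff L \pitchfork H.
\]
Applying the same proposition with the roles of $L$ and $H$ interchanged (both are closed subgroups, so the hypotheses are symmetric) yields
\[
  \text{($H$ acts properly on $G/L$)} \iff H \pitchfork L.
\]
By Lemma~\ref{lem:simpitch}~(1), the relations $L \pitchfork H$ and $H \pitchfork L$ are equivalent. Chaining these three equivalences gives the desired conclusion.

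The one point that genuinely requires care---and which I would flag as the only real step---is verifying that $\pitchfork$ is symmetric, i.e.\ Lemma~\ref{lem:simpitch}~(1). Unwinding Definition~\ref{def:pitch}~(1), the condition $L \pitchfork H$ asks that $L \cap SHS$ be relatively compact for every compact $S$, whereas $H \pitchfork L$ asks the same of $H \cap SLS$. The symmetry is seen by passing to inverses: taking $S$ symmetric without loss of generality (one may always enlarge $S$ to $S \cup S^{-1}$), the set $(L \cap SHS)^{-1} = L \cap SHS$ sits inside $S(H \cap SLS)S$ up to bounded translation, and relative compactness is preserved under inversion and under multiplication by the fixed compact set $S$. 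Since the excerpt already supplies Lemma~\ref{lem:simpitch}~(1) as an established fact, I would simply cite it rather than re-derive the inverse computation here.

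In summary, the proof is a short formal deduction: invoke the properness-equals-$\pitchfork$ dictionary in both directions and apply the symmetry $L \pitchfork H \iff H \pitchfork L$. There is no genuine obstacle beyond having the symmetry lemma in hand; the merit of the statement lies precisely in how transparently it follows once properness has been reformulated as the symmetric internal relation $\pitchfork$, rather than in terms of the a~priori asymmetric geometry of $G/H$ versus $G/L$.
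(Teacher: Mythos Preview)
Your proposal is correct and matches the paper's own approach exactly: the paper introduces Proposition~\ref{prop:LH_proper} with the remark that ``Lemma~\ref{lem:simpitch}~(1) clarifies the symmetry between the closed subgroups $L$ and $H$ in $G$ with respect to proper actions,'' and treats the result as immediate from that symmetry together with the preceding proposition (which in turn rests on Lemma~\ref{lem:pitchpro}). Your chain of three equivalences is precisely the intended deduction, and your side discussion of how one might verify the symmetry of $\pitchfork$ is more detail than the paper itself provides.
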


In light of Lemma~\ref{lem:simpitch} (2), 
 we can formulate the following fundamental problem 
 concerning a criterion for properness 
 in the general framework, 
 as follows:

\begin{problem}
[Properness Criterion]
\label{prob:pitchfork}
Find a criterion 
 for two subsets $L, H \subset G$
 to satisfy 
\[
L \pitchfork H, 
\]
modulo the equivalence relation $\sim$.  
\end{problem}

As we shall see in Theorem~\ref{thm:Ddual} below, 
 the equivalence relation $\sim$ on ${\mathcal{P}}(G)$
 is the coarsest equivalence relation 
 that preserves the properness condition $\pitchfork$.

\vskip 3pc

\subsection{Discontinuous Duality Theorem}
~~~\newline
For a subset $H$ of a locally compact group $G$, 
 we define its \lq\lq{discontinuous dual}\rq\rq\
 by 
\[
   \pitchfork(H:G) := \{L \in {\mathcal{P}}(G): \text{$L \pitchfork H$}\}.  
\]
The discontinuous dual $\pitchfork(H:G)$ depends solely 
 on the equivalence class of $H$ under the relation $\sim$, 
 as stated in Lemma~\ref{lem:sim_equiv}.  
Inspired by the Pontrjagin--Tannaka--Tatsuuma duality theorem {\cite{tatsu}}, 
 which roughly states 
 that a locally compact group $G$ 
 can be recovered from its unitary dual $\widehat G$, 
 the present author suggested in \cite[Thm.\ 5.6]{K96}
 a \lq\lq{discontinuous duality theorem}\rq\rq\
 formulated as follows.

\begin{theorem}
[Discontinuous Duality Theorem]
\label{thm:Ddual}
Let $G$ be a separable, locally compact topological group.  
Then any subset $H \subset G$ is determined, 
 up to the equivalence relation $\sim$, 
 by its discontinuous dual $\pitchfork(H:G)$.  
\end{theorem}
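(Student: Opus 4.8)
The plan is to prove the contrapositive of injectivity: if $H_1 \not\sim H_2$, then the two discontinuous duals $\pitchfork(H_1:G)$ and $\pitchfork(H_2:G)$ differ. The well-definedness of the assignment $[H] \mapsto \pitchfork(H:G)$ on $\sim$-classes (that is, $H_1 \sim H_2 \Rightarrow \pitchfork(H_1:G) = \pitchfork(H_2:G)$) is already recorded through Lemma~\ref{lem:simpitch}(2), so the real content is the converse. Throughout I would assume $H_1, H_2$ nonempty, the empty set being an evident exceptional class.

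First I would set up an auxiliary preorder $H \preceq H'$, meaning $H \subset SH'S$ for some compact $S \subset G$. Since $SHS$ is monotone in $S$ and compact sets are stable under finite unions, one checks $H_1 \sim H_2 \iff (H_1 \preceq H_2 \text{ and } H_2 \preceq H_1)$; hence $H_1 \not\sim H_2$ forces, after possibly interchanging indices, $H_1 \not\preceq H_2$, i.e. $H_1 \not\subset SH_2S$ for \emph{every} compact $S$. I would also use that separability together with local compactness makes $G$ $\sigma$-compact (the identity component $G^0$ is $\sigma$-compact, and a countable dense set meets only countably many of the open cosets of $G^0$). This lets me fix an increasing compact exhaustion $S_1 \subset S_2 \subset \cdots$ with $e \in S_1$, $S_n \subset \operatorname{int}(S_{n+1})$, and $\bigcup_n S_n = G$, so that every compact subset of $G$ lies in some $S_N$.

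The heart of the argument is the construction of a separating witness. Using $H_1 \not\subset S_n H_2 S_n$, I would choose $h_n \in H_1 \setminus S_n H_2 S_n$ for each $n$ and set $L := \{e\} \cup \{h_n : n \in \mathbb{N}\}$. To see $L \pitchfork H_2$: given a compact $S$, pick $N$ with $S \subset S_N$; then $SH_2S \subset S_nH_2S_n$ for all $n \ge N$, so $h_n \notin SH_2S$ once $n \ge N$, whence $L \cap SH_2S \subset \{e, h_1, \dots, h_{N-1}\}$ is finite, hence relatively compact, giving $L \in \pitchfork(H_2:G)$. To see $L \notin \pitchfork(H_1:G)$: taking $S = \{e\}$ gives $L \cap H_1 \supset \{h_n\}$ (each $h_n \in H_1$), and this set cannot be relatively compact, for otherwise its closure $C$ would be compact, so $C \subset S_n H_2 S_n$ for all large $n$ (fix $h^0 \in H_2$ and note $C \subset S_n h^0 \subset S_n H_2 S_n$ as soon as $C(h^0)^{-1} \subset S_n$ and $e \in S_n$), contradicting $h_n \notin S_n H_2 S_n$. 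Thus $\{h_n\}$ escapes every compact set and $L \not\pitchfork H_1$.

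This produces $L \in \pitchfork(H_2:G) \setminus \pitchfork(H_1:G)$ whenever $H_1 \not\preceq H_2$; the symmetric construction yields $L \in \pitchfork(H_1:G)\setminus\pitchfork(H_2:G)$ when $H_2 \not\preceq H_1$. In either case the duals are distinct, completing the contrapositive and hence the theorem. I expect the two genuinely delicate points to be the reduction from separability to $\sigma$-compactness (which is what makes the escape-to-infinity of $\{h_n\}$ meaningful) and the verification that $\{h_n\}$ is not relatively compact in the general, non-subgroup setting — precisely the step where nonemptiness of $H_2$ enters.
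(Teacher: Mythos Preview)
The paper does not actually prove Theorem~\ref{thm:Ddual}; it merely states the result and cites \cite{K96} for the reductive case and Yoshino \cite{xyoshino07} for the general separable locally compact case. So there is no in-paper proof to compare against.

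That said, your argument is correct and is the natural one (and is essentially the one in \cite{xyoshino07}): reduce $H_1 \not\sim H_2$ to $H_1 \not\preceq H_2$, exploit $\sigma$-compactness to choose $h_n \in H_1 \setminus S_n H_2 S_n$ along an exhaustion, and take $L = \{e\} \cup \{h_n\}$ as the separating witness. Both verifications --- that $L \pitchfork H_2$ (only finitely many $h_n$ can lie in any fixed $SH_2S$) and that $L \not\pitchfork H_1$ (the sequence $\{h_n\}$ cannot be relatively compact, else it would eventually be absorbed by $S_n H_2 S_n$) --- are sound.

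Two minor remarks. Your reduction from separability to $\sigma$-compactness via the identity component $G^0$ works for locally compact groups, but a cleaner route avoids $G^0$ entirely: pick a compact neighborhood $\overline{V}$ of $e$ and a countable dense set $\{d_n\}$; then $G = \bigcup_n d_n \overline{V}$ directly. And you are right to flag the empty set: since $\pitchfork(\emptyset:G) = \pitchfork(\{e\}:G) = \mathcal{P}(G)$ while $\emptyset \not\sim \{e\}$, the theorem as literally stated requires $H$ nonempty --- a harmless convention in the paper's context, where $H$ is typically a closed subgroup.
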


Theorem~\ref{thm:Ddual} was first proved 
 for real reductive Lie groups $G$ in \cite{K96}, 
 and was later extended 
 to general locally compact groups 
 by Yoshino \cite{xyoshino07}.

\vskip 3pc


\subsection{Properness Criterion for Reductive Groups}
~~~\newline
It is worth emphasizing 
 that by the term \lq\lq{criterion}\rq\rq, 
 we mean an {\emph{explicit and effective}} method 
 for determining 
 whether the relation $L \pitchfork H$ holds---not merely
 a theoretically correct
 but practically intractable reformulation.  
In this context, 
 Problem~\ref{prob:pitchfork}
 remains open
 for general Lie groups.  
However, 
in the case 
 where $G$ is a real reductive Lie group, 
 the problem has been resolved, 
 as reviewed 
 in Theorem~\ref{thm:proper96} below.

Let $G$ be a real reductive group, 
 ${\mathfrak{g}}={\mathfrak{k}}+{\mathfrak{p}}$ 
 a Cartan decomposition
 of its Lie algebra, 
 and $K$ the maximal compact subgroup of $G$
 with Lie algebra ${\mathfrak{k}}$.  
We take a maximal abelian subspace ${\mathfrak{a}}$ in ${\mathfrak{p}}$.

For $\alpha \in {\mathfrak{a}}^{\ast}$, 
 we define the root space
 and the set of (restricted) roots by 
\[
   {\mathfrak{g}}_{\alpha}:=
 \{X \in {\mathfrak{g}}:
\text{$\operatorname{ad}(H) X = \alpha (H)$ for all $H \in {\mathfrak{a}}$} \}, 
\]
\[
  \Sigma({\mathfrak{g}}, {\mathfrak{a}})
  :=\{\alpha \in {\mathfrak{a}}^{\ast}:
      {\mathfrak{g}}_{\alpha} \ne \{0\}
  \}.  
\]
Then the finite group
\[
  N_G({\mathfrak{a}})/Z_G({\mathfrak{a}})
  =
  \{g \in G: \operatorname{Ad}(g) {\mathfrak{a}}={\mathfrak{a}}\}
  \big/
  \{g \in G: \operatorname{Ad}(g)|_{\mathfrak{a}}=\operatorname{id}\}
\]
 is isomorphic to the Weyl group, 
 to be denoted by $W$, 
 of the restricted root system
 $\Sigma(\mathfrak{g},\mathfrak{a})$.

We fix a set of positive roots $\Sigma^+({\mathfrak{g}}, {\mathfrak{a}})$, 
 and define the (closed) dominant Weyl chamber
 $\overline{{\mathfrak{a}}_+}$
 by 
\[
   \overline{{\mathfrak{a}}_+}
   :=\{H \in {\mathfrak{a}}:\alpha(H)\ge 0\text{ for all $\alpha \in \Sigma^+({\mathfrak{g}}, {\mathfrak{a}})$}\}.  
\]
Then we have a natural bijection 
\begin{equation}
\label{qn:aWquotient}
\overline{{\mathfrak{a}}_+} \simeq {\mathfrak{a}}/W.  
\end{equation}
We set
 $A:=\exp({\mathfrak{a}})$
 and $\overline{A_+}:=\exp(\overline{{\mathfrak{a}}_+})$.

In analogy with polar coordinates
 in the Euclidean space ${\mathbb{R}}^n$, 
 there exists a notion of polar coordinates
 on the Riemannian symmetric space $G/K$.  
In group-theoretic terms, 
 this corresponds to the fact 
 that a real reductive Lie group $G$ admits a Cartan decomposition:

\begin{equation}
\label{eqn:KAK}
G = K A K = K \overline{A_+}K.  
\end{equation}

In this decomposition, 
 every $g \in G$ can be written
 as $g \in K \exp (\mu(g)) K$, 
 where $\mu(g) \in {\mathfrak{a}}$ is unique up to conjugation
 by the Weyl group $W$.

The Cartan decomposition \eqref{eqn:KAK} defines the {\emph{Cartan projection}}:
\begin{equation}
\label{eqn:Cartanpr}
   \mu \colon G \to {\mathfrak{a}}/W \simeq \overline{{\mathfrak{a}}_+}, \quad
   g \mapsto H \mod W, 
\end{equation}
characterized by the condition that $g \in K \exp(H) K$.

\begin{example}
\label{ex:GLCartan}
Let $G=GL(n,{\mathbb{R}})$ and $K=O(n)$.  
Then we can identify 
\[
 \text{${\mathfrak{a}}\simeq {\mathbb{R}}^n$, 
 $W\simeq {\mathfrak{S}}_n$, 
 and $\overline{{\mathfrak{a}}_+} \simeq {\mathbb{R}}_{\ge}^n 
:=\{(H_1, \dots, H_n): H_1 \ge \cdots \ge H_n\}$.  
}
\]
If $g=k_1 \operatorname{diag}(e^{H_1}, \dots, e^{H_n}) k_2$
 for some $k_1, k_2 \in O(n)$
 and $H_1, \dots, H_n \in {\mathbb{R}}$, 
 then 
\[
\text{${}^{t\!} g g = {}^{t\!}k_2 \operatorname{diag}(e^{2H_1}, \dots, e^{2H_n})k_2$. }
\]
Hence, 
 the Cartan projection 
\[
\text{$\mu \colon GL(n,\mathbb{R}) \to \mathbb R^n/{\mathfrak{S}}_n \simeq {\mathbb{R}}_{\ge}^n$}
\]
 is given by 
\[
 g \mapsto \tfrac 1 2 (\log \lambda_1,\cdots, \log \lambda_n),  
\]
where $\lambda_1 \ge \dots \ge \lambda_n\,(>0)$ are the eigenvalues of ${}^tgg$.\end{example}

The following properness criterion was established
 by Benoist \cite[Thm.\ 5.2]{B96}
 and the present author \cite[Thm.\ 1.1]{K96}, 
 extending the criterion given in \cite{K89}
 for the special case 
 where $L$ and $H$ are reductive subgroups 
 (see Theorem~\ref{thm:proper89}).  

\begin{theorem}
[Properness Criterion]
\label{thm:proper96}
Let $G$ be a reductive Lie group, 
 and let $H$, $L$ be subsets of $G$.  
Then the following equivalences hold:

\begin{tabular}{llll}
{\rm{(1)}} & 
$L \sim H$ in $G$ & 
$\Longleftrightarrow$ & 
$\mu(L) \sim \mu(H)$ in $\mathfrak a$. \\
{\rm{(2)}} &
$L \pitchfork H$ in $G$ & 
$\Longleftrightarrow$ & 
$\mu(L) \pitchfork \mu(H)$ in $\mathfrak a$. 
\end{tabular}
\end{theorem}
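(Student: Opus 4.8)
The plan is to transport both relations along the Cartan projection $\mu$, reducing everything to the abelian vector space $\mathfrak a$ by exploiting the coarse-geometric fact that $\mu$ turns multiplication by compact sets into addition of bounded sets. The crux is the following \emph{coarse additivity}: for every compact $S\subset G$ there is $R=R(S)>0$ with $\mu(SPS)\subseteq\mu(P)+B_R$ for every $P\subseteq G$, where $B_R\subset\mathfrak a$ denotes the ball of radius $R$. I expect this estimate to be the main obstacle, and I would establish it by fixing a faithful representation $G\hookrightarrow GL(N,\mathbb R)$ in which, as in Example~\ref{ex:GLCartan}, $\exp\mu(g)$ records the sorted singular values of $g$. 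Submultiplicativity of singular values under left and right multiplication, together with $\sigma_i(g)\le\sigma_i(s_1 g s_2)\,\|s_1^{-1}\|\,\|s_2^{-1}\|$, yields $|\log\sigma_i(s_1 g s_2)-\log\sigma_i(g)|\le\log(\|s_1\|\,\|s_2\|\,\|s_1^{-1}\|\,\|s_2^{-1}\|)$, which is uniformly bounded over $s_1,s_2\in S$; since $\mu$ records the \emph{sorted} logarithms, no Weyl-group ambiguity arises.

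Two further ingredients are needed, both elementary. The first is a \emph{lifting} statement: if $\mu(\ell)=\mu(h)+c$ with $\|c\|\le r$, then $\ell\in S_r\, h\, S_r$ for the compact set $S_r:=\bigl(K\exp(\overline{B_r})K\bigr)\cup K$. This is where abelianness of $\mathfrak a$ is decisive, since commutativity gives $\exp\mu(\ell)=\exp(c)\exp\mu(h)$, and writing $\ell=k_1\exp\mu(\ell)k_2$, $h=k_3\exp\mu(h)k_4$ with $k_i\in K$ produces $\ell=(k_1\exp(c)k_3^{-1})\,h\,(k_4^{-1}k_2)$; consequently $\mu(P)\subseteq\mu(Q)+B_r$ forces $P\subseteq S_r Q S_r$. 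The second is that $\mu$ is a \emph{proper} map, because $\mu^{-1}(Q)=K\exp(Q)K$ is the continuous image of the compact set $K\times Q\times K$ for any compact $Q\subset\overline{\mathfrak a_+}$.

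With these in hand the four implications follow by routine combination. For $(1)\!\Rightarrow$, coarse additivity applied to $L\subseteq SHS$ and $H\subseteq SLS$ gives $\mu(L)\subseteq\mu(H)+B_R$ and $\mu(H)\subseteq\mu(L)+B_R$, that is $\mu(L)\sim\mu(H)$; for $(1)\!\Leftarrow$, the lifting statement applied symmetrically produces a single compact $S_r$ with $L\subseteq S_r H S_r$ and $H\subseteq S_r L S_r$. For $(2)\!\Rightarrow$, given bounded $T\subset\mathfrak a$ the lifting statement realises $\{\ell\in L:\mu(\ell)\in\mu(H)+T\}$ inside $L\cap S_r H S_r$, which is relatively compact by $L\pitchfork H$, so its $\mu$-image $\mu(L)\cap(\mu(H)+T)$ is bounded. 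For $(2)\!\Leftarrow$, coarse additivity gives $\mu(L\cap SHS)\subseteq\mu(L)\cap(\mu(H)+B_{R(S)})$, bounded by $\mu(L)\pitchfork\mu(H)$, and properness then forces $L\cap SHS$ to be relatively compact, i.e.\ $L\pitchfork H$. The one genuinely substantial step is the coarse additivity estimate; everything else is bookkeeping resting on the abelianness of $\mathfrak a$ and the compactness of $K$.
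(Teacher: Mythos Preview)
Your proposal is correct, and in fact your argument via coarse additivity of the Cartan projection is essentially the route taken in the references \cite{K96} and \cite{B96} that the paper cites for the general theorem. The paper itself does \emph{not} prove Theorem~\ref{thm:proper96} in full; it only outlines the special case where $L$ and $H$ are reductive subgroups (Theorem~\ref{thm:proper89}), using a genuinely different method inherited from \cite{K89}: one first reduces via $L\sim A_L$, $H\sim A_H$ to connected abelian subgroups of $A$, and then (Lemma~\ref{lem:pro89}) argues by contradiction, extracting from a non-proper configuration a sequence $\exp(t_nY_n)=c_n\exp(t_n'Z_n)d_n$ and analysing its asymptotics through the root-space decomposition and the adjoint action, ultimately forcing $Y=Z$ in $\overline{\mathfrak a_+}$. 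That dynamical argument is tailored to subgroups and gives geometric insight into the ``shape at infinity'' via \eqref{eqn:bYaZ}, but does not directly handle arbitrary subsets $L,H\subset G$. Your coarse-geometric approach---bounding $\|\mu(s_1gs_2)-\mu(g)\|$ uniformly via singular values, together with the elementary lifting $\mu(\ell)-\mu(h)\in B_r\Rightarrow \ell\in(K\exp B_r K)\,h\,K$---handles the general statement in one stroke and makes the symmetry between $\sim$ and $\pitchfork$ transparent. One small point of care: for $G\subsetneq GL(N,\mathbb R)$ the sorted log singular values give $\mu_{GL(N)}(g)$, not literally $\mu_G(g)$; you need the (easy) observation that under a $\theta$-compatible embedding the map $\overline{(\mathfrak a_G)_+}\to\overline{(\mathfrak a_{GL(N)})_+}$ is a piecewise-linear injection, hence bi-Lipschitz, so the $GL(N)$ estimate transfers to $G$.
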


\begin{example}
Let $G=SL(2,{\mathbb{R}})$, $L:=A$.  
Then the Cartan projection
 $\mu \colon {\mathfrak{g}} \to {\mathfrak{a}}/W \simeq \overline{{\mathfrak{a}}_+}$
 gives
\[
   \mu(A)=\mu(N)=\overline{{\mathfrak{a}}_+}.  
\]
Hence, $A \not \pitchfork N$.  
We have seen this directly
 in Example~\ref{ex:R2b}, 
 which describes the ${\mathbb{R}}$-action
 on ${\mathbb{R}}^2 \setminus \{(0,0)\}$.  
\end{example}

It is worth noting 
 that, 
 in the equivalences in Theorem~\ref{thm:proper96}, 
 the left-hand sides are formulated
 in the non-commutative group $G$,
 whereas the right-hand sides
 are described 
  in the abelian space ${\mathfrak{a}}$.  
As a consequence, 
 the conditions $\pitchfork$ and $\sim$ can be verified
 on the right-hand sides, 
 once the Cartan projections $\mu(L)$ and $\mu(H)$ are known.

Special cases of Theorem~\ref{thm:proper96} include the following:
\begin{itemize}
\item[]
\begin{itemize}
\item[$\Rightarrow$ in (1): ]
Uniform error estimates of eigenvalues under matrix perturbation.
\item[$\Leftrightarrow$ in (2): ]
A criterion for proper actions of groups.  
\end{itemize}
\end{itemize}

Moreover, 
 in connection with the criterion for $\pitchfork$
 in (2), 
 we discuss
  in Section~\ref{sec:quantify}
 two approaches for quantifying them.

\begin{remark}
[$\sim$ and $\pitchfork$ in ${\mathfrak{a}}/W \simeq \overline{{\mathfrak{a}}_+}$]
\label{rem:25050507}
The Cartan projections $\mu(L)$ and $\mu(H)$, 
 given in \eqref{eqn:Cartanpr}, 
 can be interpreted
 in two ways:
 as subsets of $\overline{\mathfrak{a}_+}$
 or as $W$-invariant subsets of $\mathfrak{a}$.  
In either interpretation, 
 the relations $\sim$ and $\pitchfork$ between $\mu(L)$ and $\mu(H)$
 retain the same meaning.  
In fact, 
 for two subsets $S$, $T$ $\subset \overline{{\mathfrak{a}}_+}$,
 define their $W$-invariant extensions of ${\mathfrak{a}}$
 by 
\[
  \widetilde S:= W \cdot S, \quad
  \widetilde T:= W \cdot T.  
\]
Then it is readily seen from the definitions 
that the following equivalences hold:
\begin{align*}
  S \sim T &\Leftrightarrow \widetilde S \sim \widetilde T, 
\\
  S \pitchfork T &\Leftrightarrow \widetilde S \pitchfork \widetilde T.  
\end{align*}
\end{remark}

\subsection
{Properness Criterion---Special Case (Reductive Subgroups)}
~~~\newline
In this section, 
 we illustrate the idea
 behind the proof of the properness criterion
 (Theorem~\ref{thm:proper96})
 in the special case
 where $L$ and $H$ are reductive subgroups of $G$.

Since the properness criterion 
 is invariant under conjugation 
of $L$ and $H$, 
 we may, 
 without loss of generality, 
 assume that both are stable 
 under a Cartan involution $\theta$ of $G$.

To treat $L$ and $H$ in a uniform manner, 
 we introduce a $\theta$-stable subgroup $G'$, 
 and set up the corresponding notation.

Let ${\mathfrak{g}}={\mathfrak{k}}+{\mathfrak{p}}$
 be the Cartan decomposition
 corresponding to the Cartan involution $\theta$.  
Since $G'$ is $\theta$-stable, 
 its Lie algebra ${\mathfrak{g}}'$ admits a compatible Cartan decomposition
 ${\mathfrak{g}}'={\mathfrak{k}}'+{\mathfrak{p}}'$
 with ${\mathfrak{k}}' \subset {\mathfrak{k}}$
 and ${\mathfrak{p}}' \subset {\mathfrak{p}}$.  
Let ${\mathfrak{a}}_{G'}$ be a maximal abelian subspace 
 in ${\mathfrak{p}}'$, 
 which we extend to a maximal abelian subspace ${\mathfrak{a}}$
 in ${\mathfrak{p}}$.  
Then ${\mathfrak{a}}_{G'}={\mathfrak{a}} \cap {\mathfrak{g}}'$.  
We summarize these subspaces as follows.

\begin{alignat}{56}
&{\mathfrak{g}}=
&&{\mathfrak{k}}\,\,\,+\,\,
&&{\mathfrak{p}}
&&\supset\,\,
&&{\mathfrak{p}}
\underset{\text{\BLUE{\footnotesize{maximal abelian}}}}{\supset}
&&{\mathfrak{a}}
\notag
\\
&\cup
&&\cup
&&\cup
&&
&&\cup
&&\cup
\notag
\\
\label{eqn:aG}
&{\mathfrak{g}}'=
&&{\mathfrak{k}}'\,\,+\,\,
&&{\mathfrak{p}}'
&&\supset\,\,
&&{\mathfrak{p}}'
\underset{\text{\BLUE{\footnotesize{maximal abelian}}}}{\supset}
&&{\mathfrak{a}}_{G'}
:={\mathfrak{a}} \cap {\mathfrak{g}}'.  
\end{alignat}

Let $A_{G'}:=\exp({\mathfrak{a}}_{G'})$.  
By the Cartan decomposition, 
 we have $G' \sim A_{G'}$ in $G'$
 (see Example~\ref{ex:similar_A_N}), 
 and hence also $G' \sim A_{G'}$ in $G$.  
The Cartan projection of $G'$ takes the form
 $\mu(G')=W \cdot {\mathfrak{a}}_{G'}$
 in ${\mathfrak{a}}$.

Applying the above notation
 to $G'=L$, 
 we have ${\mathfrak{a}}_{L} = {\mathfrak{a}} \cap {\mathfrak{l}}$.  
By conjugating $H$ by an element of $K$, 
 we may assume ${\mathfrak{a}}_{H}:={\mathfrak{a}} \cap {\mathfrak{h}}$ is
 also a maximal abelian subspace of ${\mathfrak{h}} \cap {\mathfrak{p}}$.  
Then, 
 the condition $\mu(L) \pitchfork \mu(H)$ in ${\mathfrak{a}}$, 
 as stated in Theorem~\ref{thm:proper96} (2), 
 is equivalent 
 to the condition
\[
   {\mathfrak{a}}_H \cap W \cdot {\mathfrak{a}}_L=\{0\}.  
\] 
Hence, 
 the special case of Theorem~\ref{thm:proper96} yields the following result:

\begin{theorem}
[Properness Criterion for Reductive Subgroups {\cite{K89}}]
\label{thm:proper89}
~~~\newline
Let $G$ be a real reductive Lie group, 
 and let $H$ and $L$ be two reductive subgroups of $G$.  
Then the following conditions on the pair $(L, H)$ are equivalent:
\newline
{\rm{(i)}}\enspace
the action of $L$ on $G/H$ is proper;
\newline
{\rm{(i)}}$'$\enspace
the action of $H$ on $G/L$ is proper;
\newline
{\rm{(ii)}}\enspace
${\mathfrak{a}}_H \cap W \cdot {\mathfrak{a}}_L =\{0\}$
 in ${\mathfrak{a}}$.  
\end{theorem}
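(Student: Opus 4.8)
The plan is to derive Theorem~\ref{thm:proper89} as a direct specialization of the general criterion Theorem~\ref{thm:proper96}, using the reduction to the abelian space $\mathfrak a$ prepared in \eqref{eqn:aG} together with the built-in symmetry of properness. First I would dispose of the equivalence (i)~$\Leftrightarrow$~(i)$'$: this is precisely Proposition~\ref{prop:LH_proper}, which says that properness of the $L$-action on $G/H$ is symmetric in the pair $(L,H)$. It therefore remains to prove (i)~$\Leftrightarrow$~(ii). By Lemma~\ref{lem:pitchpro} (equivalently, by Definition~\ref{def:pitch}), condition (i) is the statement $L \pitchfork H$ in $G$, and Theorem~\ref{thm:proper96}(2) converts this into $\mu(L) \pitchfork \mu(H)$ in $\mathfrak a$. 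After conjugating $L$ and $H$ to be $\theta$-stable and conjugating $H$ by an element of $K$ as in \eqref{eqn:aG}, one has $\mathfrak a_L = \mathfrak a \cap \mathfrak l$ and $\mathfrak a_H = \mathfrak a \cap \mathfrak h$, and, as recorded in the discussion around \eqref{eqn:aG}, the Cartan decomposition of a reductive subgroup yields the $W$-invariant descriptions $\mu(L) = W\cdot \mathfrak a_L$ and $\mu(H) = W \cdot \mathfrak a_H$. Thus the whole question reduces to deciding when $W\mathfrak a_L \pitchfork W\mathfrak a_H$ holds in $\mathfrak a$.

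The one genuinely non-formal step, which I expect to be the main obstacle, is the evaluation of $\pitchfork$ for these two sets, since they are finite \emph{unions} of linear subspaces rather than single subspaces; Example~\ref{ex:pitch} covers only the subspace case. I would establish the needed extension: for finite unions of subspaces $P = \bigcup_i P_i$ and $Q = \bigcup_j Q_j$ of $\mathfrak a$ one has $P \pitchfork Q \iff P \cap Q = \{0\}$. The hypothesis $P \cap Q = \{0\}$ forces $P_i \cap Q_j = \{0\}$ for every pair $(i,j)$; for each such pair the linear map $(v,q)\mapsto v-q$ is injective on the external sum $P_i \oplus Q_j$, so by equivalence of norms there is a constant $c>0$ with $|v-q| \ge c\,|v|$ for all $v \in P_i$, $q \in Q_j$. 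Hence if $v \in P_i \cap SQ_jS$, writing $v = s_1 + q + s_2$ with $s_1,s_2 \in S$ gives $|v-q| = |s_1+s_2|$ bounded, so $|v|$ is bounded; taking the union over the finitely many $(i,j)$ shows $P \cap SQS$ is relatively compact, i.e. $P \pitchfork Q$. Conversely a nonzero $v \in P \cap Q$ yields the unbounded ray ${\mathbb{R}}_{\ge 0} v \subset P \cap Q \subset P \cap SQS$ with $S=\{0\}$, so $P \not\pitchfork Q$. Applying this with $P = W\mathfrak a_L$ and $Q = W\mathfrak a_H$ reduces the problem to the intersection condition $(W\mathfrak a_L) \cap (W\mathfrak a_H) = \{0\}$.

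Finally I would rewrite this symmetric intersection condition in the one-sided form of (ii). Since $(W\mathfrak a_L)\cap(W\mathfrak a_H)=\{0\}$ is equivalent to $w\mathfrak a_L \cap w'\mathfrak a_H = \{0\}$ for all $w,w'\in W$, applying $w^{-1}$ and letting $u=w^{-1}w'$ range over $W$ shows it is equivalent to $\mathfrak a_L \cap u\mathfrak a_H = \{0\}$ for every $u \in W$, that is to $\mathfrak a_L \cap W\mathfrak a_H = \{0\}$; replacing $u$ by $u^{-1}$ turns this into the stated condition $\mathfrak a_H \cap W\cdot \mathfrak a_L = \{0\}$. This closes the chain (i)~$\Leftrightarrow$~$L\pitchfork H$~$\Leftrightarrow$~$\mu(L)\pitchfork\mu(H)$~$\Leftrightarrow$~(ii), and, with (i)~$\Leftrightarrow$~(i)$'$ already in hand, proves the theorem. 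Apart from the union-of-subspaces lemma and the short $W$-orbit bookkeeping of the last paragraph, every step is a formal application of results already established, so I anticipate no further difficulty.
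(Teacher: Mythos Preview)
Your proposal is correct and matches the paper's own derivation: the paragraphs immediately preceding Theorem~\ref{thm:proper89} deduce it from Theorem~\ref{thm:proper96}(2) exactly as you do, by computing $\mu(L)=W\cdot\mathfrak a_L$ and $\mu(H)=W\cdot\mathfrak a_H$ and observing that $\mu(L)\pitchfork\mu(H)$ in $\mathfrak a$ reduces to $\mathfrak a_H\cap W\cdot\mathfrak a_L=\{0\}$, while (i)~$\Leftrightarrow$~(i)$'$ is Proposition~\ref{prop:LH_proper}. You have simply made explicit the finite-union-of-subspaces step that the paper leaves to the reader. It is worth knowing, however, that the paper then devotes the next two subsections to sketching the \emph{original} self-contained argument from \cite{K89}: one uses $L\sim A_L$ and $H\sim A_H$ to reduce to Lemma~\ref{lem:pro89}, and proves that abelian case directly by a root-space analysis of sequences escaping to infinity in $G$. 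Historically Theorem~\ref{thm:proper89} precedes and underlies Theorem~\ref{thm:proper96}, so your route is logically sound within the paper but runs the history backwards.
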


\subsection{Reduction of Properness Criterion to Abelian subgroups}
~~~\newline
We now outline the key idea from \cite{K89}
 that underlies the proof of Theorem~\ref{thm:proper89}.  
Although the proof of Theorem~\ref{thm:proper96}
 in the general case
 is more involved, 
 it is still based 
 on the same principle.

First, 
 we observe 
 that $L \sim A_L$ and $H \sim A_H$ in $G$.  
Hence, 
 the properness criterion in Theorem~\ref{thm:proper89}
 in the reductive case
 can be reduced to the abelian case, 
 where $L, H \subset A$.  
This reflects the fact
 that the ambient group $G$ itself 
 is highly non-commutative.  
This reduction is formulated as Lemma~\ref{lem:pro89} below.

\begin{lemma}
\label{lem:pro89}
Suppose that $L$ and $H$ are connected subgroups
 of the split abelian subgroup $A$.  
Let ${\mathfrak{l}}, {\mathfrak{h}} \subset {\mathfrak{a}}$
 be the (abelian) Lie algebras of $L$ and $H$, 
 respectively.  
Then the following are equivalent:
\newline
{\rm{(i)}}\enspace
The action of $L$ on $G/H$ is proper.  
\newline
{\rm{(ii)}}\enspace
 ${\mathfrak{l}} \cap W{\mathfrak{h}}=\{0\}$.  
\end{lemma}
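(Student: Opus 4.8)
The plan is to route the whole statement through the Cartan projection and Theorem~\ref{thm:proper96}, reducing properness in $G$ to a purely linear-algebraic condition on finite unions of subspaces of $\mathfrak{a}$.

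First I would identify the subgroups and compute their Cartan projections. Since $L$ and $H$ are connected subgroups of the vector group $A=\exp(\mathfrak{a})$, they are of the form $L=\exp(\mathfrak{l})$ and $H=\exp(\mathfrak{h})$ for linear subspaces $\mathfrak{l},\mathfrak{h}\subset\mathfrak{a}$, and in particular they are closed. For $X\in\mathfrak{a}$, pick $w\in W$ with $wX\in\overline{\mathfrak{a}_+}$ and a representative $k_w\in K$ with $\operatorname{Ad}(k_w)|_{\mathfrak{a}}=w$; then $\exp(X)=k_w^{-1}\exp(wX)k_w\in K\exp(wX)K$, so $\mu(\exp X)=wX$. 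Reading the Cartan projection as a $W$-invariant subset of $\mathfrak{a}$ in the sense of Remark~\ref{rem:25050507}, this yields $\mu(L)=W\cdot\mathfrak{l}$ and $\mu(H)=W\cdot\mathfrak{h}$.

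Next I would invoke the properness machinery. By Lemma~\ref{lem:pitchpro} together with Definition~\ref{def:pitch}, condition (i) is equivalent to $L\pitchfork H$ in $G$; by Theorem~\ref{thm:proper96}(2) and Remark~\ref{rem:25050507} this is in turn equivalent to $W\mathfrak{l}\pitchfork W\mathfrak{h}$ in the abelian space $\mathfrak{a}$. It then remains to evaluate $\pitchfork$ for these two finite unions of subspaces. For this I use two elementary facts. First, $\pitchfork$ distributes over finite unions in each argument: additively $S+(B_1\cup B_2)+S=(S+B_1+S)\cup(S+B_2+S)$, so $A\cap\bigl(S+(B_1\cup B_2)+S\bigr)$ is relatively compact for all compact $S$ iff both $A\cap(S+B_i+S)$ are, giving $A\pitchfork(B_1\cup B_2)\iff(A\pitchfork B_1\text{ and }A\pitchfork B_2)$, and symmetrically in the first argument by Lemma~\ref{lem:simpitch}(1). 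Hence $W\mathfrak{l}\pitchfork W\mathfrak{h}$ holds iff $w\mathfrak{l}\pitchfork w'\mathfrak{h}$ for every $w,w'\in W$. Second, by the abelian case (Example~\ref{ex:pitch}(1)), each $w\mathfrak{l}\pitchfork w'\mathfrak{h}$ is equivalent to $w\mathfrak{l}\cap w'\mathfrak{h}=\{0\}$. Translating by $w^{-1}$ and writing $w''=w^{-1}w'$, the family $\{w\mathfrak{l}\cap w'\mathfrak{h}=\{0\}:w,w'\in W\}$ is equivalent to $\{\mathfrak{l}\cap w''\mathfrak{h}=\{0\}:w''\in W\}$, that is, to $\mathfrak{l}\cap W\mathfrak{h}=\{0\}$, which is precisely (ii). Chaining the equivalences yields (i)$\iff$(ii).

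I expect the only delicate point to be the bookkeeping in the Cartan-projection step: verifying that $\mu(\exp\mathfrak{l})$, read as a $W$-invariant subset of $\mathfrak{a}$, is exactly $W\cdot\mathfrak{l}$ and not a proper subset, and handling the passage from subspaces to their $W$-translates cleanly. Once Theorem~\ref{thm:proper96}, Remark~\ref{rem:25050507}, and Example~\ref{ex:pitch} are in hand, everything downstream is routine, the distributivity of $\pitchfork$ over finite unions being the only additional observation required.
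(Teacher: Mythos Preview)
Your argument is formally valid but circular within the paper's logical structure. The whole purpose of Lemma~\ref{lem:pro89} here is to serve as the core technical step underlying the properness criteria: the paper introduces it explicitly as ``the key idea from \cite{K89} that underlies the proof of Theorem~\ref{thm:proper89},'' and remarks that the proof of Theorem~\ref{thm:proper96} ``is still based on the same principle.'' By invoking Theorem~\ref{thm:proper96}(2) to prove Lemma~\ref{lem:pro89}, you are assuming precisely what the lemma is meant to establish. Historically as well, Lemma~\ref{lem:pro89} (from \cite{K89}) predates and underpins Theorem~\ref{thm:proper96} (from \cite{K96}, \cite{B96}), so citing the latter to derive the former inverts the dependency.

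The paper's own proof is a direct argument for the hard direction (ii)~$\Rightarrow$~(i), by contrapositive. Assuming $L \not\pitchfork H$ in $G$, one extracts sequences $\exp(t_n Y_n) = c_n \exp(t_n' Z_n) d_n$ with $t_n \to \infty$, unit vectors $Y_n \to Y \in \mathfrak{l}$, $Z_n \to Z \in \mathfrak{h}$, and $c_n, d_n$ confined to a fixed compact set. After normalizing so that $t_n' = t_n$ and $Y, Z \in \overline{\mathfrak{a}_+}$ (absorbing a Weyl group element into $S$), the crux is to show $Y = Z$. This is achieved by tracking how $\operatorname{Ad}(c_n)$ moves root spaces: passing to the limit forces $\operatorname{Ad}(c)\mathfrak{g}_\alpha \subset \bigoplus_{\beta(Y) \ge \alpha(Z)} \mathfrak{g}_\beta$ for every root $\alpha$, and together with the symmetric inclusion this yields $Y = Z$, hence a nonzero element of $\mathfrak{l} \cap W\mathfrak{h}$. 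That root-space dynamics is exactly the ``geometry at infinity of $G$'' that Theorem~\ref{thm:proper96} later encapsulates; your shortcut, while tidy, bypasses the substance the lemma is meant to exhibit.
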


The implication (i) $\Longrightarrow$ (ii) is the easier direction, 
 that is, 
 properness implies the (CI) property, 
 as seen in Proposition~\ref{prop:118} (3).

The converse implication (ii) $\Longrightarrow$ (i) is more involved.  

In the next section, 
 we will give an overview of the proof.

\subsection{Proof of Lemma~\ref{lem:pro89} for Abelian Subgroups $H$, $L \subset G$}
~~~\newline
Suppose that both ${\mathfrak{l}}$ and ${\mathfrak{h}}$ are subspaces
 of ${\mathfrak{a}}$.  
We aim to prove
 that if $L \not \pitchfork H$, 
 then ${\mathfrak{l}} \cap W{\mathfrak{h}} \ne \{0\}$.

If $L \not \pitchfork H$, 
 then there exists a compact subset $S \subset G$
 such that the intersection $L \cap S H S$ is non-compact.  
Hence, 
 one can find sequences $t_n$, $t_n' \in {\mathbb{R}}$, 
 $Y_n \in {\mathfrak{l}}$, 
 $Z_n \in {\mathfrak{h}}$
 with $\|Y_n\|=\|Z_n\|=1$, 
 and $c_n, d_n \in S$
such that
\begin{align*}
\exp(t_n Y_n)
=\,& c_n \exp (t_n' Z_n) d_n
\quad\text{ in $G$}, 
\\
\lim_{n \to \infty}t_n=\,&\infty.  
\end{align*}

By passing to a subsequence, 
 we may assume
 that the sequences $c_n$, $d_n$, $Y_n$, and $Z_n$
 converge
 as $n$ tends to infinity, 
 say, 
\begin{align*}
&c_n \to c,\,\, d_n \to d \text{ in }S
\\
&Y_n \to Y \ne 0 \in {\mathfrak{l}},\quad
 Z_n \to Z \ne 0 \in {\mathfrak{h}}.  
\end{align*}
\par\noindent
{\bf{Step 1.}}\enspace
We show
 that the sequence $\frac{t_n'}{t_n}$ is bounded
 away from $0$.

Once this boundedness is established, 
 we may again pass to a subsequence
 and assume
 that $\frac{t_n'}{t_n}$ converges.  
By replacing $(S, t_n', Z_n, c_n, d_n)$
 with $(KSK, t_n, \frac{t_n'}{t_n} Z_n, c_n k, k^{-1} d_n)$
 for some $k \in N_K({\mathfrak{a}})$,  
 we may further assume that $t_n'= t_n$
 and $Y, Z \in \overline{\mathfrak{a}_+}$.  
Thus, 
 we obtain sequences
 such that $\underset{n\to \infty} \lim t_n=\infty$
 and:
\begin{align}
\label{eqn:ctYd}
&c_n=\exp (t_n Y_n) d_n^{-1} \exp(-t_n Z_n),
\\
\notag
&c_n \to c,\,\, d_n \to d \text{ in }G,
\\
\notag
&Y_n \to Y \in {\mathfrak{l}} \cap \overline{\mathfrak{a}_+},\,\,
 Z_n \to Z \in {\mathfrak{h}} \cap \overline{\mathfrak{a}_+}.  
\end{align}
\par\noindent
{\bf{Step 2.}}
We now derive that $Y=Z$ from \eqref{eqn:ctYd}.

Both Steps 1 and 2 deal with the behavior of sequences
 \lq\lq{at infinity}\rq\rq\
 in the group $G$.  
To analyze the \lq\lq{geometry at infinity}\rq\rq\
 of the group $G$, 
 we localize the analysis 
 by examining the dynamics
 in terms of the root space decomposition 
${\mathfrak{g}}= \underset{\alpha \in \Sigma({\mathfrak{g}};{\mathfrak{a}}) \cup\{0\}}{\bigoplus} {\mathfrak{g}}_{\alpha}
$
 via the adjoint representation.

To illustrate the method applied 
 in both Steps, 
 we consider the following identity, 
 which plays a crucial role
 in establishing $Y=Z$ in Step 2:
\begin{equation}
\label{eqn:bYaZ}
   \operatorname{Ad}(c) {\mathfrak{g}}_{\alpha} 
 = \sum_{\beta(Y) \ge \alpha(Z)} {\mathfrak{g}}_{\beta}
\end{equation}
 for any $\alpha \in \Sigma({\mathfrak{g}}, {\mathfrak{a}})$.  
To verify \eqref{eqn:bYaZ}, 
 let $\operatorname{pr}_{\alpha} \colon {\mathfrak{g}} \to {\mathfrak{g}}_{\alpha}$
 denote the projection
 associated with the root space decomposition.  
{}From \eqref{eqn:ctYd}, 
 it follows that for any $\alpha \in \Sigma({\mathfrak{g}}, {\mathfrak{a}})$, 
\begin{align*}
\operatorname{Ad}(c) {\mathfrak{g}}_{\alpha}
=&
\lim_{n\to \infty} 
\bigoplus_{\beta\in \Sigma({\mathfrak{g}}, {\mathfrak{a}}) \cup \{0\}} 
e^{t_n
(\beta(Y_n) 
-\alpha(Z_n))}
\operatorname{pr}_{\beta}(\operatorname{Ad}(d_n^{-1}) {\mathfrak{g}}_{\alpha})
\\
\subset &\bigoplus_{\beta(Y) \ge \alpha(Z)}{\mathfrak{g}}_{\beta}.  
\end{align*}
The opposite inclusion follows similarly, 
 and thus \eqref{eqn:bYaZ} holds.  
This argument implies $Y=Z$, 
 hence ${\mathfrak{l}} \cap {\mathfrak {h}} \ne \{0\}$.  
See \cite{K89} for further details.  
\qed

\subsection{Criterion for the Calabi--Markus Phenomenon}
~~~\newline
The Calabi--Markus phenomenon (Theorem~\ref{thm:CM}), 
 originally discovered in \cite{CM}
 in the context of the de Sitter space, 
 can be formulated in a more general setting as follows.  

\begin{corollary}
[Criterion for the Calabi--Markus Phenomenon {\cite{K89}}]
\label{cor:CMcri}
Let $G \supset H$ be a pair of real reductive Lie groups.  
Then the following four conditions (i)---(iv) are equivalent:
\newline
{\rm{(i)}}\enspace
$G/H$ admits a discontinuous group $\Gamma \simeq {\mathbb{Z}}$.  
\newline
{\rm{(ii)}}\enspace
$G/H$ admits an infinite discontinuous group $\Gamma$.  
\newline
{\rm{(iii)}}\enspace
$G \not \sim H$. 
\newline
{\rm{(iv)}}\enspace
$\operatorname{rank}_{\mathbb{R}} G>\operatorname{rank}_{\mathbb{R}}H$.  
\end{corollary}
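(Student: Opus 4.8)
The plan is to prove the four-fold equivalence through the cycle $(\mathrm{i})\Rightarrow(\mathrm{ii})\Rightarrow(\mathrm{iii})\Rightarrow(\mathrm{i})$, together with the separate equivalence $(\mathrm{iii})\Leftrightarrow(\mathrm{iv})$, pushing everything down to the combinatorics of Cartan projections via Theorem~\ref{thm:proper96}. I begin with $(\mathrm{iii})\Leftrightarrow(\mathrm{iv})$. After conjugating $H$ by an element of $K$ so that $\mathfrak{a}_H:=\mathfrak{a}\cap\mathfrak{h}$ is maximal abelian in $\mathfrak{h}\cap\mathfrak{p}$, one has $\mu(H)=W\cdot\mathfrak{a}_H$ with $\dim\mathfrak{a}_H=\operatorname{rank}_{\mathbb{R}}H$, while $\mu(G)=\mathfrak{a}$ viewed as a $W$-invariant subset. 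By Theorem~\ref{thm:proper96}~(1), $G\sim H$ in $G$ is equivalent to $\mathfrak{a}\sim W\cdot\mathfrak{a}_H$ in $\mathfrak{a}$; since $W\cdot\mathfrak{a}_H\subset\mathfrak{a}$, this in turn amounts to $W\cdot\mathfrak{a}_H$ being \emph{cobounded} in $\mathfrak{a}$. The key elementary fact is that a finite union of linear subspaces of a Euclidean space is cobounded if and only if one of them is the whole space: if every $w\cdot\mathfrak{a}_H$ is proper, pick a unit vector $u$ in the open dense complement, set $\delta:=\min_{w\in W}\operatorname{dist}(u,\,w\cdot\mathfrak{a}_H)>0$, and observe $\operatorname{dist}(tu,\,W\cdot\mathfrak{a}_H)=t\delta\to\infty$. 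Hence $G\sim H$ holds iff $\mathfrak{a}_H=\mathfrak{a}$, i.e.\ $\operatorname{rank}_{\mathbb{R}}H=\operatorname{rank}_{\mathbb{R}}G$; negating and using $H\subset G$ yields $(\mathrm{iii})\Leftrightarrow(\mathrm{iv})$.

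Next, $(\mathrm{i})\Rightarrow(\mathrm{ii})$ is immediate, as $\mathbb{Z}$ is infinite. For $(\mathrm{ii})\Rightarrow(\mathrm{iii})$ I argue by contraposition. Suppose $G\sim H$. By Lemma~\ref{lem:simpitch} the relation $\pitchfork$ is symmetric and depends on each argument only through its $\sim$-class, so for any subset $L$ one has $L\pitchfork H\Leftrightarrow L\pitchfork G$. Now $L\pitchfork G$ means $L\cap SGS$ is relatively compact for every compact $S$; since $SGS=G$ for nonempty $S$, this says $\overline{L}$ is compact. Any discontinuous group $\Gamma$ for $G/H$ acts properly, hence $\Gamma\pitchfork H$, so $\overline{\Gamma}$ is compact; being discrete, $\Gamma$ is then finite. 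Thus $G\sim H$ rules out infinite discontinuous groups, which is exactly the contrapositive of $(\mathrm{ii})\Rightarrow(\mathrm{iii})$.

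It remains to prove $(\mathrm{iii})\Rightarrow(\mathrm{i})$, the constructive step. By $(\mathrm{iii})\Leftrightarrow(\mathrm{iv})$ we have $\dim\mathfrak{a}>\dim\mathfrak{a}_H$, so $W\cdot\mathfrak{a}_H$ is a finite union of proper subspaces and we may choose $Y\in\mathfrak{a}\setminus W\cdot\mathfrak{a}_H$ (automatically $Y\ne 0$). Setting $\mathfrak{l}:=\mathbb{R}Y$, the condition $\mathfrak{a}_H\cap W\cdot\mathfrak{l}=\{0\}$ is equivalent to $Y\notin W\cdot\mathfrak{a}_H$, so Theorem~\ref{thm:proper89} gives that $L:=\exp(\mathbb{R}Y)$ acts properly on $G/H$. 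Put $\Gamma:=\exp(\mathbb{Z}Y)$; because $\exp\colon\mathfrak{a}\to A$ is a diffeomorphism onto a vector group, $\Gamma\simeq\mathbb{Z}$ is a genuine discrete subgroup, and properness passes from $L$ to the closed subgroup $\Gamma$ by monotonicity of $\pitchfork$ in its first argument, so $\Gamma$ acts properly discontinuously. Finally, by Proposition~\ref{prop:118}~(3) each stabilizer $\Gamma_x$ is compact, hence finite since $\Gamma$ is discrete, hence trivial since $\Gamma\simeq\mathbb{Z}$ is torsion-free; the action is therefore free, and $\Gamma$ is a discontinuous group for $G/H$ isomorphic to $\mathbb{Z}$.

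The main obstacle is the coboundedness lemma underpinning $(\mathrm{iii})\Leftrightarrow(\mathrm{iv})$: converting the group-theoretic relation $G\sim H$ into the clean rank inequality requires the geometric input that a finite union of proper linear subspaces cannot approximate all of $\mathfrak{a}$ at bounded distance. A secondary point of care, in $(\mathrm{iii})\Rightarrow(\mathrm{i})$, is to extract a genuinely \emph{discrete} cyclic subgroup from the one-parameter group $\exp(\mathbb{R}Y)$ and to upgrade properness to freeness via torsion-freeness, rather than merely producing a proper $\mathbb{R}$-action.
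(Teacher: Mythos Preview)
Your proof is correct and follows essentially the same route as the paper's: the cycle of implications via Lemma~\ref{lem:simpitch} and the construction of a one-parameter subgroup in $A$ avoiding $W\cdot\mathfrak{a}_H$ via Theorem~\ref{thm:proper89}. The only organizational difference is that the paper proves the chain $(\mathrm{i})\Rightarrow(\mathrm{ii})\Rightarrow(\mathrm{iii})\Rightarrow(\mathrm{iv})\Rightarrow(\mathrm{i})$ and thus needs only the easy direction of $(\mathrm{iii})\Leftrightarrow(\mathrm{iv})$ (equal ranks force $\mathfrak{a}_H=\mathfrak{a}$, hence $G\sim H$), whereas you prove $(\mathrm{iii})\Leftrightarrow(\mathrm{iv})$ directly with the coboundedness lemma; you also make explicit the freeness of the $\mathbb{Z}$-action via torsion-freeness, which the paper leaves implicit.
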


The original result by Calabi and Markus
 in \cite{CM}
 shows that condition (ii) fails to hold
 when $(G, H)=(O(n,1), O(n-1,1))$.  
In this case, 
we observe 
 $\operatorname{rank}_{\mathbb{R}} G = \operatorname{rank}_{\mathbb{R}}H=1$, 
 and consequently condition (iv) fails as well.

\begin{proof}
The implication (i) $\Rightarrow$ (ii) is immediate.  
\newline
(ii) $\Rightarrow$ (iii).\enspace
Suppose that (ii) holds.  
Then $\Gamma \pitchfork H$.  
However, 
 since $\Gamma \not \pitchfork G$, 
 Lemma~\ref{lem:simpitch} (2) implies
 that $H \not \sim G$.  
Thus, 
 (ii) $\Rightarrow$ (iii) is verified.  
\newline
(iii) $\Rightarrow$ (iv).\enspace
Without loss of generality, 
 assume
 that ${\mathfrak{a}}_H \subset {\mathfrak{a}}$
 as in \eqref{eqn:aG}.  
Since 
$
   \operatorname{rank}_{\mathbb{R}} H= \dim {\mathfrak{a}}_H
$, 
 the assumption $\operatorname{rank}_{\mathbb{R}}G = \operatorname{rank}_{\mathbb{R}}H$ implies 
 $\mu(H)={\mathfrak{a}}$.  
Hence, 
 by the easier direction
 of Theorem~\ref{thm:proper89}, 
 we have $G \sim H$.  
This completes the proof of (iii) $\Rightarrow$ (iv).  
\newline
(iv) $\Rightarrow$ (i).\enspace
This is the main step.  
If $
   \operatorname{rank}_{\mathbb{R}} G > \operatorname{rank}_{\mathbb{R}}H
$, 
 {\it{i.e.}}, 
 if ${\mathfrak{a}}_H \subsetneqq {\mathfrak{a}}$, 
 then there exists a one-dimensional subspace ${\mathfrak{l}}$
 in ${\mathfrak{a}}$
 such that $W \cdot {\mathfrak{a}}_H \cap {\mathfrak{l}} =\{0\}$.  
By the properness criterion 
in Theorem~\ref{thm:proper89}, 
 the subgroup $L:= \exp {\mathfrak{l}}$ acts properly on $G/H$.  
In particular, 
 any lattice in $L$, isomorphic to ${\mathbb{Z}}$ acts 
 properly discontinuously on $G/H$.  
\end{proof}

\subsection{Proper Actions of $SL(2,{\mathbb{R}})$}
~~~\par
In the previous section, 
 we discussed proper actions
 of \emph{commutative} subgroups
 on homogeneous spaces.  
Here, we turn to the case of \emph{non-commutative} subgroups, 
 illustrating the discussion 
 with the example of $SL(2,{\mathbb{R}})$
 or $PSL(2,{\mathbb{R}})$.

\begin{proposition}
\label{prop:surface_gp}
Let $G$ be a real reductive linear Lie group, 
 and let $H$ be a closed subgroup
 (possibly non-reductive, {\it{e.g.}}, a discrete subgroup).  
Consider the following five conditions:
\newline
{\rm{(i)}}\enspace
$G/H$ admits a discontinuous group 
 $\Gamma \simeq {\mathbb{Z}}$
 generated
 by a unipotent element.  
\newline
{\rm{(ii)}}\enspace
$G/H$ admits a proper action of a subgroup $L$ 
 which is locally isomorphic to $SL(2,{\mathbb{R}})$.  
\newline
{\rm{(iii)}}\enspace
For any $g \ge 2$, 
 $G/H$ admits a discontinuous group $\Gamma$ 
 isomorphic to $\pi_1(\Sigma_g)$, 
 where $\Sigma_g$ is a closed oriented surface 
 of genus $g$. 
\newline
{\rm{(iv)}}\enspace
For some $g \ge 2$, 
$G/H$ admits a discontinuous group 
 $\Gamma\simeq \pi_1(\Sigma_g)$.  
\newline
{\rm{(v)}}\enspace
$G/H$ admits a discontinuous group $\Gamma$
 of infinite order, 
 which is not virtually abelian, 
 {\it{i.e.,}}
 $\Gamma$ does not contain an abelian subgroup of finite index.  

\par
Then the following implications and equivalences hold:
\[
(i) \Leftrightarrow (ii) \Rightarrow (iii) \Rightarrow (iv) \Rightarrow (v).  
\]
\end{proposition}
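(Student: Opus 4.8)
The plan is to establish the chain by treating $(i)\Leftrightarrow(ii)$ as the substantial equivalence, deriving $(ii)\Rightarrow(iii)$ geometrically via uniformization, and observing that $(iii)\Rightarrow(iv)\Rightarrow(v)$ are essentially formal.

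First I would prove $(i)\Leftrightarrow(ii)$. Given a unipotent generator $\gamma$ of $\Gamma\simeq{\mathbb{Z}}$, write $\gamma=\exp N$ with $N\in{\mathfrak{g}}$ nilpotent, and invoke the Jacobson--Morozov theorem to embed $N$ in an ${\mathfrak{sl}}_2$-triple $(H_0,E,F)$ with $E=N$. Using the $\theta$-adapted normalization of such a triple, I may arrange that the neutral element $H_0$ lies in $\overline{{\mathfrak{a}}_+}$ and that the associated subgroup $L$, locally isomorphic to $SL(2,{\mathbb{R}})$, is $\theta$-stable with ${\mathfrak{a}}_L={\mathbb{R}}H_0\subset{\mathfrak{a}}$. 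The engine of the equivalence is the properness criterion Theorem~\ref{thm:proper96}(2), which reduces both $\Gamma\pitchfork H$ and $L\pitchfork H$ to the corresponding relations between Cartan projections in ${\mathfrak{a}}$. Thus it suffices to show
\[
  \mu(\Gamma)\sim\mu(L)\quad\text{in }{\mathfrak{a}},
\]
for then Lemma~\ref{lem:simpitch}(2)---the $\sim$-invariance of $\pitchfork$, applied in the abelian group ${\mathfrak{a}}$ as in Remark~\ref{rem:25050507}---gives $\mu(\Gamma)\pitchfork\mu(H)\Leftrightarrow\mu(L)\pitchfork\mu(H)$, and hence $\Gamma\pitchfork H\Leftrightarrow L\pitchfork H$. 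Translating back, $\Gamma$ acts properly discontinuously on $G/H$ if and only if $L$ acts properly; in the $(ii)\Rightarrow(i)$ direction the freeness of $\Gamma$ then follows because $\Gamma$ is torsion-free while its isotropy groups $\Gamma\cap L_x$ are finite by Proposition~\ref{prop:118}(3).

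The heart of the matter---and the step I expect to be the main obstacle---is the Cartan-projection comparison $\mu(\Gamma)\sim\mu(L)$. Since $L$ is $\theta$-stable with split part ${\mathbb{R}}H_0$, its Cartan projection fills the ray $\mu(L)={\mathbb{R}}_{\ge0}H_0$. On the other hand, the representation theory of the triple $(H_0,E,F)$ shows that, inside $L\cong SL(2,{\mathbb{R}})$, the projection $\mu(\exp(nE))$ is asymptotic as $|n|\to\infty$ to a positive multiple of $(\log|n|)\,H_0$, with an $O(1)$ error; compatibility of the Cartan decompositions (${\mathfrak{a}}_L\subset{\mathfrak{a}}$, $K_L\subset K$) means this computation is inherited verbatim in $G$. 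Hence $\mu(\Gamma)=\{\mu(\gamma^n):n\in{\mathbb{Z}}\}$ is a sequence marching to infinity along the ray ${\mathbb{R}}_{\ge0}H_0$ whose consecutive gaps, being of order $\log(n+1)-\log n$, shrink to $0$. Consequently $\mu(\Gamma)$ lies in a bounded neighbourhood of $\mu(L)$ and, conversely, $\mu(L)$ lies in a bounded neighbourhood of $\mu(\Gamma)$, which is precisely $\mu(\Gamma)\sim\mu(L)$. The care required here is quantitative: one must make the $O(1)$ term and the bound on the gaps uniform, which is exactly what the explicit $SL(2,{\mathbb{R}})$ singular-value estimate (in the spirit of Example~\ref{ex:GLCartan}) provides.

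Next I would prove $(ii)\Rightarrow(iii)$. Fix $g\ge2$. By the Uniformization Theorem (Example~\ref{ex:Sg}) the surface group $\pi_1(\Sigma_g)$ embeds as a cocompact lattice in $PSL(2,{\mathbb{R}})$, hence as a discrete subgroup $\Gamma$ of the subgroup $L$ locally isomorphic to $SL(2,{\mathbb{R}})$ furnished by $(ii)$. Since $L$ acts properly on $G/H$ and $\Gamma$ is discrete in $L$, the $\Gamma$-action is properly discontinuous; it is also free, because each $L_x$ is compact by Proposition~\ref{prop:118}(3), so $\Gamma\cap L_x$ is a finite subgroup of the torsion-free group $\pi_1(\Sigma_g)$ and hence trivial. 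Thus $\Gamma\simeq\pi_1(\Sigma_g)$ is a discontinuous group for $G/H$, giving $(iii)$. Finally, $(iii)\Rightarrow(iv)$ is the trivial specialization to $g=2$, and $(iv)\Rightarrow(v)$ holds because surface groups of genus $g\ge2$ are infinite and contain non-abelian free subgroups, hence are not virtually abelian. This completes the chain.
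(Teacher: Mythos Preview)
Your argument is correct and tracks the paper's outline: Jacobson--Morozov for $(i)\Leftrightarrow(ii)$, embedding surface groups in $L$ for $(ii)\Rightarrow(iii)$, and trivialities for $(iii)\Rightarrow(iv)\Rightarrow(v)$. The one place you work harder than necessary is the explicit Cartan-projection estimate $\mu(\Gamma)\sim\mu(L)$ via the logarithmic gap bound. A shorter route, using only the $\sim$ calculus already set up in the paper, is to observe that $\Gamma$ is a cocompact lattice in the one-parameter unipotent group $N_L\subset L$, so $\Gamma\sim N_L$; and Example~\ref{ex:similar_A_N} applied to the reductive group $L$ gives $N_L\sim A_L\sim L$ inside $L$, hence also inside $G$ (the relation $\sim$ survives enlarging the ambient group). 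Thus $\Gamma\sim L$ in $G$, and Lemma~\ref{lem:simpitch}(2) yields $\Gamma\pitchfork H\Leftrightarrow L\pitchfork H$ directly, bypassing Theorem~\ref{thm:proper96} and the quantitative step you flagged as the main obstacle. This is presumably what the paper's one-line citation of Jacobson--Morozov intends; your version has the merit of making the mechanism fully explicit.
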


\begin{proof}
The equivalence (i) $\Leftrightarrow$ (ii)
 ({\it{cf.}} \cite[Lem.\ 3.2]{Ko93})
 follows from the Jacobson--Morozov theorem.

Since any surface group can be embedded
 as a discrete subgroup 
 of $PSL(2,{\mathbb{R}})$, 
 and also of $SL(2,{\mathbb{R}})$,
 the implication (ii) $\Rightarrow$ (iii) follows.  

The remaining implications (iii) $\Rightarrow$ (iv) $\Rightarrow$ (v)
 are straightforward.  
\end{proof}

\subsection{An Example: Actions of $SL(2,{\mathbb{R}})$ on $SL(n,{\mathbb{R}})/SL(m,{\mathbb{R}})$}
~~~\par
In the previous section, 
 we discussed general properties
 of non-abelian groups
such as surface groups
 and $SL(2,{\mathbb{R}})$
 on homogeneous spaces.  
In this section, 
 we examine properness of $SL(2,{\mathbb{R}})$-actions
 more concretely through an explicit example.  
Specifically, 
 we consider the action of $SL(2,{\mathbb{R}})$
 on the homogeneous space $G/H$
 via a group homomorphism
\[
 \varphi \colon SL(2,{\mathbb{R}}) \to G.  
\]
There are, 
in fact, many such homomorphisms $\varphi$, 
and the properness of the induced action generally 
 depends one the choice of $\varphi$.  
We illustrate this dependence with the case, 
 where $G=SL(n,{\mathbb{R}})$
 and $H=SL(m,{\mathbb{R}})$ is the subgroup
 embedded block-diagonally in $G$ with $m<n$.  

\begin{question}
Suppose that 
 $\varphi_n \colon SL(2,{\mathbb{R}}) \to SL(n,{\mathbb{R}})$
 is an irreducible representation.  
Is the action of $SL(2,{\mathbb{R}})$ on $G/H$
 via $\varphi_n$ proper?
\end{question}

See also Example~\ref{ex:4.7} (1) below
 for a related discussion
 on the existence problem
 of cocompact discontinuous groups
 for the same homogeneous space $G/H$.

\vskip 1pc
Let $L:=\varphi_n(SL(2,{\mathbb{R}}))$.  
To apply Theorem~\ref{thm:proper89}, 
 we compute $\mu(H)$ and $\mu(L)$.  
We define a maximal abelian subspace ${\mathfrak{a}}$
 by the diagonal embedding
\[
{\mathfrak{a}}:=\{(a_1, \cdots, a_n):\sum_{j=1}^n a_j=0\}
\underset{\operatorname{diag}}{\hookrightarrow} 
{\mathfrak{g}}= {\mathfrak{s l}}(n,{\mathbb{R}}).  
\]
Then the Cartan projection is given by 
$
   \mu \colon G \to {\mathfrak{a}}/{\mathfrak{S}}_n
$
for $G=SL(n,{\mathbb{R}})$.

For $H=SL(m,{\mathbb{R}})$ in $G=SL(n,{\mathbb{R}})$\,\, ($m<n$), 
\[
\mu(H)={\mathfrak{S}}_n \cdot {\mathfrak{a}}_H
={\mathfrak{S}}_n
 \cdot \{(b_1, \cdots,b_m,0,\cdots, 0):\sum_{j=1}^{m}b_j=0\}.  
\]
On the other hand, 
 for the irreducible representation $\varphi_n$, 
 we have 
\[
  \mu(L)={\mathfrak{S}}_n \cdot {\mathfrak{a}}_L
        ={\mathfrak{S}}_n \cdot {\mathbb{R}} (n-1,n-3,\cdots,1-n).  
\]

By the properness criterion in Theorem~\ref{thm:proper89}, 
 we have the equivalences.

\begin{tabular}{lll}
 $L$ acts properly on $G/H$
& $\iff$
 $\mu(L) \cap \mu(H)=\{0\}$
\\
&$\iff$
 $n$ is even
 or $n-m \ge 2$.  
\\
\end{tabular}

\vskip 3pc
More generally, 
 one may ask the following question:
\begin{question}
Given a homomorphism 
 $\varphi \colon SL(2,{\mathbb{R}}) \to SL(n,{\mathbb{R}})$, 
determine whether the induced action of $SL(2,{\mathbb{R}})$
 on $SL(n,{\mathbb{R}})/SL(m,{\mathbb{R}})$, 
 is proper.  
\end{question}

According to the Dynkin--Kostant theory, 
 the set of conjugacy classes $\operatorname{Hom}(SL(2,{\mathbb{R}}), G)/G$
 is finite 
 for any reductive Lie group $G$.  
For $G=SL(n,{\mathbb{R}})$, 
 there exists a one-to-one correspondence 
 between these conjugacy classes 
 and the set ${\mathbb{P}}(n)$ of all partitions of $n$:
\begin{equation}
\label{eqn:DK}
 \operatorname{Hom}(SL(2,{\mathbb{R}}), G)/G \simeq {\mathbb{P}}(n).  
\end{equation}
More explicitly, 
 any homomorphism $\varphi \colon SL(2,{\mathbb{R}}) \to G$
 is conjugate to a direct sum of the form 
\[
   \underset{j=1}{\overset n \bigoplus} (\overbrace{\varphi_j \oplus \cdots \oplus \varphi_j}^{m_j}), 
\]
 where $\varphi_j$ denotes
 the irreducible $j$-dimensional real representation 
 of $SL(2,{\mathbb{R}})$, 
 and $m_j$ $(\in {\mathbb{N}})$ is the (possibly zero) multiplicity, 
 satisfying $\underset{j=1}{\overset n\sum} j m_j =n$.  
Let $L:=\varphi(SL(2,{\mathbb{R}}))$.  
After conjugating $L$
 if necessary, 
 and using the convention of \eqref{eqn:aG}, 
 we obtain  
\[
  {\mathfrak{a}}_L= {\mathbb{R}}
         (\underset{j=1}{\overset n \bigoplus} (\overbrace{v_j \oplus \cdots \oplus v_j}^{m_j})), 
\]
where $v_j:=(j-1, j-3, \dots, 1-j) \in {\mathbb{Z}}^j$.  
Applying the properness criterion
 in Theorem~\ref{thm:proper89} 
 to the pair $(L, H)=(\varphi(SL(2,{\mathbb{R}})), SL(m,{\mathbb{R}}))$, 
 we conclude
 that the action of $SL(2,{\mathbb{R}})$ on $G/H$ is proper
 if and only if:
\[
   \sum_{j:\text{odd}} j m_j < n-m.  
\]

\subsection{Properly Discontinuous Actions of Surface Groups}
~~~\newline
The previous example $G/H=SL(n,{\mathbb{R}})/SL(m,{\mathbb{R}})$ is 
 a non-symmetric homogeneous space.  
When $G/H$ is a {\emph{reductive symmetric space}}, 
 Okuda \cite{O13}
 provided a complete classification of such spaces
 that admit proper actions 
 of $SL(2,{\mathbb{R}})$
 via a homomorphism 
 $\varphi \colon SL(2,{\mathbb{R}})\to G$.  
His classification relies on the properness criterion
 (Theorem~\ref{thm:proper89})
 along with the Dynkin--Kostant theory 
 of nilpotent orbits, 
 as given in \eqref{eqn:DK}.  
Using this classification, 
 he further established the following result:

\begin{theorem}
[Okuda \cite{O13}]
\label{thm:okuda}
Let $G/H$ be a reductive symmetric space.  
Then the five conditions (i)---(v) in Proposition~\ref{prop:surface_gp} are equivalent.  
\end{theorem}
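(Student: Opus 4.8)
The plan is to close the cycle of implications. Proposition~\ref{prop:surface_gp} already supplies $(i)\Leftrightarrow(ii)\Rightarrow(iii)\Rightarrow(iv)\Rightarrow(v)$ for every real reductive linear group $G$, so it suffices to prove the single reverse implication $(v)\Rightarrow(ii)$ under the symmetry hypothesis on $G/H$. I would argue by contraposition: assuming that no subgroup locally isomorphic to $SL(2,\mathbb{R})$ acts properly on $G/H$, I would show that every discontinuous group $\Gamma$ is finite or virtually abelian, which negates $(v)$.

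First I would transport everything into the Cartan subspace $\mathfrak{a}$. By Theorem~\ref{thm:proper96} together with Remark~\ref{rem:25050507}, a discrete $\Gamma$ is a discontinuous group for $G/H$ precisely when $\mu(\Gamma)\pitchfork W\!\cdot\!\mathfrak{a}_H$; and for a homomorphism $\varphi\colon SL(2,\mathbb{R})\to G$ with image $L$, the $L$-action is proper exactly when $\mathbb{R}H_\varphi\cap(W\!\cdot\!\mathfrak{a}_H)=\{0\}$, where $H_\varphi\in\mathfrak{a}$ is the semisimple characteristic of the nilpotent orbit attached to $\varphi$ under \eqref{eqn:DK}. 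Hence the failure of $(ii)$ says exactly that each of the finitely many characteristics $H_\varphi$ lies in $W\!\cdot\!\mathfrak{a}_H$. This is the combinatorial shadow of Okuda's classification, which via the Dynkin--Kostant correspondence lists precisely those symmetric pairs for which some $H_\varphi$ avoids $W\!\cdot\!\mathfrak{a}_H$.

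The heart of the matter is to forbid non-virtually-abelian proper actions once no characteristic is admissible. I would first note that if $\Gamma$ is infinite and not virtually abelian, then the identity component $M^{0}$ of its Zariski closure is non-compact and non-abelian, and its non-compact semisimple part supplies a subgroup locally isomorphic to $SL(2,\mathbb{R})$; moreover, by Benoist's theory the limit cone $\mathcal{L}(\Gamma)$ is a nontrivial cone which, by properness, satisfies $\mathcal{L}(\Gamma)\cap(W\!\cdot\!\mathfrak{a}_H)=\{0\}$. The delicate point---and the step I expect to be the main obstacle---is that admissibility is \emph{not} read off from the dimension of $\mathcal{L}(\Gamma)$: a proper $\mathbb{Z}^{k}$ has a $k$-dimensional limit cone yet is abelian, while a surface group inside a proper $SL(2,\mathbb{R})$ has a one-dimensional limit cone yet is not virtually abelian. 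What one must instead show is that the admissible cone attached to a non-abelian $\Gamma$ necessarily contains, up to $W$, one of the distinguished nilpotent characteristics $H_\varphi$. It is here that Okuda's classification becomes indispensable: one matches the limit cone of $\Gamma$---equivalently, the characteristics of the $SL(2,\mathbb{R})$-subgroups furnished by $M^{0}$---against the explicit list of admissible $H_\varphi$, and verifies in each remaining type that a non-abelian configuration avoiding $W\!\cdot\!\mathfrak{a}_H$ always produces an admissible characteristic, contradicting the failure of $(ii)$.

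The forward direction requires no extra work: whenever the classification exhibits an admissible $H_\varphi$, condition $(ii)$ holds by construction, and the chain of Proposition~\ref{prop:surface_gp} propagates it to $(i)$ and $(iii)$--$(v)$. Combining this with the contrapositive established above yields the equivalence of all five conditions.
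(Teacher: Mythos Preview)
Your overall scaffold---close the cycle by proving $(v)\Rightarrow(ii)$, reduce to $\mathfrak{a}$ via Theorem~\ref{thm:proper96}, and invoke Okuda's classification through the Dynkin--Kostant parametrization~\eqref{eqn:DK}---matches what the paper sketches (the paper itself gives no proof, only the attribution to \cite{O13} together with the remark that the result rests on that classification). But the argument you supply for the contrapositive has a genuine gap.

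You assert that ``by properness, $\mathcal{L}(\Gamma)\cap(W\!\cdot\!\mathfrak{a}_H)=\{0\}$''. This conflates properness with \emph{sharpness}. The paper itself stresses, just after Definition~\ref{def:sharp2}, that the implication ``proper $\Rightarrow$ sharp'' fails in general: one can have $\mu(\Gamma)\pitchfork W\!\cdot\!\mathfrak{a}_H$ in $\mathfrak{a}$ (so $\Gamma$ is a discontinuous group for $G/H$) while the limit cone $\mu(\Gamma)\infty$ still meets $W\!\cdot\!\mathfrak{a}_H$ nontrivially---the Cartan projection may approach $W\!\cdot\!\mathfrak{a}_H$ sublinearly without ever staying in a tube around it. Hence the limit-cone hypothesis you feed into the case analysis is not available from properness alone, and Benoist's theory does not supply it.

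There is a second, related gap. Even granting that the Zariski closure $M^{0}$ of a non-virtually-abelian $\Gamma$ has non-compact semisimple part, the $SL(2,\mathbb{R})$-subgroups of $M^{0}$ need not act properly on $G/H$, because $M^{0}$ itself need not: proper discontinuity of $\Gamma$ does not pass to its Zariski closure (this is exactly the phenomenon of non-standard quotients noted after Conjecture~\ref{conj:G1}). So you have not actually produced a characteristic $H_\varphi$ avoiding $W\!\cdot\!\mathfrak{a}_H$. Okuda's route in \cite{O13} is instead to work directly with the combinatorics of Satake diagrams and weighted Dynkin diagrams: for each irreducible symmetric pair he determines the structure of $W\!\cdot\!\mathfrak{a}_H$ inside $\overline{\mathfrak{a}_+}$ and checks, type by type, that whenever the complement admits a direction coming from a non-virtually-abelian discrete group it already contains some nilpotent characteristic. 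That case-by-case verification is the content you are gesturing at but not supplying, and it cannot be shortcut by the limit-cone step as written.
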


For a pair of real reductive Lie groups
 $G \supset H$
 that does {\emph{not}} form a symmetric pair, 
 the implication (v) $\Rightarrow$ (ii) 
 in Proposition~\ref{prop:surface_gp}
 does not necessarily hold.

\medskip

\subsection{Solvable Case}
~~~\newline
So far, 
 we have primarily discussed proper actions
 (or properly discontinuous actions)
 on homogeneous spaces $G/H$
 in the setting 
 where $G$ is a reductive Lie group.  
In contrast, 
 when $G/H$ is a simply connected {\emph{solvable}} homogeneous space, 
 the Calabi--Markus phenomenon does {\emph{not}} occur.  
In fact, the following theorem is based on a structural result
 on solvable Lie groups 
 due to Chevalley
 \cite{Ch41}.

\begin{theorem}
[{\cite[Thm.\ 2.2]{Ko93}}]
Suppose that $G$ is a solvable Lie group
 and $H$ is a proper closed subgroup of $G$.  
Then there exists a discrete subgroup $\Gamma$ of $G$
 that acts properly discontinuously and freely on $G/H$,  
 such that the fundamental group $\pi_1(\Gamma \backslash G/H)$
 is infinite.  
\end{theorem}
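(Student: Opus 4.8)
The plan is to manufacture a closed one-parameter subgroup $L\cong\mathbb{R}$ of $G$ that acts properly and freely on $G/H$, and then to take for $\Gamma$ the infinite cyclic group $\exp(\mathbb{Z}X)\subset L$; the infinitude of $\pi_1(\Gamma\backslash G/H)$ will then follow almost formally. I will treat as the model case the situation in which $G$ is simply connected and $H$ is connected, since by Chevalley's structure theory \cite{Ch41} this is where the essential difficulty lies: there $G/H$ is diffeomorphic to a Euclidean space (so that $\pi_1$ is a priori trivial and a nontrivial $\Gamma$ is genuinely needed), while the commutator subgroup $N:=[G,G]$ is closed, connected, simply connected, and nilpotent.

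The construction of $L$ I would carry out by induction on $\dim G$ through the following dichotomy. If some nonzero continuous homomorphism $\chi\colon G\to\mathbb{R}$ vanishes on $H$---equivalently $\mathfrak{h}+[\mathfrak{g},\mathfrak{g}]\ne\mathfrak{g}$---then, choosing $X$ with $\chi(X)=1$ and putting $L:=\exp(\mathbb{R}X)$, the descended map $\bar\chi\colon G/H\to\mathbb{R}$ is $L$-equivariant for the translation action. Since translations act properly on $\mathbb{R}$ and properness pulls back along any equivariant continuous map, the $L$-action on $G/H$ is proper; it is free because $\chi(\exp tX)=t$ cannot vanish on $H$ for $t\ne0$. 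In the opposite case $\mathfrak{h}+[\mathfrak{g},\mathfrak{g}]=\mathfrak{g}$, that is $G=NH$, I would descend to the commutator subgroup: normality of $N$ together with $G=NH$ makes the inclusion $N\hookrightarrow G$ induce an $N$-equivariant diffeomorphism $N/(H\cap N)\xrightarrow{\ \sim\ }G/H$. As $H\cap N$ is a proper closed subgroup of the lower-dimensional nilpotent group $N$, the inductive hypothesis furnishes an $L\subset N$ acting properly and freely on $N/(H\cap N)$, and the $N$-equivariant identification transports this to a proper free action on $G/H$ with $L\cap H=L\cap(H\cap N)=\{e\}$. Each use of the second alternative strictly decreases the dimension, while the abelian case always falls under the first, so the induction terminates. (That $H\cap N$ is again connected, as the induction requires, follows from the simple connectivity of connected closed subgroups and of quotients in the simply connected solvable setting.)

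Once $L\cong\mathbb{R}$ is secured, the topological conclusion is immediate. A proper free $\mathbb{R}$-action presents $G/H$ as a principal $\mathbb{R}$-bundle over the Hausdorff, paracompact quotient $B:=L\backslash(G/H)$, and a contractible structure group forces this bundle to be trivial: $G/H\cong\mathbb{R}\times B$, with $L$ translating the first factor. Restricting to $\Gamma:=\exp(\mathbb{Z}X)\cong\mathbb{Z}$ gives $\Gamma\backslash(G/H)\cong S^1\times B$, so that $\pi_1(\Gamma\backslash G/H)$ contains $\pi_1(S^1)=\mathbb{Z}$ and is infinite; in the model case, where $G/H$ is contractible, it equals $\mathbb{Z}$.

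The step I expect to be the main obstacle is the second alternative of the induction, together with its reliance on Chevalley's theorem: establishing that $N=[G,G]$ is closed and simply connected is exactly what legitimizes the $N$-equivariant diffeomorphism $G/H\cong N/(H\cap N)$ and permits the descent into the nilpotent range, and this is precisely the property that fails outside the solvable setting. The remaining reductions are more routine but should be recorded. A general solvable $G$ is handled by passing to its universal cover $\widetilde G$ and replacing $H$ by $p^{-1}(H)$, under which $G/H$ is unchanged. If $H$ thereby becomes disconnected one can no longer expect a proper free $\mathbb{R}$-subgroup to exist---already for $G=\mathbb{R}^2$, $H=\mathbb{Z}^2$ the quotient is a torus on which every line is periodic or dense---but in that situation $G/H$ carries nontrivial topology of its own and supplies an infinite fundamental group directly, so the substantive content remains the connected case treated above.
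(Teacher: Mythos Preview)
The paper does not provide its own proof of this theorem; it cites \cite{Ko93} and notes only that the argument rests on Chevalley's structural results \cite{Ch41} for solvable Lie groups. Your proposal is correct and is precisely in that spirit: the inductive dichotomy---either some character of $G$ annihilates $H$, yielding an equivariant map to $\mathbb{R}$ that forces properness, or $G=NH$ and one descends to the commutator subgroup $N$---is sound, and the Chevalley-type inputs you invoke (closedness and simple connectivity of $N=[G,G]$, contractibility of $G/H_0$, connectedness of $H\cap N$ via the fibration $H\cap N\to H\to H/(H\cap N)\hookrightarrow G/N\cong\mathbb{R}^k$) are exactly what the simply connected solvable hypothesis provides.

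Two points deserve an extra line. First, when pushing $\Gamma\subset\widetilde{G}$ down to the original $G$, note that $\ker p\subset p^{-1}(H)=\widetilde{H}$ while $\Gamma\cap\widetilde{H}=\{e\}$ by freeness, so $p|_\Gamma$ is injective; discreteness of $p(\Gamma)$ in $G$ then follows from the proper discontinuity of the action on $G/H$ (a non-discrete image would accumulate at $e$ and violate the existence of $\Gamma$-slices). Second, in the disconnected-$H$ case your claim that $\pi_1(G/H)$ is already infinite is correct but should be justified: since $G/H_0\cong\mathbb{R}^n$ is contractible and $H/H_0$ acts freely on it, $H/H_0$ has finite cohomological dimension and is therefore torsion-free, hence infinite when nontrivial, and $\pi_1(G/H)\cong H/H_0$.
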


\section{Cocompact Discontinuous Groups}
\label{sec:cocompact}

One of the central and challenging problems
 concerning discontinuous groups
 acting on non-Riemannian homogeneous spaces $G/H$ is the following:

\begin{problem}
[{\cite[Problem B]{K01}}]
\label{prob:G1}
Determine all pairs $(G,H)$
for which $G/H$ admits {\emph{cocompact}} discontinuous groups.  
\end{problem}

Problem~\ref{prob:G1} is a long-standing open problem, 
 and it remains unsolved even
 when $G/H$ is a symmetric space
 of rank one, 
 as exemplified by the space form conjecture
 (Conjecture \ref{conj:G4}).

{}From now on, 
 we focus on the case
 where $G$ is a real reductive linear Lie group
 and $H$ is a reductive subgroup.  
We recall, 
 as seen in Proposition~\ref{prop:redGH}, 
 that the homogeneous space $G/H$ admits 
 a pseudo-Riemannian structure
 with respect to which $G$ acts as a group of isometries.

In the classical case where $H$ is compact, 
 a theorem of Borel \cite{Bo62} affirms Problem~\ref{prob:G1}
 by establishing the existence
 of cocompact arithmetic discrete subgroups in $G$.

When $G$ is noncompact, 
 cocompact discontinuous groups for $G/H$
 are much smaller
 than cocompact lattices in $G$.  
For example, 
 their cohomological dimensions
 are strictly smaller \cite[Cor.\ 5.5]{K89}.  
A simple approach to Problem~\ref{prob:G1} is
 to consider a \lq\lq{continuous analog}\rq\rq\
 of discontinuous groups $\Gamma$, 
 thereby leading to the notion 
 of the {\emph{standard quotient}}, 
 as described below.

\subsection{Standard Quotient $\Gamma \backslash G/H$}
~~~\par
We continue to work under the standing assumption
 that $G$ is a real reductive linear Lie group 
 and that $H \subset G$ is a reductive subgroup.

\begin{definition}
[Standard Quotient {\cite[Def.\ 1.4]{KasK16}}]
\label{def:standard}
Suppose $L$ is a reductive subgroup of $G$
 such that the action of $L$ on $G/H$ is proper.  
Then any torsion-free discrete subgroup $\Gamma$
 of $L$ is a discontinuous group for $G/H$;
 that is, 
 the $\Gamma$-action on $G/H$ is properly discontinuous and free.  
The quotient space $\Gamma \backslash G/H$
 is referred to as a {\emph{standard quotient}} of $G/H$.  
\end{definition}

The properness criterion stated in Theorem~\ref{thm:proper89} 
 provides a convenient method 
 for checking whether a given reductive subgroup $L \subset G$
 satisfies the condition required in Definition~\ref{def:standard}.

\subsection{Finding Cocompact Discontinuous Groups}
~~~\par
If a subgroup $L$ as in Definition~\ref{def:standard} acts cocompactly on $G/H$, 
 then $G/H$ admits a cocompact discontinuous group $\Gamma$, 
 obtained by taking $\Gamma$ to be a torsion-free cocompact discrete subgroup of $L$, 
 where existence is guaranteed by Borel's theorem.  
A necessary and sufficient condition for such a subgroup $L$, 
 which acts properly on $G/H$, 
 to act cocompactly is 
\begin{equation}
\label{eqn:K89_Thm47}
   d(L) + d (H) = d(G), 
\end{equation}
as established in \cite[Thm.\ 4.7]{K89}, 
where $d(G):= \dim {\mathfrak{p}}= \dim G/K$.

A list of reductive homogeneous spaces $G/H$
 that admit proper and cocompact actions
 of reductive subgroups may be found in \cite{KnK25}, 
 summarizing earlier lists 
 including \cite{K89, K97}.  
A particularly important subclass
 consists of irreducible symmetric spaces, 
 which are the main focus of \cite{KY05}.  
These works, 
 in particular, 
 provide examples of compact pseudo-Riemannian locally homogeneous spaces
 $\Gamma \backslash G/H$
 realized as standard quotients of $G/H$.

The following conjecture was proposed 
by the author in \cite{K01}.  

\begin{conjecture}
[{\cite[Conj.\ 4.3]{K01}}]
\label{conj:G1}
The homogeneous space $G/H$ of reductive type admits
 a cocompact properly discontinuous group 
 if and only if $G/H$ admits a compact standard quotient.  
\end{conjecture}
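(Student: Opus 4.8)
The forward implication is immediate: if $G/H$ admits a compact standard quotient, then by Definition~\ref{def:standard} there is a reductive subgroup $L\subset G$ acting properly on $G/H$ and a torsion-free discrete subgroup $\Gamma\subset L$ with $\Gamma\backslash G/H$ compact, and this $\Gamma$ is by construction a cocompact discontinuous group for $G/H$. The whole content therefore lies in the converse, which I would attack by first converting it into a statement about the invariant $d(\,\cdot\,)$ and then trying to realize that invariant by a \emph{reductive} subgroup.

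The first step is to reformulate the target. Combining the cocompactness criterion \eqref{eqn:K89_Thm47} with Definition~\ref{def:standard}, the existence of a \emph{compact} standard quotient is equivalent to the existence of a reductive subgroup $L$ with $L\pitchfork H$ acting cocompactly on $G/H$, i.e.\ with $d(L)=d(G)-d(H)$; for then Borel's theorem \cite{Bo62} furnishes a torsion-free cocompact lattice $\Gamma'\subset L$, and $\Gamma'\backslash G/H$ is the desired compact standard quotient. On the other side I would record the cohomological obstruction carried by an \emph{arbitrary} cocompact discontinuous group $\Gamma$: since $\Gamma$ is torsion-free and $G/K$ is contractible, $\Gamma\backslash G/K$ is a $K(\Gamma,1)$, and the compact Clifford--Klein form $\Gamma\backslash G/H$ is, up to homotopy, the total space of a bundle over it with compact fibre $K/(H\cap K)$ of dimension $e(G/H)$; feeding this into the Leray--Serre spectral sequence together with Poincar\'e duality on the compact $\dim(G/H)$-manifold $\Gamma\backslash G/H$ forces $\operatorname{cd}(\Gamma)=d(G)-d(H)$, the upper bound being \cite[Cor.\ 5.5]{K89}. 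Thus the converse reduces to a purely group-theoretic problem: \emph{given a torsion-free discrete $\Gamma\subset G$ with $\Gamma\pitchfork H$ and $\operatorname{cd}(\Gamma)=d(G)-d(H)$, produce a reductive $L\subset G$ with $L\pitchfork H$ and $d(L)=d(G)-d(H)$.}

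The plan for producing $L$ is to route everything through the properness criterion (Theorem~\ref{thm:proper89}, Theorem~\ref{thm:proper96}), which reduces $\pitchfork$ to the elementary relation $\mu(L)\pitchfork\mu(H)$ in the abelian space $\mathfrak a$. I would analyze the asymptotic image $\mu(\Gamma)\subset\overline{\mathfrak a_+}$ and its limit cone \cite{B96}: properness of $\Gamma$ on $G/H$ forces this cone to meet $W\cdot\mu(H)$ only at the origin, which is the $\mathfrak a$-side shadow of the condition $\mathfrak a_H\cap W\mathfrak a_L=\{0\}$ in Theorem~\ref{thm:proper89}. The goal is then to exhibit a $\theta$-stable reductive subgroup $L$ whose flat part $\mathfrak a_L$ lies in the interior of this cone, hence satisfies $\mathfrak a_H\cap W\mathfrak a_L=\{0\}$ and acts properly, and whose full invariant $d(L)=\dim(\mathfrak p\cap\mathfrak l)$ attains the maximal value $d(G)-d(H)$ dictated by $\operatorname{cd}(\Gamma)$. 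When such an $L$ exists it is typically located through the Dynkin--Kostant classification of homomorphisms $SL(2,\mathbb R)\to G$ and their iterated centralizers, exactly as in Okuda's analysis \cite{O13} of the symmetric case (Theorem~\ref{thm:okuda}).

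The hard part will be precisely this last matching. Properness is an \emph{$\mathfrak a$-condition}: it sees only the flat $\mathfrak a_L=\mathfrak a\cap\mathfrak l$ and is completely controlled by Cartan projections. Cocompactness, by contrast, is a \emph{dimension condition}: it sees all of $\mathfrak p\cap\mathfrak l$ through $d(L)$. An arbitrary cocompact discontinuous group hands us the correct cohomological dimension $d(G)-d(H)$, but there is no known rigidity or deformation principle forcing that abstract group to be \lq\lq{upgradable}\rq\rq\ to a reductive subgroup of full $d$-invariant acting properly; the limit-cone data that governs $\pitchfork$ simply does not determine the $d$-invariant that governs cocompactness. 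This gap is exactly why the conjecture is open in general and is presently known only where the classification makes the search for $L$ finite and explicit. A realistic programme would therefore be to establish the required bridge under an extra hypothesis---for instance Zariski density of $\Gamma$, where one may hope to use superrigidity to constrain $\mu(\Gamma)$ so tightly that its limit cone forces a reductive $L$ of maximal $d$-invariant---and then to remove the hypothesis.
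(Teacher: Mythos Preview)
The statement is presented in the paper as an \emph{open conjecture}, not a theorem; the paper offers no proof. Immediately after stating it the author writes ``If Conjecture~\ref{conj:G1} were proved to be true, then Problem~\ref{prob:G1} would reduce to\ldots'', and the surrounding discussion (Remark after Problem~\ref{prob:G3}, the space form conjecture~\ref{conj:G4}, Conjecture~\ref{conj:SLSL}) makes clear that the converse direction is genuinely unresolved. There is therefore nothing in the paper to compare your argument against.

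Your treatment of the easy direction (compact standard quotient $\Rightarrow$ cocompact discontinuous group) is correct and is exactly what the paper sketches in the paragraph preceding the conjecture, via Borel's theorem and the cocompactness criterion \eqref{eqn:K89_Thm47}. Your reformulation of the hard direction---reduce to finding a reductive $L$ with $L\pitchfork H$ and $d(L)=d(G)-d(H)$, then attack properness through the Cartan projection and Theorem~\ref{thm:proper89}---is the natural programme and is in the spirit of the paper.

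But as you yourself say in your final paragraph, the programme has a genuine gap that you do not close: the limit-cone data extracted from $\mu(\Gamma)$ controls only the $\mathfrak a$-side condition $\mathfrak a_H\cap W\mathfrak a_L=\{0\}$, while cocompactness requires matching the full invariant $d(L)=\dim(\mathfrak p\cap\mathfrak l)$, and no known mechanism promotes the former to the latter. Your proposal is thus an outline of an approach to an open problem, honestly labelled as such, rather than a proof. (A minor technical point: your bundle description of $\Gamma\backslash G/H$ over $\Gamma\backslash G/K$ is not literally correct, since $H\not\subset K$ in general; the equality $\operatorname{cd}(\Gamma)=d(G)-d(H)$ for cocompact $\Gamma$ is true but requires the $K$-equivariant fibration $G/H\to K/(H\cap K)$ with contractible fibre rather than the picture you sketch.)
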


If Conjecture~\ref{conj:G1} were proved to be true, 
 then Problem~\ref{prob:G1} would reduce
 to the following one:

\begin{problem}
\label{prob:G3}
Classify all pairs $(G,H)$
 such that $G/H$ admits a compact standard quotient.  
\end{problem}

This problem is expected to be tractable, 
 as it reduces to checking
 a finite number of representation-theoretic conditions
 for each $G/H$
 in order to verify the properness criterion and the cocompactness criterion in \cite[Thms 4.1 and 4.7]{K89}.

Tojo \cite{Tj} showed that the list of irreducible symmetric spaces $G/H$
in \cite{KY05} admitting proper and cocompact actions
 of reductive subgroups $L$
 is, 
in fact, 
 complete  up to compact factors in the case
 where $G$ is a simple Lie group.  
This result provides a solution to Problem~\ref{prob:G3}
 in the case
 where $G$ is a symmetric space
 with $G$ simple.

Furthermore, 
 Boche{\'n}ski \cite{Bo22} studied the case
 where $G$ is the direct product of two absolutely simple groups.  
A more recent preprint of Boche{\'n}ski--Tralle \cite{BoT24} shows
 that, under the assumption
 that $G$ is absolutely simple, 
 the list in \cite{KY05} contain
 all the homogeneous spaces $G/H$
 that admit proper and cocompact actions
 of reductive subgroups $L$, 
 up to compact factors and switching $L$ and $H$, 
 thereby yielding further progress on Problem~\ref{prob:G3}.  

\begin{remark}
(1)\enspace
Conjecture~\ref{conj:G1} does not assert
 that all cocompact discontinuous groups are standard.  
Indeed, 
 there exist reductive homogeneous spaces $G/H$
 that admit {\emph{non-standard}} compact quotients;
 that is, 
 there exist triples $(G, H, \Gamma)$
 such that $\Gamma$ is a cocompact discontinuous group for $G/H$, 
 while the Zariski closure of $\Gamma$
 fails to act properly on $G/H$;
 see \cite{K98, Kas12, KnK25}.  
\par\noindent
(2)\enspace
An analogue of Conjecture~\ref{conj:G1} was established
 by Okuda \cite{O13}
 for semisimple symmetric spaces $G/H$.  
It is worth noting that this result replaces the key assumption 
 of cocompactness 
 in the original conjecture
 with the requirement 
 that $\Gamma$ is a surface group $\pi_1(\Sigma_g)$,
 as stated in Theorem \ref{thm:okuda}.  
\end{remark}

Special cases of Conjecture~\ref{conj:G1} include the following:

\begin{conjecture}
[Space Form Conjecture {\cite[Conj.~2.6]{K01}}]
\label{conj:G4}
There exists a compact, complete, 
 pseudo-Riemannian manifold
 of signature $(p,q)$
 with constant sectional curvature $1$
 if and only if $(p,q)$ lies
 in the following list:

\begin{figure}[H]
\begin{center}
\begin{tabular}{cccccc}
\hline
$p$
& ${\mathbb{N}}$
& 0
& 1
& 3
& 7
\\
\hline
$q$
& 0
& ${\mathbb{N}}$
& $2{\mathbb{N}}$
& $4{\mathbb{N}}$
& 8
\\
\hline
\end{tabular}
\end{center}
\end{figure}
See also Example~\ref{ex:GammaXpq} (6) below
 for the {\emph{tangential analogue}} 
 in the context of Cartan motion groups.  
\end{conjecture}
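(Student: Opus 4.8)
The plan is to recast the problem entirely in the group-theoretic language of the preceding sections. A complete pseudo-Riemannian manifold of signature $(p,q)$ and constant sectional curvature $+1$ is, by the classification of complete constant-curvature pseudo-Riemannian manifolds (Killing--Hopf), a quotient $\Gamma \backslash X$ of the model pseudo-sphere $X = \{v \in {\mathbb{R}}^{p+1,q} : \langle v, v\rangle = 1\}$ by a discontinuous group $\Gamma$, and compactness of $\Gamma \backslash X$ is exactly cocompactness of $\Gamma$. Writing $X \simeq G/H$ with $(G,H) = (O(p+1,q), O(p,q))$, a space form of reductive type, the existence of a compact space form of signature $(p,q)$ becomes the existence of a cocompact discontinuous group for $G/H$---an instance of Problem~\ref{prob:G1}.

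For the \emph{if} direction I would produce, for each listed pair, an explicit reductive subgroup $L \subset G$ acting properly and cocompactly, and then take $\Gamma$ to be a torsion-free cocompact lattice in $L$ (which exists by Borel's theorem), yielding a compact \emph{standard} quotient. The candidates come from the three Hopf fibrations: $L = U(1,n)$ for signature $(1,2n)$, $L = Sp(1,n)$ for $(3,4n)$, and $L = Spin(8,1)$ for $(7,8)$, with the two Riemannian families $(n,0)$ and $(0,n)$ being classical. In each case properness is checked by the criterion of Theorem~\ref{thm:proper89}, computing the Cartan projections and verifying $\mathfrak{a}_H \cap W \cdot \mathfrak{a}_L = \{0\}$, while cocompactness follows from the dimension identity \eqref{eqn:K89_Thm47}, namely $d(L) + d(H) = d(G)$.

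For the \emph{only if} direction I would proceed in two stages. First, many unlisted signatures are excluded immediately by the Calabi--Markus phenomenon (Corollary~\ref{cor:CMcri}): whenever $\operatorname{rank}_{\mathbb{R}} G = \operatorname{rank}_{\mathbb{R}} H$ the space $G/H$ admits no infinite discontinuous group, so certainly no compact quotient; this disposes of the Lorentzian de Sitter spaces and their relatives. Second, because $G/H = O(p+1,q)/O(p,q)$ is a \emph{symmetric} space, the classification of those $G/H$ carrying a reductive $L$ acting properly and cocompactly is a finite representation-theoretic computation via the properness and cocompactness criteria of \cite{K89}; this has been carried out (\cite{KY05} and subsequent work) and singles out precisely the pairs in the table. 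Together these show that the tabulated signatures are exactly those admitting a compact \emph{standard} quotient.

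The hard part---and the reason the statement is phrased as a conjecture---is closing the gap between \emph{standard} and \emph{arbitrary} cocompact discontinuous groups. The two stages above settle the existence and non-existence of compact standard quotients, but a priori there could be exotic, non-standard cocompact discontinuous groups for an unlisted $G/H$, whose Zariski closure fails to act properly. Ruling these out is exactly the content of Conjecture~\ref{conj:G1} for these symmetric spaces, and I expect it to be the principal obstacle: it appears to require new rigidity or cohomological-dimension input beyond the properness and cocompactness criteria, since the relation $\pitchfork$ alone does not detect the finer deformations of a discontinuous group inside $G$.
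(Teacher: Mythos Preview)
Your strategy is sound and matches the paper's own framing precisely. Note that the statement is a \emph{conjecture}: the paper does not offer a proof, and your honest labeling as ``Strategy toward a proof'' is appropriate. The paper's surrounding discussion (Example~\ref{ex:GammaXpq}) records exactly the pieces you identify: the ``if'' direction is established via the standard quotients you list (Hopf-type subgroups $U(1,n)$, $Sp(1,n)$, $Spin(1,8)$, checked by Theorem~\ref{thm:proper89} and \eqref{eqn:K89_Thm47}); partial necessary conditions come from Calabi--Markus and further obstructions; and the remaining gap is precisely the reduction to Conjecture~\ref{conj:G1}, which you correctly isolate as the principal obstacle.

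Two small refinements. First, your invocation of Killing--Hopf requires care in the pseudo-Riemannian setting: geodesic completeness does not automatically guarantee that the developing map to the model space is a covering, so the passage from ``compact, complete, constant curvature $+1$'' to ``quotient of $O(p+1,q)/O(p,q)$ by a discontinuous group'' needs an additional argument (or hypothesis) in indefinite signature. Second, the Calabi--Markus step (Example~\ref{ex:GammaXpq}(1)) only eliminates the range $p \ge q \ge 1$; the known further obstructions for $p<q$ (e.g.\ $q$ odd, or the Hirzebruch-proportionality and cohomological arguments in Example~\ref{ex:GammaXpq}(3)) go beyond the rank criterion and should be cited separately rather than folded into the ``standard quotient classification'' step, since they apply to \emph{all} cocompact discontinuous groups, not just standard ones.
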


\begin{conjecture}
\label{conj:SLSL}
For any non-trivial homomorphism $\psi \colon SL(m,{\mathbb{R}}) \to SL(n,{\mathbb{R}})$ with $m<n$, 
 the homogeneous space $SL(n,{\mathbb{R}})/\psi(SL(m,{\mathbb{R}}))$ does not admit 
 a cocompact discontinuous group.  
\end{conjecture}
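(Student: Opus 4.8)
The plan is to attack the conjecture through the two invariants that govern compact Clifford--Klein forms: the properness criterion of Theorem~\ref{thm:proper89} and the cocompactness criterion \eqref{eqn:K89_Thm47}. Write $G=SL(n,{\mathbb{R}})$ and $H=\psi(SL(m,{\mathbb{R}}))$; the case $m=1$ is vacuous, so assume $m\ge 2$, whence $SL(m,{\mathbb{R}})$ is simple and a non-trivial $\psi$ has at most central kernel. Thus $H$ is connected semisimple with $\operatorname{rank}_{\mathbb{R}}H=m-1$ and, intrinsically, $d(H)=\tfrac{(m-1)(m+2)}{2}$, independently of $\psi$. With $d(G)=\tfrac{(n-1)(n+2)}{2}$ one computes the cocompact target dimension
\[
  D:=d(G)-d(H)=\tfrac{(n-m)(n+m+1)}{2}.
\]
Since $\operatorname{rank}_{\mathbb{R}}H=m-1<n-1=\operatorname{rank}_{\mathbb{R}}G$, Corollary~\ref{cor:CMcri} shows that $G/H$ \emph{does} admit infinite (even ${\mathbb{Z}}$) discontinuous groups, so the content is purely about \emph{cocompactness}. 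First I would reduce, via Conjecture~\ref{conj:G1}, to showing that $G/H$ has no compact \emph{standard} quotient, equivalently that no reductive $L\subset G$ acts properly on $G/H$ with $d(L)=D$; because Conjecture~\ref{conj:G1} is itself open, this statement should also be proved directly for standard quotients, which is unconditional.

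Next I would translate properness into a concrete condition on the Cartan projection. By Theorem~\ref{thm:proper89}, a reductive $L$ acts properly on $G/H$ iff ${\mathfrak{a}}_L\cap W\cdot{\mathfrak{a}}_H=\{0\}$ inside ${\mathfrak{a}}=\{x\in{\mathbb{R}}^n:\sum x_i=0\}$ with $W={\mathfrak{S}}_n$. Taking the identity of $W$ already forces ${\mathfrak{a}}_L\cap{\mathfrak{a}}_H=\{0\}$, hence $\operatorname{rank}_{\mathbb{R}}L=\dim{\mathfrak{a}}_L\le(n-1)-(m-1)=n-m$. For the \emph{standard} embedding $W\cdot{\mathfrak{a}}_H=\{v\in{\mathfrak{a}}:\#\{i:v_i\ne0\}\le m\}$, so properness becomes the coding-theoretic requirement that every nonzero $v\in{\mathfrak{a}}_L$ have Hamming weight $\ge m+1$; the Singleton bound recovers $\dim{\mathfrak{a}}_L\le n-m$, and this is the model one must control for general $\psi$, with $W\cdot{\mathfrak{a}}_H$ playing the role of a forbidden cone. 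The conjecture (in the standard case) then reduces to the single dimension inequality $d(L)<D$ for every reductive $L\subset G$ acting properly on $G/H$.

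The hard part is precisely this inequality, and it cannot be purely numerical. The rank bound $\operatorname{rank}_{\mathbb{R}}L\le n-m$ alone is insufficient, since real rank does not bound $d(L)$: groups such as $SO(N,1)$ or $SU(N,1)$ have rank one yet arbitrarily large $d$. The point is that the general-position (minimum-weight) constraint on ${\mathfrak{a}}_L$ interacts with the restricted-root structure of ${\mathfrak{sl}}(n,{\mathbb{R}})$, whose roots are $e_i-e_j$: once ${\mathfrak{a}}_L$ is forced to high minimum weight, the values $\langle e_i-e_j,\,\cdot\,\rangle$ spread out on ${\mathfrak{a}}_L$, which caps the multiplicities of the $L$-root spaces inside ${\mathfrak{p}}$ and hence bounds $d(L)=\dim({\mathfrak{l}}\cap{\mathfrak{p}})$. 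Indeed one already sees this mechanism bite: the block $SL(n-m+1,{\mathbb{R}})$ and $Sp$-type subgroups fail properness because their split tori carry low-weight vectors. Making the bound sharp---showing $d(L)$ stays strictly below $D$ for \emph{every} admissible embedding---amounts to analyzing which semisimple ${\mathfrak{l}}\hookrightarrow{\mathfrak{sl}}(n,{\mathbb{R}})$ are compatible with a high-minimum-weight split torus, essentially a finite check against the Dynkin--Kostant/representation-theoretic classification \eqref{eqn:DK}, but now run over all of $\operatorname{Hom}(L,G)$ and twisted by the $\psi$-dependence of $W\cdot{\mathfrak{a}}_H$. I expect this to be the principal obstacle.

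Finally, to remove reliance on Conjecture~\ref{conj:G1} and rule out \emph{non-standard} cocompact groups, I would pass to the cohomological-dimension obstruction of \cite[Cor.\ 5.5]{K89}: a cocompact discontinuous group $\Gamma$ for $G/H$ is forced to have (virtual) cohomological dimension $d(G)-d(H)=D$, whereas a properly discontinuous $\Gamma$ has its Cartan projection $\mu(\Gamma)$ confined away from $\mu(H)$. The goal is to convert this confinement into an upper bound on $\operatorname{cd}(\Gamma)$ lying strictly below $D$, in the spirit of the sharpness estimates of \cite{KasK16} and the dynamical volume/temperedness methods of \cite{BeKoI,BeKoII,BeKoIII,BeKoIV}. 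Combining such a quantitative discrete bound with the reductive case above would close the argument; establishing it for arbitrary non-standard $\Gamma$ is the deepest and most open ingredient.
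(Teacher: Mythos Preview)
The statement you are addressing is a \emph{conjecture}; the paper does not prove it and presents it explicitly as open. Only special cases are discussed (Example~\ref{ex:4.7}): the standard embedding, whose full solution was only recently announced by Kassel--Morita--Tholozan after thirty-five years of partial results, and Margulis's theorem for irreducible $\psi$ with $n\ge 5$ via tempered-subgroup methods. There is thus no proof in the paper against which to compare your proposal; it must be assessed as an attack on an open problem.

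As such, your outline correctly identifies the architecture of the problem, but it is a research plan rather than a proof, and you are candid about this. Three of its steps are themselves open. First, the reduction to standard quotients invokes Conjecture~\ref{conj:G1}, which is unproved; your proposed workaround at the end does not avoid it. Second, even the standard-quotient step---showing $d(L)<D$ for every reductive $L\subset G$ acting properly on $G/H$---is left incomplete: you correctly note that the rank bound $\operatorname{rank}_{\mathbb{R}}L\le n-m$ is insufficient and sketch a plausible mechanism, but the exhaustive analysis over all embeddings $L\hookrightarrow G$ and all $\psi$ has not been carried out, and for general $\psi$ the set $W\cdot{\mathfrak{a}}_H$ is not the simple Hamming-weight locus of the standard case. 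Third, and most seriously, the proposed bound $\operatorname{cd}(\Gamma)<D$ for non-standard $\Gamma$ via ``confinement of $\mu(\Gamma)$ away from $\mu(H)$'' is not an established technique: there is at present no general method to convert the properness condition $\mu(\Gamma)\pitchfork\mu(H)$ into a cohomological-dimension bound strictly below the cocompact value. If such a method existed, Conjecture~\ref{conj:G1} itself would largely follow. The sharpness and temperedness tools you cite are relevant but do not currently deliver this step.
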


The following are notable special cases
 of Conjecture~\ref{conj:SLSL}, 
 corresponding to specific choices of $\psi$. 
 For a concise overview of these methods, including a discussion of their limitations and applications, see \cite{K01, KT-pre}.

\begin{example}
\label{ex:4.7}
{\rm{(1)}}\enspace
($\psi$ is the identity map.)
For the standard representation $\psi$, 
 the homogeneous space 
 $SL(n,{\mathbb{R}})/\psi(SL(m,{\mathbb{R}}))$
 does not admit a cocompact discontinuous group.  
\newline
{\rm{(2)}}\enspace
($\psi$ is an irreducible representation.)
For any {\emph{irreducible}} representation
 $\psi \colon SL(2,{\mathbb{R}}) \to SL(n,{\mathbb{R}})$ with $n \ge 5$, 
 the homogeneous space $SL(n,{\mathbb{R}})/\psi(SL(2,{\mathbb{R}}))$
 does not admit a cocompact discontinuous group.  
\end{example}

The first statement in Example~\ref{ex:4.7} has been studied
 over 35 years 
 with affirmative results
 obtained for \lq\lq{generic parameters}\rq\rq.  
A complete solution 
 was recently announced
 by Kassel, Morita, 
 and Tholozan \cite{KT-pre, KMT-pre}.  
Earlier contributions include
 \cite{KICM90sate, KDuke92, Z94, LZ95, LMZ95, B96, S00, Th, M17}, 
 which employed a variety of approaches from different areas.

The second statement in Example~\ref{ex:4.7} was proved
 by Margulis (\cite{Ma97})
 based on the notion of {\emph{tempered subgroups}}, 
 defined by the asymptotic behaviour
 of matrix coefficients
 of unitary representations
 under the restriction from $G$ to its subgroup $H$, 
 symbolically written as $G \downarrow H$.  
In contrast to this notion, 
 we will explore the notion 
of \emph{tempered homogeneous spaces}
 $G/H$
 in Section \ref{sec:quantify}
 on the regular unitary representation on $L^2(G/H)$, 
 symbolically written as $H \uparrow G$
 (see Definition \ref{def:temp_space}).

Whereas the idea of standard quotients $\Gamma \backslash G/H$
 is to replace a discrete subgroup $\Gamma$ 
with a connected subgroup $L$
 (Definition~\ref{def:standard}), 
 one may instead consider an \lq\lq{approximation}\rq\rq\
 of Problem~\ref{prob:G1}, 
 by replacing the homogeneous space
 $X=G/H$ with the {\emph{tangential homogeneous space}}
\[
  X_{\theta}:=G_{\theta}/H_{\theta}, 
\]
 where $G_{\theta} := K \ltimes {\mathfrak{p}}$
 is the Cartan motion group 
 associated with the real reductive group $G=K \exp {\mathfrak{p}}$
 and similarly for $H_{\theta}$.  
If $G/H$ admits a compact standard quotient, 
 then the tangential homogeneous space
 $G_{\theta}/H_{\theta}$
 admits a cocompact discontinuous group.  
The group $G_{\theta}$ is a compact extension
 of the abelian group ${\mathfrak{p}}$, 
 and has a much simpler structure.

\vskip 1pc
We consider the following {\emph{tangential}} question
 related to Problem~\ref{prob:G1}:

\begin{problem}
[{\cite{KY05}}]
\label{prob:G5}
For which pairs $(G,H)$ of real reductive Lie groups, 
 does the tangential homogeneous space $G_{\theta}/H_{\theta}$
 admit a cocompact
 discontinuous group?
\end{problem}

This problem is expected to be significantly simpler
 than the original one,
 yet it remains unsolved
 even in the case of symmetric spaces.  
Nevertheless, 
 a complete answer is available for {\emph{tangential}} pseudo-Riemannian space forms, 
 using a theorem of Adams \cite{A62}
 on the maximal number
 of pointwise linearly independent continuous vector fields
 on spheres;
 see Example~\ref{ex:GammaXpq} (6) below.

At the end of this section, 
 we briefly review these problems
 and conjectures, 
 taking the pseudo-Riemannian space form $X(p,q)_+$
 as a representative example.  
We also highlight recent developments in the field
 (see, {\it{e.g.}}, \cite{KConj23, KT-pre, KnK25} and references therein).

\begin{example}
\label{ex:GammaXpq}
Let $(G,H)=(O(p+1,q), O(p,q))$,  
 and let $X=X(p,q)_+=G/H$ denote the pseudo-Riemannian space form
 of signature $(p,q)$ 
 as in Example~\ref{ex:Rpq_surface}.  
\par\noindent
(1) ({\cite{CM, Ku, K89}})\enspace
$X(p,q)$ admits a discontinuous group
 of infinite order if and only if $p<q$.  
\par\noindent
(2) ({\cite{Ku, O13}})\enspace
$X(p,q)$ admits a discontinuous group 
 isomorphic to a surface group
 if and only if $p+1 <q$
 or $p+1=q \in 2 {\mathbb{N}}$.  
\par\noindent
(3) ({\cite{Ku, KO90, Th, mor19}})\enspace
If $X(p,q)$ admits a cocompact discontinuous group, 
 then $p q=0$ or
 $p<q$ with $q \in 2{\mathbb{N}}$.  
\par\noindent
(4) ({\cite{Ku, K01}})\enspace
$X(p,q)$ admits a cocompact discontinuous group
 if $(p,q)$ is in the list, 
 as stated in Conjecture~\ref{conj:G4}.  

\par\noindent
(5) ({\cite{Kas12, KnK25}})\enspace
If $(p,q)=(0,2)$, $(1,2)$, or $(3,4)$, 
 then $X(p,q)$ admits a cocompact discontinuous group
 that can be continuously deformed
 into a Zariski dense subgroup of $G$, 
 while preserving proper discontinuity of the action.  
Moreover, 
 for $(p,q)=(1,2n)$ $(n \ge 2)$, 
 the anti-de Sitter space $X(1,2n)$ admits a compact quotient
 which has a non-trivial continuous deformation
 within the class of standard quotients.  
\par\noindent
(6) ({\cite{KY05}})\enspace
The tangential homogeneous space
 $G_{\theta}/H_{\theta}$ admits a cocompact discontinuous group
 if and only if 
 $p< \rho(q)$ 
 where $\rho(q)$ is the {\emph{Radon--Hurwitz number}}.  
Equivalently, 
 this condition holds if and only if $(p,q)$ appears in the following list:

\begin{figure}[H]
\begin{center}
\begin{tabular}{ccccccccccccccc}
\hline
$p$
& ${\mathbb{N}}$
& 0
& 1
& 2
& 3
& 4
& 5
& 6
& 7
& 8
& 9 
& 10
& 11
& $\cdots$
\\
\hline
$q$
& 0
& ${\mathbb{N}}$
& $2{\mathbb{N}}$
& $2{\mathbb{N}}$
& $4{\mathbb{N}}$
& $8{\mathbb{N}}$
& $8{\mathbb{N}}$
& $8{\mathbb{N}}$
& $8{\mathbb{N}}$
& $16{\mathbb{N}}$
& $32{\mathbb{N}}$
& $64{\mathbb{N}}$
& $64{\mathbb{N}}$
& $\cdots$
\\
\hline
\end{tabular}
\end{center}
\end{figure}

\end{example}

\section{Proper Maps and Unitary Representation}
\label{sec:proper_unitary}

This section explores the relationship
 between the properness of group actions 
 and representation theory, 
 particularly
 in the context of discretely decomposable unitary representations.

\subsection{Compact-Like Actions and Compact-Like Unitary Representations}
\label{subsec:cpt_like}
~~~\par
Every continuous action of a compact group is proper
 (see Definition-Lemma~\ref{def:properaction}).  
In this sense, 
 a proper action may be viewed
 as a {\emph{compact-like action}}.

Every unitary representation of a compact group
 decomposes discretely into a direct sum
 of irreducible representations.  
Thus, 
 discretely decomposable unitary representations 
 may be viewed as {\emph{compact-like representations}}.

A proposal to connect two seemingly different areas---{\emph{proper actions}}
 in topology and {\emph{discrete decomposability}}
 in representation theory---by observing how non-compact subgroups
 can exhibit compact-like behaviour 
 within infinite-dimensional automorphism groups
 was first articulated in the 2000 paper 
 \cite[Sect.\ 3]{K-Hayashibara97}.

In this section, 
 we review the foundational concepts
 and give an overview of some developments
 in this direction
 since then.

\medskip
\subsection{Discrete Decomposable Unitary Representations}
~~~\par
Let $\widehat G$ denote the {\emph{unitary dual}} 
 of a locally compact group $G$;
 that is, 
 the set of equivalence classes 
 of irreducible unitary representations of $G$, 
 endowed with the Fell topology.

By a theorem of Mautner \cite[Chap.\ VIII, Sect.\ 41]{Mt50}, 
 every unitary representation of the group
 decomposes into a direct integral 
 of irreducible unitary representations.

Let $G'$ be a subgroup of $G$.  
Suppose that $\pi \in \widehat G$.  
Then, 
 the restriction $\pi|_{G'}$, 
 as a unitary representation of the subgroup $G'$, 
 can be decomposed 
 into a direct integral of irreducible unitary representations:
\begin{equation}
\label{eqn:direct_int}
\pi|_{G'} \simeq \int_{\widehat{G'}}^{\oplus} m_{\pi}(\tau)  \tau d \mu(\tau), 
\end{equation}
where $\mu$ is a Borel measure 
 on the unitary dual $\widehat{G'}$, 
 and
\[
   m_{\pi} \colon \widehat{G'} \to {\mathbb{N}} \cup \{\infty\}
\]
is  a measurable function called the {\emph{multiplicity function}}
 for the direct integral \eqref{eqn:direct_int}.  
This irreducible decomposition is known as {\emph{the branching law}}.  
Typically, 
 it involves continuous spectrum
 when $G'$ is non-compact.

The concept of {\emph{$G'$-admissible restrictions}} was
 introduced in \cite[Sect.\ 1]{KInvent94} 
 in a general setting
 that includes the case
 where $G'$ is a {\emph{non-compact}} subgroup.

\begin{definition}
\label{def:deco}
The restriction $\Pi|_{G'}$ is said to be {\emph{$G'$-admissible}}
 if it can be decomposed discretely 
into a direct sum of irreducible unitary representations
 $\pi$ of $G'$:
\[\text
{
$\Pi|_{G'} \simeq {\underset{\pi \in \widehat{G'}}\sum}^{\oplus} m_{\pi}\pi$
\quad({discrete sum})
}
\]
where the multiplicity $m_{\pi}:=[\Pi|_{G'}:\pi]$ is finite 
 for every $\pi \in \widehat{G'}$.  
\end{definition}

We refer to \cite{KInvent94, K98b}
 for the criterion of $G'$-admissibility
 for the restriction of an irreducible unitary representation
 of a reductive Lie group $G$ to its reductive subgroup $G'$.  
See also Kitagawa \cite{Ki25} for some recent developments.

Discretely decomposable restrictions may be regarded
 as {\emph{compact-like representations}}.  
We examine the discrete decomposability
 of representations from two perspectives:
one based on the properness
 of the moment map (Section~\ref{subsec:coadjoint}), 
 and the other based on {\emph{proper actions}} of groups
 (Section~\ref{subsec:K17} and Theorem~\ref{thm:HG_rest}).  


\subsection{Coadjoint Orbits and Proper Maps}
\label{subsec:coadjoint}
~~~\par
Let $G$ be a Lie group, 
 and ${\mathfrak{g}}^{\ast}$ the dual of the Lie algebra ${\mathfrak{g}}$.  
The {\emph{orbit method}} initiated by Kirillov, 
 and developed by Kostant, Duflo, 
 and Vogan among others,
 is a philosophy
 that seeks to understand the unitary dual $\widehat G$
 through the coadjoint representation
 $\operatorname{Ad}^{\ast}
 \colon G \to GL_{\mathbb{R}}({\mathfrak{g}}^{\ast})$.

For $\lambda \in {\mathfrak{g}}^{\ast}$, 
 ${\mathcal{O}}_{\lambda}:=\operatorname{Ad}^{\ast}(G) \lambda$ is called
 a {\emph{coadjoint orbit}}.  
The quotient space ${\mathfrak{g}}^{\ast}/\operatorname{Ad}^{\ast}(G)$
 parametrizes coadjoint orbits.  
Loosely speaking, 
 the orbit method suggests the existence
 of a \lq\lq{natural correspondence}\rq\rq\
 between a subset of the set ${\mathfrak{g}}^{\ast}/\operatorname{Ad}^{\ast}(G)$
 and the unitary dual $\widehat G$.  
Indeed, 
 there exists a natural bijection 
\begin{equation}
\label{eqn:geom_Q}
   Q \colon {\mathfrak{g}}^{\ast}/\operatorname{Ad}^{\ast}(G) \overset \sim \to \widehat G
\end{equation}
when $G$ is a simply connected nilpotent Lie group, 
 as Kirillov established in his 1962
 celebrated paper \cite{Ki62}.  
For reductive Lie groups $G$, 
 there is no such natural bijection as in \eqref{eqn:geom_Q}, 
 however, 
 one still expects that the orbit method provides 
 insight into unitary representations
 of $G$
 via a deep relationship 
between ${\mathfrak{g}}^{\ast}/\operatorname{Ad}^{\ast}(G)$ and $\widehat G$.

For any $\lambda \in {\mathfrak{g}}^{\ast}$, 
 the skew-symmetric bilinear map 
\[
  \lambda \colon {\mathfrak{g}} \times {\mathfrak{g}} \to {\mathbb{R}}, \quad
  [X,Y] \mapsto \lambda([X,Y])
\]
induces a $G$-invariant symplectic form
 on the coadjoint orbit 
\[
   {\mathcal{O}}_{\lambda}:=\operatorname{Ad}^{\ast}(G) \lambda \simeq G/G_{\lambda}, 
\]
 which is known as the {\emph{Kostant--Kirillov--Souriau symplectic form}}.  
The momentum map of the $G$-action on ${\mathcal{O}}_{\lambda}$
 is precisely the canonical injection
 ${\mathcal{O}}_{\lambda} \hookrightarrow {\mathfrak{g}}^{\ast}$, 
 and hence ${\mathcal{O}}_{\lambda}$ is a $G$-Hamiltonian manifold.

{}From this perspective, 
 if one can associate an irreducible unitary representation 
 $\Pi_{\lambda}:=Q({\mathcal{O}}_{\lambda})$
 naturally to a coadjoint orbit ${\mathcal{O}}_{\lambda}$, 
 we may regard $\Pi_{\lambda}$ 
 as a geometric quantization of ${\mathcal{O}}_{\lambda}$.

Let $H$ be a subgroup of $G$.  
By the branching problem we aim to understand
 the restriction $\Pi|_H$
 of a representation $\Pi$ of $G$
 to the subgroup $H$.  
Suppose that $\Pi$ is an irreducible unitary representation
 that corresponds to a coadjoint orbit ${\mathcal{O}}$
 in ${\mathfrak{g}}^{\ast}$.  
We observe
 that the canonical projection
\[
   \operatorname{pr} \colon {\mathfrak{g}}^{\ast} \to {\mathfrak{h}}^{\ast}
\]
 for the dual of the Lie algebras 
 ${\mathfrak{h}} \hookrightarrow {\mathfrak{g}}$
 is equivariant 
 with respect to the coadjoint action of $H$.  
In the spirit of the orbit method, 
 the restriction $\Pi|_H$ might be 
 interpreted 
 in terms of the image $\operatorname{pr}({\mathcal{O}})$
 as a union of $H$-coadjoint orbits, 
 suggesting how the restriction $\Pi|_H$ decomposes under $H$.

The expected correspondences may be illustrated as follows:
\begin{align*}
\text{unitary dual}\quad\widehat G \ni \Pi 
&\overset{\text{orbit method}}
{\leftarrow \cdots \rightarrow}
{\mathcal{O}}\subset {\mathfrak{g}}^{\ast}
\quad
\text{(coadjoint orbit)}
\\
\text{subgroup}\quad H \subset G 
&\hphantom{M}{\leftarrow \cdots \rightarrow}
\hphantom{M}\operatorname{pr} \colon {\mathfrak{g}}^{\ast} \to {\mathfrak{h}}^{\ast}
\quad
\text{projection}
\\
\text{$\pi|_H$ is $H$-admissible${}^{\ast}$}
&\hphantom{M}{\leftarrow\overset{?}\cdots\rightarrow}
\hphantom{M}\text{${\mathcal{O}}_{\pi} \hookrightarrow {\mathfrak{g}}^{\ast} 
\overset{\operatorname{pr}}{\to} {\mathfrak{h}}^{\ast}$ is proper.  }
\end{align*}
Here, 
 the question mark indicates
 a conjectural equivalence
 between $H$-admissibility
 of the restriction $\Pi|_H$ 
 and the properness of the moment map on the coadjoint orbit ${\mathcal{O}}$.

\begin{question}
\label{q:deco_orbit}
Suppose that $\Pi \in \widehat G$ 
 is a \lq\lq{geometric quantization}\rq\rq\
 of a coadjoint orbit ${\mathcal{O}} \subset {\mathfrak{g}}^{\ast}$
 in the sense of the orbit method.  
Let $H$ be a reductive subgroup of $G$.  
Is the following equivalence (i) $\Leftrightarrow$ (ii) valid?
\newline
(i)\enspace
The restriction $\Pi|_H$ is $H$-admissible.  
\newline
(ii)\enspace
The projection $\operatorname{pr} \colon {\mathfrak{g}}^{\ast} \to {\mathfrak{h}}^{\ast}$
 is a proper map
 when restricted to the coadjoint orbit ${\mathcal{O}}$.  
\end{question}

See \cite{DV10, KN03, xKN, P15}
 for some affirmative cases
 and related discussions.

Although beyond the scope of this article, 
 we note that for non-reductive subgroups,
 Duflo introduced the notions
 of a {\emph{weakly proper map}}, 
 which relaxes the properness condition
 appearing in condition (ii)
 of Question~\ref{q:deco_orbit}.  
See \cite{LOY23} for example.

\subsection{
Proper Action and Discrete Decomposability
}
\label{subsec:K17}
~~~\newline
We recall some basic notions from the theory 
 of infinite-dimensional representations of Lie groups, 
 not necessarily unitary.  
Let $\Pi$ be a continuous representation
 of a Lie group $G$
 on a complete, locally convex topological vector space $V$
 ({\it{e.g.}}, a Banach space), 
 and let $V^{\infty}$ denote the space
 of smooth vectors.  
Then $V^{\infty}$ is dense in $V$, 
 and carries a natural topology.  
The representation $\Pi$ induces a continuous representation $\Pi^{\infty}$
 on $V^{\infty}$, 
 and a dual representation $\Pi^{-\infty}$ on the continuous dual space
 $V^{-\infty}$ of $V^{\infty}$.

Now suppose that $G$ is a real reductive Lie group.  
Let ${\mathcal{M}}(G)$ denote the category 
 of smooth admissible representations
 of finite length with moderate growth, 
 which are defined on Fr{\'e}chet topological vector spaces \cite[Chap.\ 11]{W92}.  
Let $\operatorname{I r r}(G)$ denote 
 the set of irreducible objects in ${\mathcal{M}}(G)$.

While we do not go into the precise definition
 of the category ${\mathcal{M}}(G)$ here, 
 it is helpful to keep 
 in mind
 that $\operatorname{Irr}(G)$ contains
 the smooth representations $\Pi^{\infty}$
 of irreducible unitary representations $\Pi$ of $G$.  
This gives a natural injection:
\begin{equation}
\label{eqn:smooth_unitary}
\widehat G \hookrightarrow \operatorname{Irr}(G), \quad
  \Pi \mapsto \Pi^{\infty}.  
\end{equation}

Let $H$ be a closed subgroup of a Lie group $G$.  

\begin{definition}
\label{def:distinguished}
We say that $\Pi\in \operatorname{Irr}(G)$
 is an {\emph{$H$-distinguished representation}}
 of $G$, 
 if {$(\Pi^{-\infty})^H \ne \{0\}$}, 
 or equivalently, 
 by Frobenius reciprocity, 
\begin{equation*}
\operatorname{Hom}_G(\Pi,\, C^{\infty}(G/H)) \ne \{0\}.  
\end{equation*}
Let $\operatorname{Irr}(G)_H$ denote 
 the subset of $\operatorname{Irr}(G)$
 consisting
 of $H$-distinguished irreducible admissible representations, 
 and let $\widehat G_H := \widehat G \cap \operatorname{Irr}(G)_H$
 via the injection given in \eqref{eqn:smooth_unitary}.  
\end{definition}

In line with the philosophy of {\emph{compact-like actions}}
 discussed in Section~\ref{subsec:cpt_like}, 
 which links geometry
 and function spaces, 
 the following four properties are closely related:
\par
$\bullet$\enspace
the action of $G'$ on $X$ is proper;
\par
$\bullet$\enspace
the action of $G'$ on $X$ is {\textit{compact-like}};
\par
$\bullet$\enspace
the representation of $G'$ on $C^{\infty}(X)$ is {\textit{compact-like}};
\par
$\bullet$\enspace
for any $\Pi \in \operatorname{Irr}(G)$
 occurring in $C^{\infty}(X)$, 
 the restriction $\Pi|_{G'}$ is
\newline
\hphantom{MM}
 discretely decomposable.  
\newline
This philosophy holds under the additional assumption 
 of sphericity, 
 as formalised in Theorem~\ref{thm:HG_rest} below.

We briefly recall the definition of sphericity:
\begin{definition}
Let $X_{\mathbb{C}}$ be a connected complex manifold
 on which a complex reductive Lie group $G_{\mathbb{C}}$
 acts holomorphically.  
The action of $G_{\mathbb{C}}$ is said to be {\emph{spherical}} 
 if a Borel subgroup of $G_{\mathbb{C}}$ has an open orbit
 in $X_{\mathbb{C}}$.  
\end{definition}

\begin{example}
{\rm{(1)}}\enspace
The complexification $G_{\mathbb{C}}/H_{\mathbb{C}}$
 of a reductive symmetric space $G/H$ is spherical.  
\newline
{\rm{(2)}}\enspace
Any flag variety is spherical.  
\end{example}

\begin{theorem}
[{\cite{K17}}]
\label{thm:HG_rest}
Let $X=G/H$ be a reductive symmetric space.  
Suppose that $G'$ is a reductive subgroup of $G$, 
 and that its complexification $G_{\mathbb{C}}'$ acts spherically on $X_{\mathbb{C}}$.  
If the action of $G'$ on $X$ is proper, 
 then any irreducible $H$-distinguished unitary representation $\Pi$ of $G$
 is $G'$-admissible;
 in particular, 
 it decomposes discretely upon restriction to the subgroup $G'$.  
Moreover, the multiplicities are uniformly bounded:
\[
   \underset{\Pi \in \operatorname{Irr}(G)_H}\sup\,\,
   \underset{\pi \in \operatorname{Irr}(G')}\sup\,\,
   [\Pi|_{G'}:\pi]<\infty.  
\]
\end{theorem}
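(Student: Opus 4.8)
The plan is to split the conclusion into two logically independent parts and attack each with different machinery: (a) the qualitative \emph{discrete decomposability} of $\Pi|_{G'}$ together with the finiteness of each individual multiplicity, which will use the properness hypothesis; and (b) the \emph{uniform boundedness} of multiplicities across all of $\operatorname{Irr}(G)_H$, which will use only the sphericity hypothesis. Part (b) turns out to be the more elementary of the two, so I would dispose of it first.

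For part (a), I would begin by converting geometric properness into combinatorics. Taking $L=G'$ with $\theta$-stable representatives and applying the properness criterion of Theorem~\ref{thm:proper89}, properness of the $G'$-action on $X=G/H$ is equivalent to $\mathfrak{a}_H \cap W \cdot \mathfrak{a}_{G'}=\{0\}$ in $\mathfrak{a}$, i.e. to $\mu(H) \cap \mu(G')=\{0\}$ in the Cartan-projection picture. On the representation side I would invoke the admissibility criterion of \cite{KInvent94, K98b}, which reduces $G'$-admissibility of $\Pi|_{G'}$ to the vanishing of the intersection of an asymptotic cone $\operatorname{AS}(\Pi)$ attached to $\Pi$ with a cone $\mathcal{C}_{G'}$ determined by $G'$. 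The decisive input, and the place where the symmetric-space hypothesis enters, is that for an $H$-distinguished $\Pi$ realized in $C^{\infty}(G/H)$ (Definition~\ref{def:distinguished}) the cone $\operatorname{AS}(\Pi)$ is governed by the geometry at infinity of $G/H$, namely by $\mu(H)=W \cdot \mathfrak{a}_H$; feeding the properness identity into the criterion then forces $\operatorname{AS}(\Pi) \cap \mathcal{C}_{G'}=\{0\}$, which yields both discrete decomposability and finiteness of each $[\Pi|_{G'}:\pi]$.

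For part (b), I would argue by monotonicity. Since $\Pi$ is $H$-distinguished, Definition~\ref{def:distinguished} provides a nonzero $G$-embedding $\iota \colon \Pi \hookrightarrow C^{\infty}(G/H)$; post-composition with $\iota$ injects $\operatorname{Hom}_{G'}(\pi, \Pi|_{G'})$ into $\operatorname{Hom}_{G'}(\pi, C^{\infty}(X))$, so that $[\Pi|_{G'}:\pi] \le \dim \operatorname{Hom}_{G'}(\pi, C^{\infty}(X))$ for every $\pi \in \operatorname{Irr}(G')$. The right-hand side is the multiplicity of $\pi$ in the regular representation of $G'$ on $C^{\infty}(X)$, and the finite-multiplicity theorem for real reductive pairs, in its uniform-boundedness form, converts the sphericity of $G_{\mathbb{C}}'$ on $X_{\mathbb{C}}$ into a single finite constant bounding $\dim \operatorname{Hom}_{G'}(\pi, C^{\infty}(X))$ over all $\pi$. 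As this constant is independent of $\Pi$, it gives the asserted uniform bound over $\operatorname{Irr}(G)_H \times \operatorname{Irr}(G')$.

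The main obstacle is the middle step of part (a): showing that the asymptotic invariant $\operatorname{AS}(\Pi)$ controlling admissibility is genuinely confined to $W \cdot \mathfrak{a}_H$ for \emph{every} $H$-distinguished representation, uniformly and without examining them one at a time. This requires a careful asymptotic analysis of the generalized matrix coefficients of representations occurring in $C^{\infty}(G/H)$, and in particular a bridge between the split picture in which properness naturally lives (the Cartan-projection cones $\mu(H)$ and $\mu(G')$) and the compact, $K$-type picture in which the admissibility criterion is most transparent. It is precisely the symmetric structure of $X$ that is expected to align these two geometries at infinity, so that the single identity $\mathfrak{a}_H \cap W \cdot \mathfrak{a}_{G'}=\{0\}$ suffices to conclude $\operatorname{AS}(\Pi)\cap\mathcal{C}_{G'}=\{0\}$; making this alignment rigorous, rather than heuristic, is the crux of the argument.
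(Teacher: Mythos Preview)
The paper does not supply a proof of this theorem; it is quoted as a result from \cite{K17} and followed only by illustrative examples. So there is no in-paper argument to compare against directly.

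On its own merits, your two-part plan is sound and mirrors the logical architecture of the original argument in \cite{K17}. Part (b) is essentially complete as you have written it: the embedding $\Pi \hookrightarrow C^\infty(G/H)$ furnished by Definition~\ref{def:distinguished}, together with the bounded-multiplicity theorem for real spherical spaces (which the $G'_{\mathbb{C}}$-sphericity hypothesis on $X_{\mathbb{C}}$ is precisely designed to trigger for the $G'$-action on $X$), delivers a single finite constant dominating $[\Pi|_{G'}:\pi]$ independently of $\Pi$.

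For part (a) you have correctly located the crux and been honest about not resolving it. The admissibility criterion of \cite{K98b} is phrased on the compact side, via the asymptotic $K$-support $\operatorname{AS}_K(\Pi)\subset\sqrt{-1}\mathfrak{t}^*$, whereas the properness criterion of Theorem~\ref{thm:proper89} lives on the split side, in $\mathfrak{a}\subset\mathfrak{p}$; these are genuinely different cones in different spaces, and your proposal acknowledges but does not build the bridge between them. What remains is to show concretely that for any $\Pi$ realized in $C^\infty(G/H)$ the asymptotic $K$-support is controlled by the compact symmetric space $K/(H\cap K)$, and then to convert the vanishing $\mathfrak{a}_H\cap W\cdot\mathfrak{a}_{G'}=\{0\}$ into the required disjointness of cones on the $K$-side. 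That translation is where the symmetric-space hypothesis does its real work, and it is carried out in \cite{K17}; until you make it explicit, part (a) is a correct outline rather than a proof.
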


We now give three examples
 to illustrate this result.  
\begin{example}
[Standard Anti-de Sitter Manifolds]
Let $X$ be an odd-dimensional 
 anti-de Sitter space, 
 {\it{i.e.}}, 
\[
   X=G/H=SO(2n,2)/SO(2n,1).  
\] 
The subgroup $G':=U(n,1)$ acts properly on $X$, 
 and its complexification
 $G_{\mathbb{C}}'=G L(n+1, {\mathbb{C}})$ acts spherically
 on the complex manifold $X_{\mathbb{C}}=SO(2n+2, {\mathbb{C}})/SO(2n+1, {\mathbb{C}})$, 
 which is biholomorphic to the $(2n+1)$-dimensional complex sphere $S_{\mathbb{C}}^{2n+1}$.  
Therefore, 
 Theorem~\ref{thm:HG_rest} applies in this case.  
The corresponding discretely decomposable branching laws
 are explicitly obtained in \cite[Thm.\ 6.1]{KInvent94}.  
\end{example}

\begin{example}
[Pseudo-Riemannian Space Form of Signature $(8,7)$]
Consider a 15-dimensional manifold
 by 
\[
   X=G/H=SO(8,8)/SO(8,7).  
\]
Then $X$ is a pseudo-Riemannian manifold of signature $(8, 7)$, 
 with constant negative sectional curvature.  
The subgroup $G'={\mathrm{Spin}}(1,8)$ of $G$ acts properly on $X$, 
 and its complexification $G_{\mathbb{C}}'= {\mathrm{Spin}}(8, {\mathbb{C}})$
 acts spherically on $X_{\mathbb{C}}\simeq S_{\mathbb{C}}^{15}$.  
Thus, 
 Theorem~\ref{thm:HG_rest} applies here.  
The corresponding discretely decomposable branching laws
 for the restriction $SO(8,8) \downarrow Spin(1,8)$
 are explicitly obtained
 in \cite[Thm. 5.5]{K17} and \cite{STV18}.  
\end{example}

\begin{example}
[Indefinite K{\"a}hler Manifolds]
The homogeneous space 
\[
X=G/H=SO(2n,2)/U(n,1)
\]
 admits a natural indefinite K{\"a}hler structure.  
The subgroup $G'=SO(2n,1)$ acts properly on $X$, 
 and the complexified group
 $G_{\mathbb{C}}'=S O(2n+1, {\mathbb{C}})$ acts spherically
 on the complex manifold $X_{\mathbb{C}}= S O(2n+2, {\mathbb{C}})/G L(n+1,{\mathbb{C}})$.  
Hence, 
 Theorem~\ref{thm:HG_rest} applies here.  
A detailed account of the geometric setting and the discretely decomposable branching laws
 for the restriction $G \downarrow G'$ can be found
 in \cite[Sect.\ 6]{K09}, 
specifically for the case $n=2$.  
\end{example}

In the setting of Theorem~\ref{thm:HG_rest}, 
 let $X_{\Gamma} =\Gamma \backslash G/H$ be
 a standard locally symmetric space
  (Definition~\ref{def:standard}), 
 where $\Gamma$ is a torsion-free discrete subgroup of $G'$.  
Equipped with the pseudo-Riemannian structure inherited from 
 the symmetric space $X=G/H$, 
 the space $X_{\Gamma}$ provides a natural framework 
 for spectral analysis.  
In fact,   
Theorem~\ref{thm:HG_rest} serves 
 as a cornerstone for the analytic theory on standard locally symmetric spaces
 $X_{\Gamma}$, 
 as developed in the monograph \cite{KaK25}.

\section{Two Quantifications of Proper Actions}
\label{sec:quantify}

Two notions that may appear unrelated at first glance---originating
 respectively from joint works with Kassel \cite{KasK16}
 and Benoist \cite{BeKoI, BeKoII}---in fact arose from distinct
 and independent motivations. 
In this section, however, we reinterpret them from a unified perspective:
 as two approaches to {\emph{quantifying the properness}} of group actions.

$\bullet$\enspace The notion of {\emph{sharpness}} provides a means
 of measuring how strongly a given action satisfies the condition of properness
 (see Section~\ref{subsec:sharp}).

$\bullet$\enspace 
The other, 
 based on dynamical volume estimates, 
 quantifies the extension
 to which an action fails to be proper
 (see Sections~\ref{subsec:measure}---\ref{subsec:Lp}).

\subsection{Sharp Action}
\label{subsec:sharp}
~~~\newline
As a strengthening of the properness condition
 for group actions,
 we recall the notion of {\emph{sharpness}}, 
 introduced in \cite{KasK16}.

Let $G$ be a linear reductive Lie group.  
Let 
\[
   \mu \colon G \to \overline{{\mathfrak{a}}_+}
\]
denote the Cartan projection 
 associated with the Cartan decomposition $G=K \overline{A_+} K$, 
 as defined in \eqref{eqn:Cartanpr}.

Let $H$ be a closed subgroup, 
 and let $X:=G/H$ be the associated homogeneous space.

\begin{definition}
[Strongly Proper Action: Sharpness Constants]
\label{def:sharp1}
Let $\Gamma$ be a discrete subgroup of $G$.  
We say that $\Gamma$ is {\emph{sharp}} for $X$
if there exist constants $c \in (0,1]$
 and $C \ge 0$
 such that
\[
\|\mu(\gamma)-\mu(H)\| \ge c \|\mu(\gamma)\|-C
\]
holds for all $\gamma \in \Gamma$.  
In this case, 
 the quotient space $X_{\Gamma}:=\Gamma \backslash G/H$
 is called a {\emph{sharp quotient}} of $X$.

The constants $(c, C)$ are called the sharpness constants. 
\end{definition}

This notion can be reformulated
 in terms of the {\emph{asymptotic cone}} 
(also known as the {\emph{limit cone}}), 
 which we recall now.

Let $V$ be a finite-dimensional vector space over ${\mathbb{R}}$, 
 and let $S$ be a subset of $V$.  
\begin{definition}
[Asymptotic Cone, Limit Cone]
\label{def:as_cone}
The {\emph{asymptotic cone}} of $S$, 
 also referred to as the {\emph{limit cone}}, 
 is a closed cone in $V$ 
 consisting of all limit points of sequences of the form
\[
\underset{n \to \infty} \lim \varepsilon_n x_n,
\]
 where $x_n \in S$
 and $\varepsilon_n >0$
 is a sequence converging to 0.  
We denote this cone by $S \infty$.  
\end{definition}

The following lemma is an immediate consequence
 of Definition~\ref{def:pitch} of the relation $\pitchfork$.

\begin{lemma}
\label{lem:asym_tube}
Let $S$ and $T$ be subsets of the vector space in $V$.  
If the asymptotic cones satisfy $S \infty \cap T \infty =\{0\}$, 
 then $S \pitchfork T$ in $V$, 
 where $V$ is regarded as an additive group.  
\end{lemma}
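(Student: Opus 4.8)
The plan is to argue by contraposition: assuming $S \not\pitchfork T$, I will manufacture a nonzero vector lying in both asymptotic cones. First I would unwind the definition of $\pitchfork$ (Definition~\ref{def:pitch}) in the additive group $V$. Negating $S \pitchfork T$ produces a compact set $C \subset V$ for which $S \cap (C + T + C)$ fails to be relatively compact. Since $V$ is finite-dimensional, relative compactness coincides with boundedness, so this yields a sequence $x_n \in S \cap (C + T + C)$ with $\|x_n\| \to \infty$. Writing $x_n = c_n + t_n + c_n'$ with $c_n, c_n' \in C$ and $t_n \in T$, the compactness of $C$ forces $\{c_n\}$ and $\{c_n'\}$ to remain bounded, whence $\|t_n\| \to \infty$ as well.

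Next I would extract the limiting direction of this unbounded sequence. Setting $\varepsilon_n := 1/\|x_n\|$, one has $\varepsilon_n \to 0^+$ and $\|\varepsilon_n x_n\| = 1$, so $\varepsilon_n x_n$ lies on the unit sphere of $V$, which is compact. Passing to a subsequence gives $\varepsilon_n x_n \to v$ with $\|v\| = 1$. By the very definition of the asymptotic cone (Definition~\ref{def:as_cone}), applied to the sequence $x_n \in S$ and scalars $\varepsilon_n \to 0^+$, this exhibits $v \in S\infty$ with $v \neq 0$.

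The key step is to show that the same $v$ also belongs to $T\infty$. Because $\varepsilon_n \to 0$ while $\|c_n\|$ and $\|c_n'\|$ stay bounded, the rescaled error terms satisfy $\varepsilon_n c_n \to 0$ and $\varepsilon_n c_n' \to 0$. Consequently $\varepsilon_n t_n = \varepsilon_n x_n - \varepsilon_n c_n - \varepsilon_n c_n' \to v$. Since $t_n \in T$ and $\varepsilon_n \to 0^+$, Definition~\ref{def:as_cone} now gives $v \in T\infty$ as well. Thus $v \in S\infty \cap T\infty$ is nonzero, contradicting the hypothesis $S\infty \cap T\infty = \{0\}$, and therefore $S \pitchfork T$.

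The only genuine obstacle is the observation that the compact ``thickening'' contributed by $C$ does not survive the rescaling: once one sees that multiplying a bounded quantity by $\varepsilon_n \to 0$ annihilates it, the unit direction vector of the unbounded sequence is forced to be a limit direction of $S$ and of $T$ simultaneously, and the contradiction is immediate. No properness machinery beyond the definitions is required, precisely because $V$ is abelian and finite-dimensional, so that the tube $C + T + C$ collapses to a bounded perturbation of $T$ and boundedness substitutes for relative compactness throughout.
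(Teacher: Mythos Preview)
Your argument is correct and is exactly the natural verification that the paper has in mind; the paper itself does not write out a proof, declaring the lemma ``an immediate consequence of Definition~\ref{def:pitch},'' and your contrapositive sequence-extraction is precisely how one unpacks that immediacy.
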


We next restate Definition~\ref{def:sharp1}
in an equivalent form.  

\begin{definition}
[Sharp Action]
\label{def:sharp2}
Let $\Gamma$ be a discrete subgroup of $G$.  
The action of $\Gamma$ on $X$ is called {\emph{sharp}}
 if 
\[
   \mu(\Gamma) \infty \cap \mu(H) \infty =\{0\}.  
\]
\end{definition}

If the action of $\Gamma$ on $X=G/H$ is sharp, 
 then it follows from Lemma~\ref{lem:asym_tube}
 that 
\[
   \text{$\mu(\Gamma) \pitchfork \mu(H)$ in ${\mathfrak{a}}$.}  
\]
Hence, 
 by the properness criterion, 
 as stated in Theorem~\ref{thm:proper96}, 
 the $\Gamma$-action on $X$ is proper.

The converse implication
\[
   \text{proper action} \Rightarrow \text{sharp action}
\]
does not hold in general.  
However, 
 there are many interesting examples
 in which sharpness does follow:

--- When $H$ is reductive, 
 any standard quotient (Definition~\ref{def:standard}) $X_{\Gamma}$ is sharp.  

--- Remarkably, 
 Kassel and Tholozan have announced in a recent preprint \cite{KT-pre}
 an affirmative solution
 to the Sharpness Conjecture \cite[Conj.\ 4.12]{KasK16}, 
 which asserts that any {\emph{cocompact}} discontinuous group for $G/H$
 is sharp.

\vskip 1pc
An advantage of the notion of sharpness
 is that it becomes particularly effective
 in the study of {\emph{deformations}}
 of discontinuous groups.

In contrast to the Selberg--Weil rigidity theorem 
 for the Riemannian symmetric space $G/K$, 
 irreducible pseudo-Riemannian symmetric spaces
 may admit cocompact discontinuous groups
 that are {\emph{not locally rigid}}, 
 even in arbitrarily high dimensions.  
This phenomenon was first observed in the early 90s
 (see \cite[Remarks 2 and 3]{Ko93})
 for the group manifold $G$, 
 viewed as a homogeneous space $(G \times G)/\operatorname{diag} G$.

A major difficulty in studying deformations
 of discontinuous groups lies in the fact
 that,
 when $H$ is noncompact, 
 small deformations of a discrete subgroup
 can easily destroy the properness of the action.  
In the context of 3-dimensional compact anti-de Sitter manifolds, 
 Goldman \cite{G85} conjectured
 that any small deformation 
 of a standard cocompact discontinuous group 
 preserves proper discontinuity.  
This conjecture
 was proved by the present author
 \cite{K98}, 
 based on the properness criterion, 
 as stated in Theorem~\ref{thm:proper96}.

The idea introduced in \cite{K98}, 
 further developed by Kassel \cite{Kas12} 
 and related works, 
 exploits the fact
 that the limit cone $\mu(\Gamma)\infty$
 remains well-controlled
 under small deformations of $\Gamma$.  
Consequently,  
 proper discontinuity
 is maintained through small deformation---under a mild condition---provided 
 that the initial group is a {\emph{sharp}}
 discontinuous group.

The notion of sharpness also plays a significant role 
 in other problems, 
 such as the orbit counting problem
 for properly discontinuous actions of $\Gamma$
 on pseudo-Riemannian symmetric spaces $X$.  
This is exemplified in the construction of the {\emph{stable spectrum}}
 for $\Gamma \backslash X$
 in \cite{KasK16}.  
On the other hand, 
 sharpness also proves useful 
 in addressing the existence problem
 of cocompact discontinuous groups, 
 as seen in \cite{KT-pre}.


\subsection{Measure-Theoretic Approach to Proper Actions}
\label{subsec:measure}
~~~\par
Whereas the previous section discussed
 the notion of sharp actions
 as a quantitative strengthening of properness, 
 the present section takes
 the opposite perspective:
 it introduces a quantitative method
 to measure the extent to which a group action fails
 to be proper.

We begin with a reformulation
 of the definition of proper actions
 (Definition-Lemma~\ref{def:properaction}), 
 using {\emph{measure-theoretic conditions}}
 in lieu
 of the original topological definition.

Let $G$ be a locally compact group, 
 and let $X$ be a locally compact space
 equipped with a continuous $G$-action.  
Suppose further that $X$ carries a Radon measure $\mu$.  
Then, 
 for every compact subset $S \subset X$, 
 the function
\[
   G \to {\mathbb{R}}, \quad
   g \mapsto \operatorname{vol}(S \cap g S):= \mu(S \cap g S)
\]
 is continuous with respect to the topology on $G$.  

\centerline{%
\raisebox{-3em}{\includegraphics[scale=.22]{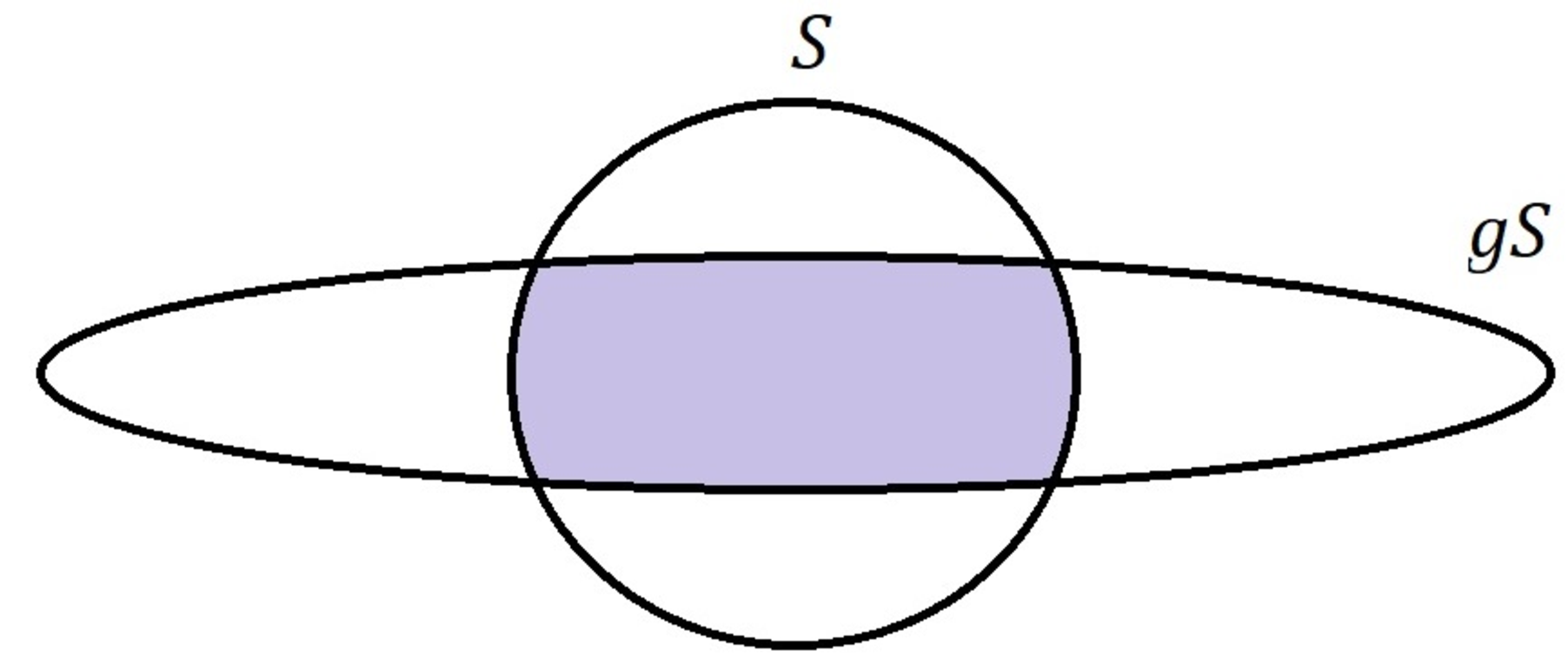}}
}

\begin{lemma}
\label{lem:doi_250427}
The following two conditions are equivalent:
\newline
{\rm{(i)}}\enspace
The action of $G$ on $X$ is proper;
\newline
{\rm{(ii)}}\enspace
For every compact subset $S \subset X$, 
 the function $\operatorname{vol}(S \cap g S)$ has compact support
 on $G$.  
\end{lemma}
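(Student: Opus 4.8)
The plan is to prove the two implications separately, observing at the outset that the reverse implication tacitly requires $\mu$ to have \emph{full support}, i.e. $\mu(\Omega)>0$ for every nonempty open set $\Omega\subset X$; without this (ii) would hold vacuously for $\mu\equiv 0$ while properness could fail, and in the intended applications (the invariant measure on $G/H$) full support does hold. Throughout I write $G_S:=\{g\in G: gS\cap S\neq\emptyset\}$, so that by Definition--Lemma~\ref{def:properaction}(iv) properness is exactly the assertion that $G_S$ is compact for every compact $S\subset X$. I will also use the standard fact that $G_S$ is closed whenever the action is continuous and $S$ is compact, and that $f(g):=\operatorname{vol}(S\cap gS)$ is continuous (as noted just before the lemma), so that $\operatorname{supp}f=\overline{\{g: f(g)>0\}}$.

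For (i)$\Rightarrow$(ii), fix a compact $S$. Then $\{g: f(g)>0\}\subset\{g: S\cap gS\neq\emptyset\}=G_S$, and $G_S$ is compact by properness, hence closed; therefore $\operatorname{supp}f\subset G_S$ is a closed subset of a compact set and so is itself compact. This direction uses no hypothesis on $\mu$.

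The substantive direction is (ii)$\Rightarrow$(i). Fix a compact $S$; I must show $G_S$ is compact. Using local compactness of $X$, I would first enlarge $S$ to a compact neighbourhood $S'$, so that $S\subset\operatorname{int}(S')$, and apply (ii) to $S'$, giving that $f'(g):=\operatorname{vol}(S'\cap gS')$ has compact support. The heart of the argument is the claim that $G_S\subset\{g: f'(g)>0\}$. To prove it, take $g\in G_S$ together with $x\in S$ satisfying $gx\in S$, and set $W:=\operatorname{int}(S')\cap g\cdot\operatorname{int}(S')$. Since $gx\in S\subset\operatorname{int}(S')$ and $x\in\operatorname{int}(S')$ forces $gx\in g\cdot\operatorname{int}(S')$, the point $gx$ lies in $W$, so $W$ is a nonempty open set, and plainly $W\subset S'\cap gS'$. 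By the full-support hypothesis $\mu(W)>0$, whence $f'(g)\geq\mu(W)>0$, proving the claim. Consequently $G_S\subset\operatorname{supp}f'$, which is compact; since $G_S$ is closed, it is compact, and properness follows.

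I expect the main obstacle to lie precisely in the gap exploited above: the defining condition $gS\cap S\neq\emptyset$ of $G_S$ records a \emph{nonempty} intersection, whereas condition (ii) only detects intersections of \emph{positive measure}, and a compact $S$ may well be $\mu$-null (a point, or a thin submanifold). The device of replacing $S$ by a compact neighbourhood $S'$ with nonempty interior, combined with the full-support property of $\mu$, is what bridges this gap; the delicate points are to verify that the auxiliary open set $W$ is genuinely nonempty and contained in $S'\cap gS'$, and to record that $G_S$ is closed, so that containment in a compact set yields compactness rather than mere relative compactness.
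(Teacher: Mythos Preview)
Your proof is correct and follows essentially the same approach as the paper's: both directions match, and for (ii)$\Rightarrow$(i) you enlarge $S$ to a compact neighbourhood $S'$ (the paper takes $S'=\overline{V}$ for a relatively compact open $V\supset S$) and observe that for $g\in G_S$ the intersection $S'\cap gS'$ contains a nonempty open set, hence has positive measure. The only difference is cosmetic---you argue directly while the paper argues by contraposition---and you are in fact slightly more careful than the paper in making the full-support hypothesis on $\mu$ explicit and in recording that $G_S$ is closed.
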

\begin{proof}
(i) $\Rightarrow$ (ii).  
The function
 $g \mapsto \operatorname{vol}(S \cap g S)$ is continuous, 
 and its support is contained in 
\[
   \operatorname{Supp} \operatorname{vol}(S \cap g S) 
   \subset 
   \{g \in G: S \cap g S \ne \emptyset\}=:G_S.  
\]
Hence, 
 if the $G$-action on $X$ is proper, 
 ({\it{i.e.,}} $G_S$ is compact for all compact $S \subset X$), 
 then the function has compact support.  
\newline
(ii) $\Rightarrow$ (i). 
Conversely,
 suppose that the action of $G$ on $X$ is not proper.  
Then there exists a compact subset $S \subset X$
 such that 
 $G_S$ is not compact.

Choose an open,
 relatively compact subset $V \subset X$
 with $S \subset V$, 
 and let $S'$ be the closure of $V$, 
 which is compact.  
For each $g \in G_S$, 
 we have 
\[
  \emptyset \ne S \cap g S \subset V \cap g V \subset S' \cap g S'.  
\]
Since $S \cap g S$ is open 
 and
 has positive measure
 (as $\mu$ is a Radon measure), 
 it follows that $\mu(V \cap g V)>0$.  
Hence, 
\[
\operatorname{Supp}(\operatorname{vol}(S' \cap g S'))
 \supset G_S, 
\]
 which is not compact.  
Thus, 
 by contraposition, 
 (ii) implies (i).  
\end{proof}

We now focus on the case
 when the action is {\emph{not}} proper.

By the preceding lemma, 
 there exists a compact subset $S \subset X$
 such that the volume function
\[
   g \mapsto \operatorname{vol}(S \cap g S)
\]
does not have compact support.

To quantitatively assess
 the degree of non-properness quantitatively, 
 we examine how this function behaves
 at the \lq\lq{infinity}\rq\rq\
 in $G$.

We may expect that the action of $G$ on $X$ is 
 {\emph{close to being proper}}
 if the volume function $\operatorname{vol}(g S \cap S)$ decays 
 \lq\lq{rapidly as $g \in G$ tends to infinity}\rq\rq.

\vskip 1pc
\subsection
{An Example of Volume Estimate: $\operatorname{vol}(S \cap g S)$}
~~~\newline
To illustrate this principle, 
 consider a simple yet instructive example
 showing the asymptotic behavior
 of $\operatorname{vol}(S \cap g S)$.

Let 
\( G := {\mathbb{R}}\)~ act
 on ${\mathbb{R}}^2\setminus \{(0,0)\}$
 by
\[
   (x,y) \mapsto (e^t x, e^{-t}y), 
   \quad
   \text{where $t\in {\mathbb{R}}$}.  
\]
As observed in Example~\ref{ex:R2a}, 
 this action is {\emph{free}}
 and {\emph{all orbits are closed}}, 
 but it is not {\emph{proper}}.  
In particular, 
 the $G$-action on the entire space $X:={\mathbb{R}}^2$ is not proper.  
{}From a measure-theoretic point of view, 
 $X$ and $X \setminus \{(0,0)\}$
 are equivalent.  
To understand failure of properness quantitatively, 
 consider the asymptotic behavior of the function
\[
  t \mapsto \operatorname{vol}(S \cap t \cdot S), 
\]
 where the translate of a compact subset $S \subset G$ by $t\in {\mathbb{R}}$
 is defined by:
\[
  t \cdot S:=\{(e^t x, e^{-t} y):(x,y) \in S\}.  
\]

\begin{claim}
\label{lem:rec_area}
If the origin $o=(0,0)$ is an interior point of a compact subset $S \subset {\mathbb{R}}^2$, 
 then there exist constants $C_1$, $C_2 >0$
 such that 
\[
  C_1 e^{-|t|} 
 \le \operatorname{vol}(S \cap t \cdot S)
 \le C_2 e^{-|t|}
\]
for all $t \in {\mathbb{R}}$.  
\end{claim}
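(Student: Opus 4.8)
The plan is to exploit the fact that the given action is measure-preserving and then to reduce the estimate to an explicit model computation via a squeezing argument. First I would note that the map $(x,y) \mapsto (e^t x, e^{-t} y)$ has Jacobian determinant $e^t \cdot e^{-t} = 1$, so Lebesgue measure on ${\mathbb{R}}^2$ is $G$-invariant; in particular the action is a bijection respecting inclusions, which supplies the monotonicity we shall use. Unwinding the definition, a point $(u,v)$ lies in $S \cap (t \cdot S)$ precisely when $(u,v) \in S$ and $(e^{-t} u, e^{t} v) \in S$, so that
\[
   S \cap (t \cdot S) = \{(u,v) \in S : (e^{-t} u,\, e^{t} v) \in S\}.
\]

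Next I would carry out the model computation for an axis-parallel square $Q_a := [-a,a]^2$. Here $(u,v) \in Q_a \cap (t \cdot Q_a)$ if and only if $|u| \le \min(a, e^{t} a)$ and $|v| \le \min(a, e^{-t} a)$, whence a direct calculation (splitting into the cases $t \ge 0$ and $t \le 0$) gives
\[
  \operatorname{vol}(Q_a \cap t \cdot Q_a) = 4 a^2 e^{-|t|}
  \qquad \text{for all } t \in {\mathbb{R}}.
\]
This is the step in which the exact exponential rate $e^{-|t|}$ emerges, and it is really the heart of the claim; everything else is comparison.

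Finally I would squeeze. Since $o$ is an interior point of $S$, there is $r>0$ with $Q_r \subset S$, and since $S$ is compact it is bounded, so $S \subset Q_R$ for some $R \ge r$. Using that the action respects inclusions together with the monotonicity of Lebesgue measure, one obtains
\[
  \operatorname{vol}(Q_r \cap t \cdot Q_r) \le \operatorname{vol}(S \cap t \cdot S) \le \operatorname{vol}(Q_R \cap t \cdot Q_R),
\]
and substituting the model computation yields $4 r^2 e^{-|t|} \le \operatorname{vol}(S \cap t \cdot S) \le 4 R^2 e^{-|t|}$. Taking $C_1 := 4 r^2$ and $C_2 := 4 R^2$ completes the argument.

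As for the main obstacle, there is essentially no serious analytic difficulty, precisely because the action is linear and volume-preserving. The only genuinely load-bearing observation is that the \emph{lower} bound requires $o$ to lie in the interior of $S$: a compact set disjoint from a neighbourhood of the origin could exhibit strictly faster decay (indeed empty intersection for large $|t|$), so I would take care to invoke interiority exactly at the inclusion $Q_r \subset S$.
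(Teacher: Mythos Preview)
Your proposal is correct and follows essentially the same approach as the paper: an exact computation for axis-parallel squares yielding $4a^2 e^{-|t|}$, followed by the squeezing argument $D_r \subset S \subset D_R$ via interiority and compactness. Your write-up is in fact slightly more explicit than the paper's (you spell out the Jacobian, the membership condition, and the role of interiority for the lower bound), but the underlying argument is identical.
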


\begin{proof}
We begin with the case 
 where $S$ is the square
\[
   D_R:=\{ (x,y) \in {\mathbb{R}}^2:|x| \le R, |y| \le R\}.
   \hskip3.5em\raisebox{-1.5em}{\smash{\includegraphics[scale=0.12]{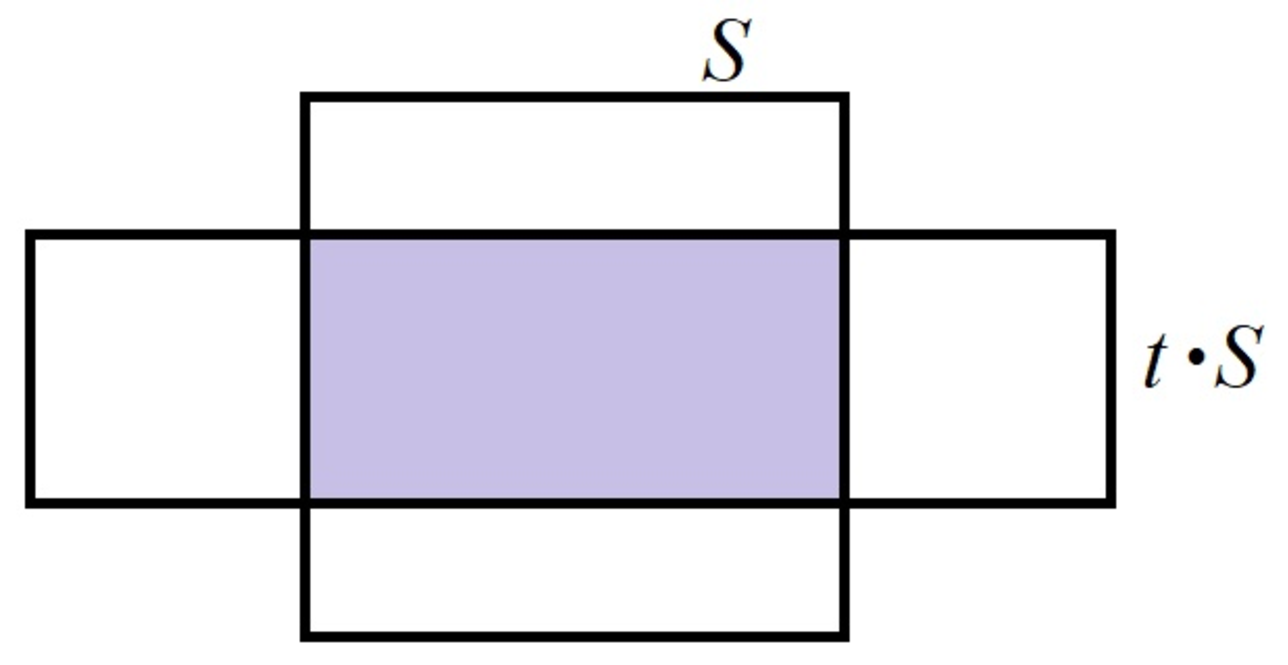}}}
\]
A direct computation shows that 
\[
\operatorname{vol}(t \cdot D_R \cap D_R) = 4R^2 e^{-|t|} = \operatorname{vol}(D_R) e^{-|t|}.
\]

Now, 
 suppose $S$ is a compact set containing the origin
 as an interior point.  
Then there exist constants $0 < r < R$
 such that $D_r \subset S \subset D_R$.  
It follows that 
\[
  \operatorname{vol}(D_r \cap t \cdot D_r)
  \le 
  \operatorname{vol}(S \cap t \cdot S)
  \le 
  \operatorname{vol}(D_R \cap t \cdot D_R).  
\]
Using the earlier formula, 
\[
  \operatorname{vol}(D_r) e^{-|t|}
  \le 
  \operatorname{vol}(t \cdot S \cap S)
  \le 
  \operatorname{vol}(D_R) e^{-|t|}.  
\]
This completes the proof.  
\end{proof}

\subsection
{Function $\rho_V$ and Constant $p_V$}
~~~\par
The previous example extends naturally to higher dimensions.  
Before formulating this generalization, 
 we recall the function $\rho_V$,
 which is associated with a finite-dimensional representation
 of a Lie algebra on a vector space, 
 introduced in \cite{BeKoI, BeKoII}.

Let ${\mathfrak{h}}$ be a Lie algebra, 
 and suppose
\[
 \tau \colon {\mathfrak{h}} \to \operatorname{End}_{\mathbb{R}}(V)
\]
 is a representation of ${\mathfrak{h}}$
 on a finite-dimensional real vector space $V$.  
We define a non-negative function
\begin{equation}
\label{eqn:rhoV}
 \rho_V \colon {\mathfrak{h}} \to {\mathbb{R}}_{\ge 0}
\end{equation}
as follows:
For each $Y \in {\mathfrak{h}}$, 
 let $\{\lambda_1, \dots, \lambda_n\}$ be the multiset
 of generalized eigenvalue, 
 of $\tau(Y)$, 
 viewed as a complex-linear operator
 on $V_{\mathbb{C}} := V \otimes_{\mathbb{R}} {\mathbb{C}}$.  
Then define
\[
   \rho_V(Y):= \frac 1 2 \sum_{j=1}^n |\operatorname{Re}\lambda_j|.  
\]

Now assume that ${\mathfrak{h}}$ is an algebraic Lie algebra, 
 and that $\tau \colon {\mathfrak{h}} \to \operatorname{End}_{\mathbb{R}}(V)$
 is an algebraic representation.  
Let ${\mathfrak{a}} \subset {\mathfrak{h}}$ be a maximally split abelian subalgebra.  
Then the function $\rho_V$ is entirely determined 
 by its restriction to ${\mathfrak{a}}$, 
 and we have
\[
\rho_V(Y)=\sum_{j=1}^{n} |\lambda_j(Y)|\quad \text{for $Y \in {\mathfrak{a}}$}
\]
since $\tau(Y) \in \operatorname{End}_{\mathbb{R}}(V)$
 is diagonalizable in this case.

\begin{example}
Let ${\mathfrak{h}}$ be a semisimple Lie algebra.  
For the adjoint representation 
\[
  \operatorname{ad} \colon 
 {\mathfrak {h}} \to \operatorname{End}({\mathfrak{h}}),
\]
 the function $\rho_{\operatorname{ad}}$ coincides
 with twice the \lq\lq{usual $\rho$-function}\rq\rq\ 
 on the positive Weyl chamber $\overline{\mathfrak{a}_+}$;
 that is,
\[
   \rho_{\operatorname{ad}}(Y)=2 \rho(Y)
   = \sum_{\alpha \in \Sigma^+({\mathfrak{g}}, {\mathfrak{a}})} \alpha(Y)
\quad
\text{for $Y \in \overline{{\mathfrak{a}}_+}$}.  
\]
It is worth noting
 that while the \lq\lq{usual $\rho$-function}\rq\rq\ is linear, 
 our function $\rho_{\operatorname{ad}}$ is only {\emph{piecewise linear}}.  
\end{example}

We now consider the ratio between two $\rho$-functions:
\par\hphantom{M}
$\bullet$\enspace
one associated with a given representation $(\tau, V)$, 
 and
\par\hphantom{M}
$\bullet$\enspace
the other with the adjoint representation.

\begin{definition}
[The Invariant $p_V$]
\label{def:pV}
Let $(\tau, V)$ be an algebraic representation of ${\mathfrak{h}}$.  
We define the invariant $p_V$ by 
\[
   p_V:= \underset{Y \in {\mathfrak{h}} \setminus \{0\}}{\max}
\frac{\rho_{\mathfrak{h}}(Y)}
     {\rho_V(Y)}.  
\]
If ${\mathfrak{a}} \subset {\mathfrak{h}}$ is a maximally split abelian subalgebra, 
 this simplifies to
\[
p_V=
\underset{Y \in {\mathfrak{a}} \setminus \{0\}}{\max}
\frac{\rho_{\mathfrak{h}}(Y)}
     {\rho_V(Y)}.
\]
In terms of eigenvalues, 
 this becomes
\[
p_V
=
\underset{Y \in {\mathfrak{a}} \setminus \{0\}}{\max}
\frac{\sum |\text{eigenvalues of $\operatorname{ad}(Y) \in \operatorname{End}({\mathfrak{h}})$}|}
{\sum |\text{eigenvalues of $\tau(Y) \in \operatorname{End}(V)$}|}.
\]
\end{definition}

\begin{example}
\label{ex:pV_sl2}
Consider the standard representation
 of ${\mathfrak{h}}={\mathfrak{sl}}(2,{\mathbb{R}})$
 on $V={\mathbb{R}}^2$.  
Let 
\[
   {\mathfrak{a}}={\mathbb{R}}H, \quad
   H:=\begin{pmatrix} 1 & 0 \\ 0 & -1\end{pmatrix}.  
\]
A straightforward computation shows
\begin{align*}
  \rho_V(t H)=& \frac 1 2 (|t|+|-t|)=|t|, 
\\
  \rho_{\operatorname{ad}}(t H)=& \frac 1 2 (|2t|+0+|-2t|)=2|t|, 
\end{align*}
Therefore, 
 the invariant $p_V$ is: 
\[
  p_V=\underset{t \ne 0} \max \frac{2 |t|}{|t|} =2.  
\]
\end{example}

Before explaining the meaning of the invariant $p_V$, 
 we introduce the concept
 of an {\emph{almost $L^p$ function}},  
 which helps clarify the broad picture.

\medskip
\subsection{Almost $L^p$ Function}
~~~\par
Let $Z$ be a locally compact space
 equipped with a Radon measure.  

\begin{definition}
[Almost $L^p$ Function]
\label{def:Lpfn}
A measurable function $f$ is said to be 
 {\emph{almost}} $L^p$
 if 
\[
   f \in \bigcap_{\varepsilon >0} L^{p+\varepsilon} (Z).  
\]
\end{definition}

\begin{example}
Let $D$ be the unit disk, equipped with the Poincar{\'e} metric
\[
  ds^2=\frac{4(d x^2+ d y^2)}{(1-x^2-y^2)^2}, 
\]
and let $\Delta$ be the corresponding Laplace-Beltrami operator.  
We define the function $p(\lambda)$ by 
\[
   p(\lambda)
  =
  \begin{cases}
  \frac{2}{1-\sqrt{1-4\lambda}}
  \quad
  &\text{for $0 \le \lambda \le \frac 1 4$}, 
\\
  2
  &\text{for $\frac 1 4 \le \lambda$}.  
  \end{cases}
\]
Suppose that $f \in C^{\infty}(D)$ is an eigenfunction 
 of $\Delta$, 
 satisfying:
\[
   \Delta f = \lambda f
\]
for some $\lambda \ge 0$, 
 and suppose further that $f$ is $SO(2)$-finite.  
Then $f$ is almost $L^{p(\lambda)}$. 
Here, 
 a smooth function $f$ is said to be $SO(2)$-{\emph{finite}}
 if the complex vector space $\operatorname{span}_{\mathbb{C}}\{f(k(x,y)):k \in SO(2)\}$ is finite-dimensional.   
\end{example}

If $p \le p'$, 
 then clearly:
\[
  \text{$f$ is almost $L^p$}
  \Rightarrow
  \text{$f$ is almost $L^{p'}$.  }
\]

Hence, 
 if a function $f$ is almost $L^p$
 for some exponent $p$, 
 then there exists a {\emph{minimal}} (or {\emph{optimal}}) exponent $q \le p$
 such that $f$ is almost $L^q$, 
 in the sense
 that 
\begin{align*}
q=\,&\text{$\inf\{p'>0:\text{$f \in L^{p'+\varepsilon}(Z)$ for all $\varepsilon>0$}\}$}
\\
=\,&\text{$\min\{p'>0: f \in L^{p'+\varepsilon}(Z)\text{ for all $\varepsilon>0$}\}$}.  
\end{align*}

\medskip 
\subsection{Optimal $L^p$-Exponent $q(G;X)$}
~~~\par
Suppose that a unimodular, locally compact group $G$ acts continuously
 on a locally compact space $X$, 
 equipped with a Radon measure.  
We now introduce the following invariant 
 associated with this group action,
 denoted by $q(G;X)$, 
 which measures the optimal decay rate
 of the volume function.  
\begin{definition}
[Optimal $L^p$-Exponent $q(G;X)$]
\label{def:qGX}
The invariant $q(G;X)$ is defined 
 to be the optimal constant $q>0$
 such that, for every compact subset $S \subset X$, 
 the function
\[
  \text{$g \mapsto \operatorname{vol}(S \cap g S)$}
\]
 is an almost $L^q$ function on $G$.  
In other words, 
\[
    \operatorname{vol}(S \cap g S) 
    \in \bigcap_{\varepsilon >0} L^{q+\varepsilon}(G).  
\]
\end{definition}

A general question is the following:
\begin{problem}
\label{prob:qGX}
Find an explicit formula of $q(G;X)$
 in terms of geometric 
 or representation-theoretic data associated with the action of $G$ on $X$.  
\end{problem}

\begin{example}
[$q(G;V)$ for $G=SL(2, {\mathbb{R}})$ acting
 on $V={\mathbb{R}}^2$]
\label{ex:SL2qX}
Consider the standard action of $G=SL(2, {\mathbb{R}})$
 on $V={\mathbb{R}}^2$.  
Then $q(G;V)=2$.  
Let us explain
 why this holds.

Recall the Cartan decomposition $G=KAK$, 
 with $g=k(\theta_1) a(t)k(\theta_2)$, 
 where 
\[
   \text{$a(t)=\begin{pmatrix} e^t & 0 \\ 0 & e^{-t}\end{pmatrix}$
 and $k(\theta)=\begin{pmatrix} \cos \theta & -\sin \theta \\ \sin \theta & \cos \theta\end{pmatrix}$}.  
\]
As seen in Example~\ref{ex:R2b}, 
 the action of $A$ on ${\mathbb{R}}^2$ is given by 
\[
(x,y) \mapsto (e^tx, e^{-t}y).  
\]
Now we take $S \subset {\mathbb{R}}^2$
 to be a $K$-invariant compact subset
 ({\it{i.e.,}} $S$ is rotationally invariant), 
 and observe that under the Cartan decomposition $G=KAK$, 
 the volume function satisfies
\[
   \operatorname{vol}(S \cap g S)
   =
   \operatorname{vol}(S \cap k(\theta_1) a(t) k(\theta_2) S)
   =
   \operatorname{vol}(S \cap a(t) S)
   \le 
   Ce^{-|t|},   
\]
by Claim~\ref{lem:rec_area}.

The Haar measure on $G=SL(2,{\mathbb{R}})$, 
 expressed via  the Cartan decomposition, 
 is given by
\[
   \sinh (2t) d t d \theta_1 d\theta_2.  
\]
Therefore, 
 the function $\operatorname{vol}(S \cap gS)$ belongs to $L^p(G)$
 for any $p >2$.  
Since:
\[
  \text{$\int_{\mathbb{R}} e^{-p t} \sinh(2t) d t < \infty$
        if and only if $p >2$.}  
\]
Thus, 
 we have $q(X) \le 2$.  
Conversely, 
 by Claim~\ref{lem:rec_area} again, 
 there exists a compact subset $S \subset {\mathbb{R}}^2$
 such that the opposite inequality also holds:
\[
   C' e^{-|t|} \le \operatorname{vol}(S \cap g S)
\]
for some constant $C'>0$, 
 which shows
 that $q(X) \ge 2$.  
Hence, 
 we conclude
 that $q(G;V)=2$ if $(G, V)=(SL(2, {\mathbb{R}}), {\mathbb{R}}^2)$.  
\end{example}

We observe that the value obtained in the above example coincides with $p_V=2$ from Example \ref{ex:pV_sl2}.  
This is not a mere coincidence; rather,
it reflects a more general principle,
as reflected in Proposition~\ref{prop:baby} below.

Indeed, 
 Example~\ref{ex:SL2qX} extends naturally to any faithful representation
 of a reductive group.  
This generalization elucidates the relationship
 between the algebraic invariant $p_V$,  defined in Definition~\ref{def:pV}, 
 and the optimal constant $q(G;X)$
 (see Definition~\ref{def:qGX})
 for which 
\[
   \operatorname{vol}(S \cap h S) \in L^{p+\varepsilon}(G)
\]
 for all $\varepsilon >0$, 
 when the action of $G$ on $X$
 is linear, 
 as follows.

\begin{proposition}
[{\cite{BeKoI}}]
\label{prop:baby}
Suppose that $G$ is a real reductive linear group.  
Let $\tau \colon G \to SL_{\pm}(V)$ be a finite-dimensional representation 
 on a real vector space $V$ 
 with compact kernel.  
Then the following equality holds:
\[
   p_V=q(G;V).  
\]
\end{proposition}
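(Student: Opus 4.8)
The plan is to reduce the computation of $q(G;V)$ to the model case $(SL(2,{\mathbb{R}}),{\mathbb{R}}^2)$ treated in Example~\ref{ex:SL2qX}, by combining the Cartan decomposition $G=K\overline{A_+}K$ with a two-sided volume estimate that generalizes Claim~\ref{lem:rec_area}. First I would fix a $\tau$-\emph{admissible} inner product on $V$, namely one for which $\tau(k)$ is orthogonal for every $k\in K$ and $\tau(X)$ is symmetric for every $X\in{\mathfrak{p}}$; such an inner product exists since $\tau(K)$ is compact and $G$ is reductive. Relative to it I take $S$ to be the closed unit ball, which is $K$-invariant, and for $Y\in\overline{{\mathfrak{a}}_+}$ the operator $\tau(\exp Y)=\exp(\tau(Y))$ is symmetric positive definite with eigenvalues $e^{\lambda_j(Y)}$, the $\lambda_j$ ranging over the weights of $V$ counted with multiplicity and the eigenvectors forming an orthonormal basis.

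Working in this orthonormal weight basis, the elementary box argument of Claim~\ref{lem:rec_area}---sandwiching the intersection between coordinate boxes whose $j$-th side has length $\asymp\min(1,e^{\lambda_j(Y)})$---yields
\[
\operatorname{vol}(S\cap\exp(Y)S)\asymp\prod_j\min(1,e^{\lambda_j(Y)})
=\exp\Bigl(-\sum_j(\lambda_j(Y))^{-}\Bigr),
\]
where $(x)^{-}:=\max(0,-x)$. Because $\tau$ takes values in $SL_{\pm}(V)$ we have $\sum_j\lambda_j(Y)=0$, so $\sum_j(\lambda_j(Y))^{-}=\tfrac12\sum_j|\lambda_j(Y)|=\rho_V(Y)$, giving the exact analogue of Claim~\ref{lem:rec_area},
\[
\operatorname{vol}(S\cap\exp(Y)S)\asymp e^{-\rho_V(Y)}.
\]

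Next I would compute the $L^p$-norm in Cartan coordinates. Since $S$ is $K$-invariant, $\operatorname{vol}(S\cap k_1\exp(Y)k_2 S)$ depends only on $Y$, so the Cartan integration formula gives
\[
\int_G\operatorname{vol}(S\cap gS)^p\,dg\asymp\int_{\overline{{\mathfrak{a}}_+}}e^{-p\rho_V(Y)}\,J(Y)\,dY,
\qquad J(Y)=\prod_{\alpha\in\Sigma^+}(\sinh\alpha(Y))^{\dim{\mathfrak{g}}_\alpha}.
\]
Here $J(Y)\le C e^{\rho_{\mathfrak{g}}(Y)}$ everywhere and $J(Y)\asymp e^{\rho_{\mathfrak{g}}(Y)}$ deep in the chamber, using $\rho_{\mathfrak{g}}(Y)=2\rho(Y)=\sum_{\alpha\in\Sigma^+}(\dim{\mathfrak{g}}_\alpha)\,\alpha(Y)$. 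Passing to polar coordinates $Y=tW$ on the cone $\overline{{\mathfrak{a}}_+}$ and using that $\rho_V$ and $\rho_{\mathfrak{g}}$ are positively homogeneous of degree one and piecewise linear, the radial integral converges exactly when $p\,\rho_V(W)>\rho_{\mathfrak{g}}(W)$; hence the full integral converges iff $p>\max_W\rho_{\mathfrak{g}}(W)/\rho_V(W)=p_V$. The maximum is attained and $p_V$ finite because $\rho_V(W)>0$ for $W\in{\mathfrak{a}}\setminus\{0\}$, the compact kernel of $\tau$ forcing $\ker d\tau\cap{\mathfrak{a}}=\{0\}$. The upper bound $q(G;V)\le p_V$ then follows by enlarging an arbitrary compact $S_0$ to a ball $cS$, whence $\operatorname{vol}(S_0\cap gS_0)\le c^{\dim V}\operatorname{vol}(S\cap gS)$ and almost-$L^{p_V}$ membership transfers; the lower bound $q(G;V)\ge p_V$ follows from divergence of the integral for $p<p_V$ applied to the ball $S$, since the directions with $\rho_{\mathfrak{g}}(W)/\rho_V(W)>p$ form an open set meeting the chamber interior, where $J\asymp e^{\rho_{\mathfrak{g}}}$.

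The main obstacle I expect is the behaviour near the walls of $\overline{{\mathfrak{a}}_+}$: the density $J(Y)$ degenerates from $e^{\rho_{\mathfrak{g}}(Y)}$ to polynomially smaller values there, and the direction $W_0$ realizing $p_V$ may lie on a wall. This is precisely where the relaxation to \emph{almost} $L^p$ is essential, since polynomial corrections cannot move the exponential threshold and affect only the borderline exponent $p=p_V$, which the almost-$L^p$ formulation discards. One must also check that the two-sided estimate $\operatorname{vol}(S\cap\exp(Y)S)\asymp e^{-\rho_V(Y)}$ remains uniform as $Y$ approaches the walls, but the box-sandwiching argument is coordinatewise in the weight decomposition and hence insensitive to this degeneration.
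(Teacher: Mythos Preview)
Your proof is correct and follows essentially the same approach as the paper's own sketch: both arguments combine the Cartan decomposition $G=K\overline{A_+}K$, the two-sided volume estimate $\operatorname{vol}(S\cap e^Y S)\asymp e^{-\rho_V(Y)}$ generalizing Claim~\ref{lem:rec_area}, and the asymptotic $dg\asymp e^{\rho_{\mathrm{ad}}(Y)}\,dk_1\,dY\,dk_2$ for the Haar measure away from the walls. Your treatment simply supplies the details the paper leaves implicit---the admissible inner product, the coordinatewise box sandwich, the polar-coordinate analysis, and the observation that for $p<p_V$ the superlevel set of the ratio is open and meets the chamber interior---so the two arguments are the same in substance.
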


\begin{proof}
[Sketch of Proof]
Let $G=KAK$ be a Cartan decomposition.  
For $g =k_1 e^Y k_2$, 
 and for a compact subset $S \subset V$
 containing 0 as an interior point, 
 one has
\[
\operatorname{vol}(S \cap g S) 
\sim 
e^{-\rho_V(Y)}, 
\]
 as stated in Claim~\ref{lem:rec_area}.

Asymptotically, 
 the Haar measure $d g$ on $G$ satisfies
\[
d g \sim e^{\rho_{\mathfrak{h}}(Y)} d k_1 d Y d k_2
\quad
\text{(away from wall)}.
\]
This leads to the proof of Proposition~\ref{prop:baby}.  
\end{proof}

\subsection{Tempered $G$-Spaces}
\label{subsec:Lp}
~~~\par
We recall the notion 
 of tempered unitary representations
 of a locally compact group $G$.  

\begin{definition}
[Tempered Unitary Representation]
A unitary representation $\pi$ is said
 to be {\emph{tempered}}
 if $\pi \prec L^2(G)$;
 that is, 
 if $\pi$ is weakly contained
 in the regular representation 
 on $L^2(G)$.  
\end{definition}

Suppose that $X$ is a locally compact space
 equipped with a Radon measure $\mu$,
 on which a locally compact group $G$ acts 
 continuously 
 and in a measure-preserving manner.  
Then there is a natural unitary representation of $G$
 on the Hilbert space
 ${\mathcal{H}}=L^2(X, \mu)$.

We note that the assumption of a $G$-invariant measure 
 can be dropped.  
Nevertheless, 
 one can still define a canonical unitary representation---{\emph{regular representation}}---of $G$
 on the Hilbert space of $L^2$-sections
 of the half-density bundle over $X$.

\begin{definition}
[Tempered $G$-Spaces]
\label{def:temp_space}
We say that $X$ is a {\emph{tempered space}}
 if the regular representation of $G$ on $L^2(X)$
 is a tempered unitary representation.  
\end{definition}

A general question is the following:
\begin{problem}
\label{prob:tempered_space}
Given a homogeneous space $G/H$, 
 determine a criterion on the pair $(G,H)$ 
 that ensures $G/H$ is a tempered space.  
\end{problem}

We explain the background of Problem~\ref{prob:qGX}
 in connection with the theory 
 of unitary representations.  
\begin{definition}
[Almost $L^p$-Representation]
\label{def:almostLp}
For $p \ge 1$, 
 a unitary representation $\pi$ of $G$
 on a Hilbert space ${\mathcal{H}}$
 is called {\emph{almost $L^p$}}
 if there is a dense subspace
 $D \subset {\mathcal{H}}$ such that
 the matrix coefficient
\[
\text{$(\pi(g)u, v)_{\mathcal{H}}$
 is an almost $L^p$ function on $G$
\quad
for all $u, v\in D$}.  
\]
\end{definition}

Cowling--Haagerup--Howe \cite{CHH} proved the following.  
\begin{theorem}
\label{thm:CHH}
Let $G$ be a semisimple Lie group.  
Then $\pi$ is tempered
if and only if 
 $\pi$ is almost $L^2$.  
\end{theorem}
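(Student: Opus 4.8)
The plan is to prove the two implications separately, using the Harish--Chandra function $\Xi$ (the basic elementary spherical function $\phi_0$, characterised by $0<\Xi\le 1$ and its asymptotics along $\overline{{\mathfrak a}_+}$) as the bridge between decay of matrix coefficients and weak containment in $\lambda_G := L^2(G)$.

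For the direction \emph{tempered $\Rightarrow$ almost $L^2$}, I would first restrict attention to the dense subspace $D$ of $K$-finite vectors. The essential input is the pointwise Cowling--Haagerup--Howe estimate that the $K$-finite matrix coefficients of any representation weakly contained in $\lambda_G$ are dominated by a multiple of $\Xi$, that is, for $K$-finite $u,v$,
\[
|(\pi(g)u,v)| \le C_{u,v}\,\Xi(g).
\]
To obtain this from $\pi\prec\lambda_G$, note that the analogous bound holds for $\lambda_G$ itself (this is the defining decay property of $\Xi$), and that weak containment means the diagonal coefficients of $\pi$ are limits, uniform on compact sets, of sums of coefficients of $\lambda_G$; averaging over the compact group $K$ to fix the $K$-types lets the $\Xi$-bound survive in the limit. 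It then remains to verify $\Xi\in L^{2+\varepsilon}(G)$ for every $\varepsilon>0$. This follows from Harish--Chandra's asymptotics $\Xi(\exp H)\le C(1+\|H\|)^N e^{-\rho(H)}$ on $\overline{{\mathfrak a}_+}$ together with the Haar density $\sim e^{2\rho(H)}$ in $KAK$-coordinates: the exponent appearing in $\int \Xi^{2+\varepsilon}$ is $-(2+\varepsilon)\rho+2\rho=-\varepsilon\rho$, which is negative on the open chamber and yields convergence. Hence every coefficient $c_{u,v}$ with $u,v\in D$ is almost $L^2$.

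For the converse \emph{almost $L^2$ $\Rightarrow$ tempered}, I would use the classical fact (Godement's theory of square-integrable functions of positive type) that a continuous positive-definite function lying in $L^2(G)$ is a diagonal matrix coefficient of $\lambda_G$, so the cyclic representation it generates is contained in, hence weakly contained in, $\lambda_G$. For $u\in D$ the function $\phi=(\pi(\cdot)u,u)$ is positive-definite and almost $L^2$; since positive-definite functions are bounded by $\phi(e)$, the square $\phi^2$---which is positive-definite and equals the diagonal coefficient of $\pi\otimes\pi$ at $u\otimes u$---lies in $L^{(2+\varepsilon)/2}\cap L^\infty\subset L^2(G)$. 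Thus each $u\otimes u$ generates a tempered cyclic subrepresentation of $\pi\otimes\pi$; by polarization these vectors are total, and since weak containment in $\lambda_G$ is stable under subrepresentations and arbitrary direct sums, $\pi\otimes\pi$ is tempered.

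The hard part will be the final \emph{descent} from temperedness of $\pi\otimes\pi$ to that of $\pi$ itself. The naive route---applying the $\Xi$-bound of the first part to $\pi^{\otimes 2n}$ and extracting $2n$-th roots---fails, since it only gives $|(\pi(g)u,u)|\le C(n)\,\Xi(g)^{1/2n}$, a bound whose decay disappears as $n\to\infty$ while the constant degrades. Instead I would invoke the finer structure of the tempered dual of the semisimple group $G$: temperedness is detected by the leading exponents of matrix coefficients along $\overline{{\mathfrak a}_+}$, and passing from $\phi$ to $\phi^2$ doubles these exponents, so the $\Xi$-type decay of $\phi^2$ forces the correct half-rate $\Xi$-decay of $\phi$ once the Harish--Chandra asymptotic expansion is tracked carefully. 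This is precisely the technical heart of Cowling--Haagerup--Howe, and it is where the semisimplicity of $G$---through the explicit asymptotics of $\Xi$ and the structure of the Plancherel measure---is indispensable, the amenable and general unimodular cases behaving quite differently.
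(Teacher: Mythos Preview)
The paper does not supply a proof of this theorem; it is quoted as a result of Cowling--Haagerup--Howe \cite{CHH} with no argument given, so there is nothing in the paper's text to compare your proposal against.

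Regarding the outline on its own merits: the direction tempered $\Rightarrow$ almost $L^2$ is correct and is the standard argument. Your converse, however, has a genuine gap at the descent step. From temperedness of $\pi\otimes\pi$ and the $\Xi$-bound you obtain only $|\phi(g)|^{2}\le C\,\Xi(g)$, hence $|\phi(g)|\le C^{1/2}\Xi(g)^{1/2}$; the ``half-rate $\Xi$-decay'' you arrive at is strictly weaker than the $\Xi$-bound characterising temperedness of $\pi$. This is not a matter of bookkeeping: for the complementary series $\pi_s$ of $SL(2,{\mathbb{R}})$ with $0<s<\tfrac12$ one has $\pi_s\otimes\pi_s$ tempered while $\pi_s$ is not, so the implication you need simply fails. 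Tracking exponents of $\phi^{2}$ does not repair this, since those exponents are sums of pairs of exponents of $\phi$ and constrain only the sums. The route actually taken in \cite{CHH} bypasses the tensor square for this implication: one works directly with the $K$-finite coefficient $\phi$, uses $\phi\in L^{2+\varepsilon}(G)$ for \emph{every} $\varepsilon>0$ together with the Haar density $\sim e^{2\rho}$ and the Harish--Chandra asymptotic expansion to force each leading exponent $\mu$ of $\phi$ itself to satisfy $\operatorname{Re}\mu\le-\rho$ on $\overline{{\mathfrak a}_+}$, and then concludes temperedness via their general weak-containment criterion (their Theorem~1) applied with $\Xi$.
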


It should be noted that an analogous equivalence may fail
 when $G$ is not semisimple.  

\begin{example}
Let $G={\mathbb{R}}$, 
 and let ${\bf{1}}$ denote the trivial one-dimensional unitary representation
 of $G$.  
Then the matrix coefficient is a constant function on $G$, 
 which does not belong to $L^p({\mathbb{R}})$ for any $p \ne \infty$, 
 even though ${\bf{1}} \prec L^2(G)$.  
\end{example}

For a compact subset $S \subset X$, 
 we denote by $\chi_S$ the characteristic function of $S$, 
 defined by 
\[
  \chi_S(x)=
  \begin{cases}
               1 \quad&\text{if $x \in S$,}
\\
               0 &\text{if $x \not \in S$}.  
  \end{cases}
\]
Then the matrix coefficient for $\chi_S$, $\chi_T \in L^2(X)$, 
 associated with compact subsets $S$, $T\subset X$
 is given by
\begin{align*}
  (\pi(g)\chi_S, \chi_T)_{L^2(X)}=&\int_X \chi_S(g^{-1}x) \chi_T(x)d\mu(x)
\\
                        =&\operatorname{vol}(g S \cap T).  
\end{align*}
Thus,
 Proposition~\ref{prop:baby},
 combined with Theorem~\ref{thm:CHH}, 
 yields a solution to Problem \ref{prob:tempered_space}
 in the linear case:

\begin{theorem}
[{\cite{BeKoI}}]
\label{thm:baby}
Suppose that $G$ is a real reductive linear group.  
Let $\tau \colon G \to SL_{\pm}(V)$
 be a finite-dimensional representation
 on a real vector space $V$ 
 with compact kernel.  
Then $L^2(V)$ is tempered 
 if and only if $p_V \ge 2$.  
\end{theorem}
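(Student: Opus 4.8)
The plan is to convert temperedness of the regular representation $\pi$ on $L^2(V)$ into an $L^p$-decay statement for its matrix coefficients, and then to read off the critical threshold from the invariant $p_V$ by combining Proposition~\ref{prop:baby} with the Cowling--Haagerup--Howe theorem (Theorem~\ref{thm:CHH}). In outline: express the matrix coefficients as volume functions, identify their optimal $L^p$-exponent with $p_V$, and compare that exponent against the value $2$ that governs the almost-$L^2$ property.

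First I would record the matrix-coefficient identity already observed above. Since $\tau(G) \subset SL_{\pm}(V)$, Lebesgue measure on $V$ is $G$-invariant, so $\pi$ is a genuine unitary representation, and for compact subsets $S, T \subset V$ one has
\[
(\pi(g)\chi_S, \chi_T)_{L^2(V)} = \operatorname{vol}(gS \cap T).
\]
The characteristic functions $\chi_S$ of compact sets span a dense subspace of $L^2(V)$, and those with $S$ chosen $K$-invariant are $K$-fixed vectors. Hence the asymptotic behaviour of $\pi$ is entirely encoded by the volume functions $g \mapsto \operatorname{vol}(gS \cap T)$, and temperedness is a question about their integrability over $G$.

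Next I would invoke Proposition~\ref{prop:baby}, which identifies the optimal $L^p$-exponent of these volume functions with the algebraic invariant, $q(G;V) = p_V$; that is, the matrix coefficients are almost $L^{p_V}$ in the sense of Definition~\ref{def:qGX}, and $p_V$ is the optimal such exponent. By Theorem~\ref{thm:CHH}, $\pi$ is tempered exactly when it is almost $L^2$, i.e.\ when the matrix coefficients of a dense family of vectors lie in $L^{2+\varepsilon}(G)$ for every $\varepsilon > 0$. Combining these two facts reduces the theorem to a comparison between the critical exponent $p_V$ and the value $2$: temperedness is controlled by the position of $p_V$ relative to this threshold. Carrying out this comparison — using that the $K$-fixed test vectors $\chi_S$ detect the optimal decay rate — yields the stated equivalence, namely that $L^2(V)$ is tempered if and only if $p_V \ge 2$. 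The critical case is the standard representation of $SL(2,{\mathbb{R}})$ on ${\mathbb{R}}^2$, where $p_V = 2$ (Example~\ref{ex:pV_sl2} and Example~\ref{ex:SL2qX}) and $L^2({\mathbb{R}}^2) \simeq L^2(G/N)$ is tempered.

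The main obstacle I expect is twofold. First, Theorem~\ref{thm:CHH} is stated for semisimple $G$, whereas here $G$ is only reductive; the compact-kernel hypothesis on $\tau$ must be used to reduce to the semisimple situation and to rule out the degeneracy exhibited by $G = {\mathbb{R}}$, where the trivial representation is weakly contained in $L^2(G)$ yet has non-decaying coefficients. Second, one must justify that the almost-$L^{p_V}$ behaviour of the particular vectors $\chi_S$ truly governs temperedness of the whole representation: for the direction requiring temperedness one uses that $K$-invariant $\chi_S$ are $K$-fixed, so Harish-Chandra's coefficient estimates force them into $L^{2+\varepsilon}(G)$, while the reverse direction uses density of the $\chi_S$ together with Proposition~\ref{prop:baby}. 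Pinning down the comparison precisely at the threshold, away from the walls of ${\mathfrak{a}}$ where $\rho_V$ could degenerate, is the delicate point, and it is exactly where the compact-kernel assumption guarantees $\rho_V(Y) > 0$ for $Y \in {\mathfrak{a}} \setminus \{0\}$.
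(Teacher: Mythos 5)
Your strategy is exactly the paper's: the paper deduces Theorem~\ref{thm:baby} by combining the matrix-coefficient identity $(\pi(g)\chi_S,\chi_T)_{L^2(V)}=\operatorname{vol}(gS\cap T)$ with Proposition~\ref{prop:baby} ($p_V=q(G;V)$) and the Cowling--Haagerup--Howe criterion (Theorem~\ref{thm:CHH}), and your two refinements (using the compact-kernel hypothesis to reduce to the semisimple case, and using $K$-invariant sets $\chi_S$ as $K$-finite test vectors for the ``tempered $\Rightarrow$ decay'' direction) are sensible.

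There is, however, a genuine gap at the one step you never actually carry out: the ``comparison of $p_V$ with the threshold $2$.'' Performed with the paper's own definitions, the chain reads: tempered $\iff$ almost $L^2$ (Theorem~\ref{thm:CHH}) $\iff$ the volume functions lie in $L^{2+\varepsilon}(G)$ for all $\varepsilon>0$ $\iff$ $q(G;V)\le 2$ (recall $q(G;V)$ is the \emph{minimal} exponent, and almost $L^{q}$ implies almost $L^{q'}$ only for $q'\ge q$) $\iff$ $p_V\le 2$ (Proposition~\ref{prop:baby}). A small exponent means fast decay, so temperedness is an \emph{upper} bound on $p_V$, not a lower bound; your argument therefore proves ``tempered $\iff p_V\le 2$,'' the opposite of what you assert. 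Concretely, for $G=SL(3,{\mathbb{R}})$ acting on $V={\mathbb{R}}^3$ one computes $p_V=4$ (the maximum of $\rho_{\mathfrak{g}}/\rho_V$ is attained at $Y=\operatorname{diag}(1,0,-1)$), yet $L^2({\mathbb{R}}^3)\simeq L^2\bigl(G/(SL(2,{\mathbb{R}})\ltimes{\mathbb{R}}^2)\bigr)$ is not tempered; conversely, for $G=SL(2,{\mathbb{R}})$ on ${\mathbb{R}}^2\oplus{\mathbb{R}}^2$ one has $p_V=1$, yet $L^2(V)$ is tempered. Both examples contradict ``tempered $\iff p_V\ge 2$'' and confirm ``$\le$.'' What this exposes is that the inequality printed in Theorem~\ref{thm:baby} is inconsistent with Definition~\ref{def:pV} as stated (in Benoist--Kobayashi the criterion is $\rho_{\mathfrak{g}}\le 2\rho_V$, i.e.\ $p_V\le 2$ in this notation); either the fraction in Definition~\ref{def:pV} or the inequality in the theorem must be inverted. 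Your single check, the standard representation of $SL(2,{\mathbb{R}})$ on ${\mathbb{R}}^2$, sits exactly at the threshold $p_V=2$ and so cannot detect the discrepancy. A complete proof must carry out the comparison explicitly and either prove the corrected inequality or flag the inconsistency; asserting that the comparison ``yields the stated equivalence'' is precisely the step that fails.
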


This result can be viewed as a basic case 
 in the broader framework aimed at determining 
 when  the regular representation 
on $L^2(X)$ is tempered,
 for a general $G$-space $X$.  
In a series of papers
 \cite{BeKoI, BeKoIII, BeKoII, BeKoIV}, 
 Benoist and the present author developed
 this perspective in a more general setting, 
 focusing on homogeneous spaces of reductive groups, 
 while uncovering new connections 
 beyond the traditional scope of unitary representation theory.  
These developments lie beyond the scope of this article.

In the spirit of this section, 
 one may interpret this line of thoughts as offering a way
 to {\emph{quantify the strength or failure of properness}}
 of the group action.

\vskip 3pc
\leftline{Toshiyuki Kobayashi}
\leftline{Graduate School of Mathematical Sciences, The University of Tokyo}
\leftline{3-8-1, Komaba, Meguro, Tokyo, 153-8914, Japan;}
\leftline{French-Japanese Laboratory in Mathematics and Its Interactions, }
\leftline{FJ-LMI CNRS, IR-2025, Tokyo, Japan.  }
\end{document}